\documentclass[12 pt]{amsart}
\usepackage{thmtools}
\usepackage{
	amsmath,  amssymb,  amsthm,   amscd,
	gensymb,  graphicx, etoolbox, booktabs,
	stackrel, mathtools    
}
\usepackage[usenames,dvipsnames]{xcolor}
\usepackage{hyperref}
\usepackage{mathrsfs}
\usepackage{graphicx}
\usepackage{enumitem}
\usepackage{romannum}
\usepackage{mathtools}
\usepackage[utf8]{inputenc}
\usepackage{listings}
\usepackage[capitalise]{cleveref}
\usepackage{placeins}
\usepackage{lipsum,eso-pic,xcolor}
\usepackage{lineno}

\usepackage{comment}

\usepackage{tikz}
\usetikzlibrary{arrows,decorations.pathmorphing,backgrounds,positioning,fit}
\usetikzlibrary{positioning}
\hypersetup{
    colorlinks=true,
    linkcolor=blue,
    hypertexnames=false,
}
\usetikzlibrary{trees}
\usetikzlibrary{arrows}

\def\supp{\operatorname{supp}}

\def\reg{\operatorname{reg}}
\def\deg{\operatorname{deg}}

\def\max{\operatorname{max}}

\DeclareMathOperator{\Ass}{Ass}
\def\inn{\operatorname{in_<}}

\newcommand{\leaf}{\textup{Leaf}}
\newcommand{\vpi}{\textup{v}_{\pp^{+}(G)}(\II_{G})}

\newcommand{\size}[1]{\left| #1 \right|} 

\newcommand{\ZZ}{\mathbb{Z}}

\newcommand{\II}{\mathcal{I}}
\newcommand{\JJ}{\mathcal{J}}
\newcommand{\dng}{\mathcal{D}_{n}(G)}

\newcommand{\iv}{\textup{iv}}
\newcommand{\mm}{\mathfrak{m}}
\newcommand{\pp}{\mathfrak{p}}
\newcommand{\qq}{\mathfrak{q}}
\newcommand{\RR}{\mathbb{R}}

\newcommand{\bI}{\mathbb{I}}
\newcommand{\VV}{\mathbb{V}}

\newcommand{\vpp}{\vn_{\pp}(I)}

\newcommand{\vd}{v_{d}(G)}
\newcommand{\+}{\pp^{+}(G)}
\newcommand{\GG}{\mathcal{G}}

\newcommand{\jkm}{J_{K_{m},G}}
\newcommand{\MG}{\mathcal{M}(G)}
\newcommand{\TT}{\mathcal{T}}

\newcommand{\pe}{\pp_{\emptyset}(G)}
\newcommand{\vp}{\textup{v}_{\pp_{\emptyset}(G)}(L_G^{\mathbb{K}}(d))}

\newcommand{\KK}{\mathbb{K}}
\newcommand{\lcm}{\mathrm{lcm}}

\newcommand{\OR}{\textup{OR}_{d}(G)}
\newcommand{\vn}{\mathrm{v}}
\newcommand{\dcng}{\mathcal{D}_{c,n}(G)}
\newcommand\restr[2]{{
   \left.\kern-\nulldelimiterspace 
   #1 
   \vphantom{\small|} 
   \right|_{#2} 
   }}

\newcommand{\vf}{\mathbf{v}}
\newcommand{\ini}{\textup{in}_{\prec}}
\newcommand{\vg}{V_{\geq d}(G)}
\newcommand{\vlg}{\mathrm{v}_{I_{K_{n}}}(L_{G}^{\mathbb{R}}(2))}
\newcommand*\closure[1]{\overline{#1}}

\newtheorem{proposition}{Proposition}[section]
\newtheorem{lemma}[proposition]{Lemma}
\newtheorem{corollary}[proposition]{Corollary}
\newtheorem{theorem}[proposition]{Theorem}
\newtheorem{definition}[proposition]{Definition}
\newtheorem{remark}[proposition]{Remark}
\newtheorem{notation}[proposition]{Notation}
\newtheorem{example}[proposition]{Example}
\newtheorem{question}[theorem]{Question}

\advance\headheight1.15pt

\newtheorem{innercustomthm}{Theorem}
\newenvironment{customthm}[1]
  {\renewcommand\theinnercustomthm{#1}\innercustomthm\itshape}
  {\endinnercustomthm}

\begin{document}

	\pagenumbering{arabic}
	
\title[]{V-number of Lovász--Saks--Schrijver Ideals and (parity) binomial edge ideals of graphs} 
	
	\author[Manohar Kumar]{Manohar Kumar}
	\address{Department of Mathematics, Indian Institute of Technology Madras, Chennai, INDIA - 600036.}
	\email{manhar349@gmail.com}

	\author[Emiliano Liwski]{Emiliano Liwski}
	\address{Department of Mathematics, KU Leuven, Celestijnenlaan 200B, 3001 Leuven, Belgium.}
	\email{emiliano.liwski@kuleuven.be}
	
	
	\thanks{AMS Classification 2020: 13F20, 05E40, 05E99, 13D45}

	\maketitle
	
	\begin{abstract}
In this paper, we introduce a new framework for computing certain localized $\vn$-numbers of a class of ideals called coordinate-saturated ideals, which includes certain classes of Lovász--Saks--Schrijver (LSS) ideals and (generalized)
binomial edge ideals associated with graphs. For a forest graph $G$, we derive an explicit formula for the localized $\vn$-number of the LSS ideal $L_G^{\mathbb{K}}(d)$, denoted by $\vp$, for all $d \geq 2$, where $\mathbb{K}$ is an algebraically closed field. As a consequence, we prove that
$\vn(L_G^{\mathbb{K}}(d)) \leq \reg(R/L_G^{\mathbb{K}}(d)),$
where $\vn(L_G^{\mathbb{K}}(d))$ and $\reg(R/L_G^{\mathbb{K}}(d))$ denote the $\vn$-number of $L_G^{\mathbb{K}}(d)$ and the Castelnuovo--Mumford regularity of $R/L_G^{\mathbb{K}}(d)$, respectively. Also, we give an upper bound for $\vlg$, where $\mathbb{R}$ is field of real numbers. Furthermore, we provide combinatorial descriptions of the localized $\vn$-number of parity binomial edge ideals, denoted by $\vpi$, and, as an application, show that $\vn(\II_G) \leq \reg(R/\II_G)$ for several classes of non-bipartite graphs. Finally, we prove that $\vn(J_G^k)\leq \reg(R/J_G^k)$ for all powers of binomial edge ideals of closed graphs $G$.
\end{abstract}

\section{Introduction}
Let $R = \mathbb{K}[x_1, \ldots, x_n] = \bigoplus_{d = 0}^{\infty} R_d$ be a standard graded polynomial ring over a field $\mathbb{K}$, and let $I \subset R$ be a graded ideal. It is well-known that for every $\mathfrak{p} \in \Ass(I)$, there exists a homogeneous element $f \in R$ such that $\mathfrak{p} = I : f.$ The \textit{$\vn$-number} of $I$ is defined by
\[\vn(I) = \min \left\{ d \ge 0 : \exists \, f \in R_d \text{ and } \mathfrak{p} \in \Ass(I) \text{ such that } I : f = \mathfrak{p} \right\}.\]
Moreover, for each $\mathfrak{p} \in \Ass(I)$, the \textit{local $\vn$-number} of $I$ at $\mathfrak{p}$ is defined by
\[\vn_{\mathfrak{p}}(I) = \min \left\{ d \ge 0 : \exists\, f \in R_d \text{ such that } I : f = \mathfrak{p} \right\}.\]
The $\vn$-number was introduced in \cite{cstpv20} in the context of projective Reed--Muller-type codes. It is also related to classical geometric invariants; for instance, the degree of a truncator of a finite set of projective points \cite{gkr93} can be interpreted as a local $\vn$-number. For developments in this direction, we refer the reader to \cite{bms24,c24,fs25,JARAMILLO2021,jls26}.

The goal of this paper is to study the $\vn$-number of Lov\'{a}sz--Saks--Schrijver ideals, parity binomial edge ideals, and generalized binomial edge ideals. In \Cref{sec:prelim}, we recall the necessary definitions, basic notions, used throughout the paper. In \Cref{new framework}, we introduce a new framework to compute localized $\vn$-number, introducing the notion of {\em coordinate-saturated ideals}; see \Cref{def: coordinate-saturated}. We prove the following theorem; for the definitions of $M$ and $\gamma(I)$ see \Cref{ideal M and gamma}.

\begin{customthm}{\ref{thm: main theorem coordinate-saturated ideals}}
Let $I\subset \KK[x_1,\ldots,x_n]$ be a coordinate-saturated ideal, and suppose that $I$ has a unique minimal prime containing no indeterminates. Then this prime is necessarily
 \[
\pp=\Big(I:\prod_{i\in [n]}x_i \Big).
\]
Moreover, $\vpp\leq \gamma(I),$ with equality whenever $I+M$ is radical. In particular, the equality holds whenever $I$ is binomial.
\end{customthm}

We show that coordinate-saturated ideals encompass the Lovász--Saks--Schrijver ideals of forests over algebraically closed fields and of arbitrary graphs over $\RR$, as well as 
generalized binomial edge ideals of graphs. As an application, we compute certain localized $\vn$-numbers for these classes of ideals.

Let $G$ be a simple graph on the vertex set $V(G)=[n]:=\{1,\ldots,n\}$ and edge set $E(G)$. Let $d$ be a positive integer. Consider the $n\times d$ matrix of indeterminates $X=(x_{i,j})$, and let $R=\KK[x_{ij}: i \in [n], j \in [d]]$ denote the corresponding polynomial ring. For each edge $e=\{i,j\}\in E(G)$, define $f_{e}^{(d)} := \sum_{k=1}^{d} x_{i,k} x_{j,k}.$ The \emph{Lov\'{a}sz--Saks--Schrijver ideal} (or \emph{LSS ideal}) of $G$ in dimension $d$ with respect to $\KK$ is
\begin{equation}\label{fe}
L_{G}^{\KK}(d) = (f_{e}^{(d)}: e\in E(G))\subset R=\KK[x_{ij}: i \in [n], j \in [d]].
\end{equation}
We denote by $\OR := \mathbb{V}(L_G^{\KK}(d)) \subseteq \KK^{nd}$ the variety associated with $L_G^{\KK}(d)$. Its points are precisely the $n$-tuples $(v_1,\ldots,v_n)\in (\KK^d)^n$ satisfying the orthogonality relations $v_i\cdot v_j = 0$ for every edge $\{i,j\}\in E$. We refer the reader to \cite{cw19, hmsw15} for a detailed study of LSS ideals. Herzog et al.~\cite{hmsw15} proved that if $\operatorname{char}(\KK)\neq 2$, then $L_G^{\KK}(d)$ is radical. They also determined the primary decomposition of $L_G^{\KK}(d)$ when $\sqrt{-1}\notin \KK$ and $\operatorname{char}(\KK)\neq 2$. In \Cref{LSS ideals-forest}, we prove the following  theorem:
\begin{customthm}{\ref{thm:local v-number LSS ideals}}
Let $G$ be a forest graph, and let $d \geq 3$. Then the localized $\vn$-number $\vp$ coincides with 
the number of vertices of $G$ of degree at least $d$.
\end{customthm}
Using \Cref{thm:local v-number LSS ideals}, for any forest graphs we show the following theorem:
\begin{customthm}{\ref{thm:v-number-regularity}}
Let $G$ be a forest graph, and let $d \geq 1$. Then $\vn(L_G^{\mathbb{K}}(d)) \leq \reg(R/L_G^{\mathbb{K}}(d))$.
\end{customthm}

In \Cref{LSS ideals real field}, we establish an upper bound for $\vlg$ in the following theorem. 
\begin{customthm}{\ref{LSS-over-real}}
Let $G$ be a simple graph on $[n]$. Then \[
\begin{aligned}
\vlg \leq \gamma_{c,n}(G)
=
\min\{\,&|T| :\; T\subseteq [n]\text{ is dominant,}\\
& G[T]\text{ is connected and non-bipartite}\,\}.
\end{aligned}
\]
\end{customthm}
Another important family of ideals is the \emph{parity binomial edge ideal} of $G$, introduced by Kahle et al.~\cite{kst16}, which is defined as
\[
\II_G=\big(x_ix_j-y_iy_j:\{i,j\}\in E(G)\big)\subseteq R=\KK[x_1,\ldots,x_n,y_1,\ldots,y_n].
\]
Several algebraic properties of parity binomial edge ideals, including primary decomposition, mesoprimary decomposition, Markov bases, and radicality, were studied in \cite{kst16}. Furthermore, in \cite{cj26}, the authors investigated the $\vn$-number of permanental ideals. Herzog et al. \cite{hmsw15} introduced the notion of permanental edge ideals associated to arbitrary graphs,
\[
\Pi_G = \big(x_i y_j + x_j y_i : \{i,j\} \in E(G)\big) \subseteq \KK[x_1, \dots, x_n, y_1, \dots, y_n].
\]
Observe that, for the complete graph $K_n$, the permanental edge ideal
\[
\Pi_{K_n} = \big(x_i y_j + x_j y_i : 1 \le i < j \le n\big)
\]
coincides with the ideal generated by all $2 \times 2$ permanents of the generic $2 \times n$ matrix. Also, when $\operatorname{char}(\KK) \neq 2$, the ideal $\Pi_G$ is essentially equivalent to the parity binomial edge ideal $\II_G$ (see \cite[Remark 3.3]{zbMATH07352276}). Consequently, $\vn(\Pi_G) = \vn(\II_G).$ Bolognini et al.~\cite[Corollary 6.2]{bms18} proved that if $G$ is bipartite, then the parity binomial edge ideal $\II_G$ is essentially equivalent to $L_G^{\mathbb{K}}(2)$. This connection naturally motivates the study of the $\vn$-number and other algebraic invariants of parity binomial edge ideals. In \Cref{parity binomial edge ideals}, we prove the following result for $G$ a simple graph which is connected and non-bipartite; see \Cref{def: q-} for the definition of $\nu_G$.

\begin{customthm}{\ref{thm: vpi equals nuG}}
The localized $\vn$-number $\vpi$ of $\II_{G}$ at the minimal prime $\+$ coincides with the combinatorial invariant $\nu_{G}$. In other words, $\vpi=\nu_{G}.$ 
\end{customthm}

In \Cref{explicit formula for parity binomial edge ideals}, we provide an explicit formula for $\vpi$ in the following theorem:

\begin{customthm}{\ref{thm: vpi equals yng}}
Let $G$ be a non-bipartite simple graph. Then
\[\vpi=\gamma_{n}(G)=\min \{\size{S}: \text{$S$ is a dominating set and $G[S]$ is totally non-bipartite}\},\]
where a graph is totally non-bipartite if all its connected components are non-bipartite.
\end{customthm}
 Furthermore in \Cref{v-number versus regularity for parity binomial edge ideals}, we introduce a class of graphs called decorated tree; (see Definition~\ref{def: decorated tree}) and using \Cref{thm: vpi equals yng} we prove the following theorem:
\begin{customthm}{\ref{thm:decorated-tree}}
Let $G$ be a decorated tree. Then $\vn(\II_G) \leq \reg(R/\II_G).$
\end{customthm}
 
Also, we prove that $\vn(\II_G) \leq \reg(R/\II_G)$ for several classes of connected non-bipartite graphs; see Propositions \ref{prop:path-triangle}, \ref{prop:chord-even-cycle}, \ref{prop:chord-odd-cycle}.

We also focus on the family of generalized binomial edge ideals, introduced by Rauh in~\cite{Rauh13}. Let $m,n\geq 2$, and let $R=\KK[x_{i,j}:i\in [m],,j\in [n]].$
The \emph{generalized binomial edge ideal} of the pair $(K_m,G)$ is defined by
\begin{equation}\label{def: generalized binomial edge ideals}
  J_{K_m,G}=(x_{i,k}x_{j,\ell}-x_{i,\ell}x_{j,k}
: i,j\in[m],\ \{k,\ell\}\in E(G)).  
\end{equation}

Observe that when $m=2$, the generalized binomial edge ideal $J_{K_2,G}$ coincides with the binomial edge ideal of $G$, denoted by $J_G$. Indeed, after identifying $x_{1,j}=x_j$ and $x_{2,j}=y_j$ for each $j\in[n]$, we obtain
\[
J_{K_2,G}= \big(x_i y_j-x_j y_i : i<j,\ {i,j}\in E(G)\big) =: J_G
\subseteq
R=\KK[x_1,\dots,x_n,y_1,\dots,y_n],
\]
which is precisely the \emph{binomial edge ideal} introduced independently by Herzog et al.~\cite{herzog2010binomial} and Ohtani~\cite{ohtani2011graphs}. Recall from the above discussion that Bolognini et al.~\cite[Corollary 6.2]{bms18} proved that, if $G$ is bipartite, then the parity binomial edge ideal $\II_G$ is essentially equivalent to $L_G^{\mathbb{K}}(2)$. Moreover, it is also equivalent to the binomial edge ideal of $G$. In view of this equivalence, it is worth noting that the $\vn$-number of binomial edge ideals has been studied extensively; see \cite{ass24, dey2024v, jaramillo2024connected, liwski2025v}. In \Cref{generalized binomial edge ideals}, we prove that  
\[\vn(J_{G})\geq \vn(J_{K_{3},G})\geq \vn(J_{K_4, G})\geq \cdots \cdots; \text{ see Theorem }~\ref{thm: monotonicity}.\]

In \cite[Theorem 3.3]{sz26}, authors compute localized $\vn$-number of generalized binomial edge ideals. In \Cref{thm:generalized v-number}, we give a different proof for \cite[Theorem 3.3]{sz26} by showing generalized binomial edge ideals are coordinate-saturated ideals. Finally in \Cref{closed graphs}, we prove that for any closed graphs, $\vn(J_G^k) \leq \reg(R/J_G^k)$ for all $k \geq 1$; see \Cref{proppowers}. %

\section{Preliminaries}\label{sec:prelim}

In this section, we recall the necessary preliminaries and notation used throughout the paper.

Throughout the paper, let $G$ denote a connected simple graph with vertex set $V(G)=[n]$ and edge set $E(G)$. For a vertex $x\in V(G)$, the \textit{neighbourhood} of $x$ in $G$, denoted by $N_{G}(x)$, is defined as $N_{G}(x)= \{y \in V(G) \mid \{x,y\} \in E(G)\}$. For a subset $A\subseteq V(G)$, we denote the \textit{induced} subgraph of $G$ on $A$ by $G[A]$, in particular, $G[A]$ is a graph with $V(G[A])=A$ and $E(G[A])=\{e\in E(G)\mid e\subseteq A\}$. Again, by $G\setminus A$, we mean the induced subgraph $G[V(G)\setminus A]$, which we will also often denote as $G_A$. For simplicity of notation, we write $G\setminus v$ to denote the graph $G\setminus \{v\}$ for any vertex $v \in V(G)$. 

A cycle of length \(n\), denoted by $C_n$, is a graph with vertex set
$V(C_n)=\{v_1,\ldots,v_n\}$ and edge set $E(C_n)
=\bigl\{\{v_i,v_{i+1}\}:1\le i\le n-1\bigr\}\cup \bigl\{\{v_1,v_n\}\bigr\}.$
A graph \(G\) is said to be \emph{complete} if every pair of distinct vertices of \(G\) is joined by an edge. The complete graph on \(n\) vertices is denoted by \(K_n\).

The \emph{Castelnuovo--Mumford regularity} (in short, \emph{regularity}) of $R/I$ is defined by
\[
\reg(R/I) = \max \{j - i : \beta_{i, j}(R/I) \ne 0\},
\]
where $\beta_{i, j}(I)$ denotes the $(i, j)$-th graded Betti number of a graded ideal $I \subset R=\mathbb{K}[x_1,\ldots,x_n]$.

\section{A new framework for computing localized v-numbers}\label{new framework}

In this section, we introduce a new method for computing localized v-numbers of a class of ideals, which we call {\em coordinate-saturated ideals}. In the subsequent sections, we apply the results of this section and compute some specific localized v-numbers for some classic families of ideals, such as LSS ideals, parity binomial edge ideals and generalized binomial edge ideals. 
We will first prove some auxiliary results which will be useful throughout the paper. We denote $\KK[\mathbf{x}]=\KK[x_{1},\ldots,x_{n}]$.


\begin{proposition}\label{prop: I+M is radical}
Let $I,M\subseteq \mathbb{K}[\mathbf{x}]$ be radical ideals, where $I$ is
binomial and $M$ is monomial. Assume that every monomial contained in $I+M$ belongs to $M$. Then $I+M$ is radical.
\end{proposition}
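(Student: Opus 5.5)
The plan is to pass to the quotient by $I$, where the hypothesis "every monomial of $I+M$ lies in $M$" lets us control how $M$ interacts with the binomial relations. Write $J=I+M$. Since monomials are binomials, $J$ is itself a binomial ideal. Its monomials are exactly those of $M$, which by hypothesis is squarefree.

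\textbf{Step 1 (the congruence picture).} Recall that a binomial ideal $J$ determines an equivalence relation $\sim_J$ on monomials: $u\sim_J v$ iff $u-\lambda v\in J$ for some $\lambda\neq 0$. Moreover, $\KK[\mathbf{x}]/J$ is spanned by the classes of monomials not in $J$, and a polynomial lies in $J$ iff, after discarding its monomials lying in $J$, each of its $\sim_J$-class components lies in $J$. I would first prove that, on monomials outside $M$, the relation $\sim_J$ coincides with $\sim_I$. The plan is to take a relation $u-\lambda v=\sum a_i b_i+\sum c_j m_j$ with $b_i$ binomial generators of $I$ and $m_j\in M$, and follow a chain of binomial moves from $u$ to $v$. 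Any step that passes through a monomial of $M$ would produce a monomial of $M$ congruent to $u$ modulo $J$. That would force $u\in J$, hence $u\in M$, which is a contradiction. So the chain stays among non-$M$ monomials and uses only binomials of $I$.

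\textbf{Step 2 (reduction).} Let $f\in\sqrt{J}$. Split $f=f_0+f_1$, where $f_1$ is supported on monomials in $M$ and $f_0$ on monomials outside $M$. Since $f_1\in J$, we may assume $f=f_0$. Now $f^k\in J$. The goal is to deduce that $f\in I$, or at least that $f\in J$.

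\textbf{Step 3 (key step).} Consider the image $\bar f$ of $f$ in the reduced ring $A=\KK[\mathbf{x}]/I$, and let $\bar M$ be the image of $M$ in $A$. We have $\bar f^k\in\bar M$. I would then show that $\bar M$ is a radical ideal of $A$. By Step 1, $\bar M$ is spanned by the classes of monomials of $M$, and these classes do not mix with classes of non-$M$ monomials. Hence $A/\bar M$ is spanned by the non-$M$ monomial classes, with multiplication given by the monoid structure modulo $\sim_I$, and products falling into $M$ are killed. A nilpotent element of $A/\bar M$ supported on non-$M$ classes would then be detected by a leading class with respect to a suitable term order compatible with the grading. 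Its $k$-th power has a leading class which is nonzero in $A$, because $A$ is reduced and $\sim_I$ is cancellative enough on radical binomial ideals. That leading class is not in $M$ by squarefreeness of $M$: if $u^k\in M$ then $u\in M$. This contradicts $\bar f^k\in\bar M$ unless $\bar f=0$. So $f\in I\subseteq J$.

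The main obstacle is Step 3, in particular justifying the "leading class" argument. The classes of $\sim_I$ need not be finite or cancellative, and $A$ is only reduced, not a domain. If the direct argument fails, my fallback is to decompose $I=\bigcap_j P_j$ into binomial primes, passing to $\overline{\KK}$ since radicality descends. I would then verify $J=\bigcap_j (P_j+M)$ using Step 1, and show that each $P_j+M$ is radical. The latter works because $P_j+M$ reduces to a prime binomial ideal plus the variables it forces, which is a sum of a lattice-type prime and a squarefree monomial ideal in disjoint variables.
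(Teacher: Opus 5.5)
Your Step~1 is essentially sound. The component decomposition behind it even gives the stronger fact you actually need: a polynomial supported on monomials outside $M$ lies in $J=I+M$ if and only if it lies in $I$. Step~2 is harmless.

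\textbf{The gap is Step~3.} Step~3 carries the whole content of the proposition, and the argument you sketch for it does not work. A ``leading class'' argument needs a total order on the $\sim_I$-classes that is compatible with multiplication, and in general there is none. In $A=\mathbb{C}[x,y]/(x^2-y^2)$ one has $[x]\neq[y]$ but $[x]^2=[y]^2$, so $[x]\prec[y]$ would give $[x^2]\preceq[xy]\preceq[y^2]=[x^2]$, which is false. Concretely, $\bar f=x-iy$ satisfies $\bar f^{2}=-2i\,xy$ in $A$, so the class $[x]^2=[y]^2$ predicted by a leading-term argument has coefficient $0$. Reducedness of $A$ only gives $\bar f^k\neq 0$, not $\bar f^k\notin\bar M$. ``$\sim_I$ is cancellative enough'' is precisely what fails when the class monoid has torsion.

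\textbf{The fallback also fails}, because $P_j+M$ need not be radical under the hypotheses. Take $I=(xz-yz)=(x-y)\cap(z)$ and $M=(xy,xz,yz)$. Then $I+M=M$, so all hypotheses hold. Yet $(x-y)+M=(x-y,x^2,xz)$ contains $x^2$ but not $x$. The monomial part is not in variables disjoint from the lattice prime. Moreover, monomials of $P_j+M$ need not lie in $M$, so Step~1 does not apply to the pairs $(P_j,M)$, and the identity $J=\bigcap_j(P_j+M)$ is not justified either.

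\textbf{The missing ingredient} is structural information about radicals of binomial ideals. The paper uses the Eisenbud--Sturmfels fact that $\sqrt{I+M}=\sqrt{I}+M_1$ for some monomial ideal $M_1$. Then every monomial $u\in M_1$ has a power in $I+M$, hence in $M$, hence $u\in M$ because $M$ is radical. So $\sqrt{I+M}=I+M_1\subseteq I+M$.

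Your own route can also be repaired using only that $\sqrt{J}$ is binomial, hence spanned by the monomials and binomials it contains:
\begin{enumerate}
\item Monomials of $\sqrt{J}$ lie in $M$, by the hypothesis and the radicality of $M$.
\item Take a binomial $b=u-\lambda v\in\sqrt{J}$. If one of $u,v$ lies in $M$, so does the other, and $b\in M$. So assume $u,v\notin M$.
\item Then $uv\notin M$. Otherwise $ub=u^2-\lambda uv$ would give $u\in\sqrt{J}$, hence $u\in M$.
\item By squarefreeness of $M$, every monomial $u^{k-i}v^{i}$ of $b^k$ lies outside $M$.
\item Since $b^k\in J$ for some $k$, your strengthened Step~1 gives $b^k\in I$, whence $b\in I$ because $I$ is radical.
\end{enumerate}
This shows $\sqrt{J}\subseteq J$.
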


\begin{proof}
By \textup{\cite[Proposition~3.4]{eisenbud1996binomial}}, we know that there exists a monomial ideal $M_{1}$ such that
\begin{equation*}
\sqrt{I+M}=\sqrt{I}+M_{1}.
\end{equation*}
We claim that $M_{1}\subseteq M$. Let $u$ be any monomial in $M_{1}.$ Then it follows that $u\in \sqrt{I+M}$ as $\sqrt{I+M}=\sqrt{I}+M_{1}.$ Hence there exists some positive integer $k$ such that $u^{k}\in I+M$. Since $u^{k}$ is also a monomial, and because of the assumption that any monomial in $I+M$ belongs to $M$, it follows that $u^{k}\in M$. Thus, since $M$ is radical, this implies that $u\in M$. Hence $M_{1}\subseteq M$. Combining this with the radicality of $I$, it follows that
\[\sqrt{I+M}=\sqrt{I}+M_{1}=I+M_{1}\subseteq I+M,\]
hence $I+M$ is radical.
\end{proof}

\begin{proposition}\label{prop: ini(I+M)}
Let $I,M\subseteq \mathbb{K}[\mathbf{x}]$ be ideals, where $I$ is
binomial and $M$ is monomial. Assume that every monomial contained in $I+M$ belongs to $M$. Then $\ini(I+M)=\ini(I)+M$, for any monomial order $\prec$.
\end{proposition}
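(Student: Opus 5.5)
The inclusion $\ini(I)+M\subseteq \ini(I+M)$ is immediate. Indeed, $\ini(I)\subseteq\ini(I+M)$ because $I\subseteq I+M$, and for a monomial $u\in M$ we have $u=\ini(u)$ with $u\in I+M$. So the real content is the reverse inclusion. I plan to get it by showing, via Buchberger's criterion, that a suitable finite set is a Gröbner basis of $I+M$ whose leading terms generate exactly $\ini(I)+M$.

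Concretely, fix $\prec$ and let $\mathcal{G}$ be the reduced Gröbner basis of $I$. By \textup{\cite[Proposition~1.1]{eisenbud1996binomial}}, since $I$ is binomial, every element of $\mathcal{G}$ is either a monomial or a binomial $u-cv$ with $u\succ v$ monomials and $c\in\KK^{*}$. Let $\{w_1,\dots,w_r\}$ be the minimal monomial generators of $M$. I will show that $\mathcal{G}\cup\{w_1,\dots,w_r\}$ is a Gröbner basis of $I+M$. This set clearly generates $I+M$, and its leading terms generate $\ini(I)+M$, so the claim yields $\ini(I+M)=\ini(I)+M$.

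To verify Buchberger's criterion, I will check every S-pair.
\begin{enumerate}
\item Pairs of elements of $\mathcal{G}$ reduce to zero modulo $\mathcal{G}$, since $\mathcal{G}$ is a Gröbner basis of $I$.
\item Pairs of two monomials have S-polynomial $0$.
\item For a pair $(u-cv,\,w_j)$, set $L=\lcm(u,w_j)$. The S-polynomial is
\[
\tfrac{L}{u}(u-cv)-\tfrac{L}{w_j}w_j=-c\,\tfrac{L}{u}\,v .
\]
This is a scalar multiple of a monomial and it lies in $I+M$. By the hypothesis, that monomial lies in $M$. A monomial in a monomial ideal is divisible by some generator $w_k$, so it reduces to $0$ in one step.
\item If an element of $\mathcal{G}$ is itself a monomial, its S-pair with any $w_j$ is $0$. (Such a monomial lies in $I\subseteq I+M$, hence in $M$ by the hypothesis, though this is not even needed for the criterion.)
\end{enumerate}

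The only step with real content is the mixed S-pair between a binomial of $\mathcal{G}$ and a monomial generator of $M$. This is exactly where the hypothesis that every monomial in $I+M$ lies in $M$ enters. The other ingredient is the fact that a reduced Gröbner basis of a binomial ideal consists of binomials and monomials, which guarantees that these S-polynomials are single terms. Everything else is routine.
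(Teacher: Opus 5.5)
Your proof is correct and is essentially the paper's argument: take a Gr\"obner basis of $I$ consisting of binomials (via Eisenbud--Sturmfels), adjoin the minimal monomial generators of $M$, and check Buchberger's criterion. The only nontrivial $S$-pairs are the mixed ones, which are scalar multiples of monomials in $I+M$ and hence lie in $M$; your separate handling of monomial elements of the reduced basis and of the easy inclusion just makes explicit what the paper leaves implicit.
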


\begin{proof}
Let $\mathcal{G}$ be a homogeneous Gr\"obner basis of $I$ consisting of binomials, which exists by \textup{\cite[Proposition~1.1]{eisenbud1996binomial}}, and let $\mathcal{G}_{1}$ be the minimal generating set of $M$. We claim that $\mathcal{G}\cup \GG_{1}$ forms a Gr\"obner basis of $I+M$. To show this, we use Buchberger’s criterion and verify that every $S$-pair $S(f,h)$ with $f,h \in \mathcal{G}\cup \GG_{1}$ reduces to zero. This is immediate when both $f$ and $h$ belong to $\GG$, or when both belong to $\GG_{1}$, since $\GG$ and $\GG_{1}$ are Gröbner bases of $I$ and $M$ respectively. 

Now, suppose that $f\in \GG$ and $h\in \GG_{1}$. We then have 
\begin{align*}
S(f,h)=\frac{\lcm(\ini(f),h)f}{\ini(f)}-\frac{\lcm(\ini(f),h)h}{h}
=\frac{\lcm(\ini(f),h)(f-\ini(f))}{\ini(f)}.
\end{align*}
Since $f$ is a binomial, it follows that $S(f,h)$ is a monomial. Moreover, by construction, we have $S(f,h)\in I+M$, and hence $S(f,h)\in M$, since every monomial in $I+M$ must belong to $M$ by assumption.
Hence, $S(f,h)\in M$ and it reduces to zero with respect to $\GG_{1}$. This proves that $\GG\cup \GG_{1}$ is a Gr\"obner basis, and hence $\ini(I+M)=\ini(I)+M$.
\end{proof}

Let $\phi:\KK[\mathbf{x}]\rightarrow \KK[\mathbf{y}]$ be a monomial map given by $\phi(x_i)=y^{a_{i}}$ for $i\in [n]$, where $a_i\in \ZZ^{m}$. This induces a $\ZZ^{m}$-grading given by 
\begin{equation}\label{grading phi}\deg(x_i)=a_{i}\in \ZZ^{m}.
\end{equation}
Note that if $\phi(x^{r})=y^{s}$, then $\deg(x^{r})=s$.

\begin{proposition}\label{prop: sum of initials and upper closed}
Let $I,M\subseteq \KK[\mathbf{x}]$ be ideals, with $M$ monomial, and let $\phi:\KK[\mathbf{x}]\rightarrow \KK[\mathbf{y}]$ be a monomial map such that $I$ is homogeneous with respect to the grading in Equation ~\eqref{grading phi}. Suppose that whenever $u$ and $v$ are monomials with $u\in M$ and $\deg(u)\leq \deg(v)$, then $v\in M$. Then $\ini(I+M)=\ini(I)+M$, for any monomial order $\prec$. 
\end{proposition}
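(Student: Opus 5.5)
The plan is to prove the nontrivial inclusion $\ini(I+M)\subseteq \ini(I)+M$ by reducing to multidegree components; the other inclusion is immediate, since $\ini(I)\subseteq\ini(I+M)$ and $M=\ini(M)\subseteq \ini(I+M)$. Two facts drive the argument. First, $M$ is a monomial ideal, so it is $\ZZ^m$-homogeneous for the grading \eqref{grading phi}. Since $I$ is homogeneous by hypothesis, $I+M$ is $\ZZ^m$-homogeneous as well. Second, a $\ZZ^m$-homogeneous ideal has its initial ideal generated by initial monomials of homogeneous elements. Indeed, if $f\in I+M$, each homogeneous component of $f$ lies in $I+M$, and $\ini(f)$ is the initial monomial of one of these components: the leading term of $f$ occurs in exactly one component $f_s$ and is the largest monomial there. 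So it suffices to show that $\ini(f)\in \ini(I)+M$ for every nonzero $\ZZ^m$-homogeneous $f\in I+M$.

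The key step uses the upward-closure hypothesis on $M$. Fix such an $f$ of degree $s$ and write $f=g+h$ with $g\in I$ and $h\in M$. Taking degree-$s$ components, we may assume $g$ and $h$ are homogeneous of degree $s$ as well. We then split into two cases.

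\emph{Case 1: some monomial of degree $s$ lies in $M$.} By hypothesis, every monomial $v$ with $\deg(v)=s$ then lies in $M$, since $\deg(u)\le\deg(v)$ with equality. In particular $\ini(f)$, which has degree $s$, lies in $M$.

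\emph{Case 2: no monomial of degree $s$ lies in $M$.} Then $h$, a polynomial of degree $s$ in the monomial ideal $M$, is a combination of monomials of degree $s$ in $M$. So $h=0$, hence $f=g\in I$ and $\ini(f)\in\ini(I)$.

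In either case $\ini(f)\in\ini(I)+M$, which gives the equality.

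The only step needing care is the reduction to homogeneous elements: that $\ini(I+M)$ is generated by initial monomials of $\ZZ^m$-homogeneous elements. The subtlety is that this grading may not be positive, so a degree component need not be finite-dimensional. The argument above never uses finiteness; it only uses that $\ini(f)$ is the initial monomial of one homogeneous component of $f$, and that each component of $f$ lies in $I+M$. Both facts hold for any grading by monomial degrees. The comparison $\deg(u)\le\deg(v)$ is only invoked with equality, so the partial order on $\ZZ^m$ causes no issue.
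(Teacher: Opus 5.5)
Your proof is correct, and it takes a different route from the paper's. The paper verifies Buchberger's criterion for $\mathcal{G}\cup\mathcal{G}_1$, where $\mathcal{G}$ is a multigraded Gr\"obner basis of $I$ and $\mathcal{G}_1$ is the set of minimal generators of $M$. The only nontrivial S-pairs are $S(f,h)$ with $f\in\mathcal{G}$ and $h\in\mathcal{G}_1$. These are multihomogeneous of degree $\deg(\lcm(\ini(f),h))\geq\deg(h)$, so they lie in $M$ by upward closure and reduce to zero.

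You work degree by degree instead. The hypothesis forces each graded piece $M_s$ to be either $0$ or spanned by all monomials of degree $s$. Hence $(I+M)_s$ is either $I_s$ or the whole graded piece. The equality of initial ideals then follows because the leading monomial of any $f\in I+M$ is the leading monomial of one of its homogeneous components, each of which lies in $I+M$.

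The two conclusions are equivalent: $\ini(I+M)=\ini(I)+M$ holds exactly when $\mathcal{G}\cup\mathcal{G}_1$ is a Gr\"obner basis of $I+M$. So the paper's route gains no extra content; its advantage is that it reuses the S-pair template of Proposition~\ref{prop: ini(I+M)}.

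Your route avoids S-pairs altogether. As you note, it uses the hypothesis only with $\deg(u)=\deg(v)$ and never needs the $a_i$ to be nonnegative. So it proves the statement under the weaker assumption that whether a monomial lies in $M$ depends only on its multidegree. The paper's argument could be sharpened the same way: compare $S(f,h)$ with the monomial $\lcm(\ini(f),h)\in M$, which has exactly the same multidegree, rather than with $h$.
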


\begin{proof}
Let $\mathcal{G}$ be a Gr\"obner basis of $I$ which is graded with respect to the multigrading in Equation~\eqref{grading phi}, and let $\GG_{1}$ be
the minimal generating set of $M$. We claim that $\mathcal{G}\cup \GG_{1}$ forms a Gr\"obner basis of $I+M$.  
To show this, we use Buchberger’s criterion and verify that every $S$-pair $S(f,h)$ with $f,h \in \mathcal{G}\cup \GG_{1}$ reduces to zero. As in the proof of Proposition~\ref{prop: ini(I+M)}, it suffices to consider the case where $f\in \GG$ and $h\in \GG_{1}$. In this case, we have 
\begin{align*}
S(f,h)=\frac{\lcm(\ini(f),h)f}{\ini(f)}-\frac{\lcm(\ini(f),h)h}{h}
=\frac{\lcm(\ini(f),h)(f-\ini(f))}{\ini(f)}.
\end{align*}
Since $f$ is homogeneous with respect to the multigrading in~\Cref{grading phi}, it follows that $\deg(f-\ini(f))=\deg(\ini(f))$, and hence
\begin{align*}
\deg(S(f,h))&=\deg(\lcm(\ini(f),h))+\deg(f-\ini(f))-\deg(\ini(f))\\
&=\deg(\lcm(\ini(f),h))\geq \deg(h).
\end{align*}
Since $h$ is a monomial of $M$, and every monomial of $S(f,h)$ has degree greater or equal that $\deg(h)$, it follows from our assumptions that each such monomial belongs to $M$ and hence $S(f,h)\in M$. Thus, $S(f,h)$ reduces to zero with respect to $\GG_{1}$. This proves that $\GG\cup \GG_{1}$ is a Gr\"obner basis, and hence 
$\ini(I+M)=\ini(I)+M$.
\end{proof}

We now provide a sufficient condition for having the property that every monomial of $I+M$ belongs to $M$.

\begin{proposition}\label{prop: sufficiento condition for monomials in I+M}
Let $I,M\subseteq \KK[\mathbf{x}]$ be ideals, with $M$ monomial, and let $\phi:\KK[\mathbf{x}]\rightarrow \KK[\mathbf{y}]$ be a monomial map such that $I\subseteq \ker(\phi)$ and with the property that whenever $u$ and $v$ are monomials with $u\in M$ and $\phi(u)= \phi(v)$, then $v\in M$. 
Then every monomial contained in $I+M$ belongs to $M$.
\end{proposition}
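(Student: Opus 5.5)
Take a monomial $u\in I+M$ and write $u=f+g$ with $f\in I$ and $g\in M$. We want to show $u\in M$. The idea is to exploit the fact that $\phi$ kills $f$ (since $I\subseteq\ker(\phi)$), and then compare coefficients on the fiber $\phi^{-1}(\phi(u))$ of monomials.

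First, expand $g=\sum_{w} c_w w$ as a $\KK$-linear combination of monomials. Since $M$ is a monomial ideal, every monomial $w$ with $c_w\neq 0$ lies in $M$. Next apply $\phi$ to the identity $u=f+g$. Because $\phi(f)=0$, we obtain
\[
\phi(u)=\phi(g)=\sum_w c_w\,\phi(w).
\]
The left-hand side is the single monomial $\phi(u)$ with coefficient $1$, since $\phi$ sends monomials to monomials. Collecting terms on the right according to their image, the coefficient of $\phi(u)$ is $\sum_{w:\,\phi(w)=\phi(u)} c_w$, and this sum must equal $1$. In particular, there is at least one monomial $w$ with $c_w\neq 0$ and $\phi(w)=\phi(u)$.

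For such a $w$ we have $w\in M$ and $\phi(w)=\phi(u)$. The hypothesis on $M$ (if $w\in M$ and $\phi(w)=\phi(u)$, then $u\in M$) now gives $u\in M$, as required.

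The only step needing care is the coefficient comparison. It works because $\phi$ is a ring homomorphism sending each monomial to a monomial with coefficient $1$, so $\phi(g)$ is computed by merging the terms of $g$ with equal images; a nonzero coefficient of $\phi(u)$ forces some such term to exist. No homogeneity assumption is needed, and none of the earlier propositions is required beyond this elementary argument.
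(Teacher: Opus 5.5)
Your proposal is correct and follows essentially the same argument as the paper: apply $\phi$ to $u=p+q$, use $\phi(p)=0$, and compare coefficients of $\phi(u)$ to find a monomial $x^{r}$ in the support of $q$ (hence in $M$) with $\phi(x^{r})=\phi(u)$. The hypothesis on $M$ then gives $u\in M$. Your explicit justification that the coefficient of $\phi(u)$ in $\phi(q)$ equals $\sum_{\phi(w)=\phi(u)}c_w=1$ just spells out the step the paper states in one line.
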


\begin{proof}
Suppose that we have
\begin{equation}\label{upq2}u=p+q \quad \text{with  $p\in I,q\in M$ and $u$ a monomial}.\end{equation}
We must show that $u\in M$. Since $I\subseteq \ker(\phi)$, applying $\phi$ to~\Cref{upq2} gives $\phi(u)=\phi(q)$. Set
\[q:=\sum_{r\in \ZZ_{\geq 0}^{n}}c_{r}x^{r}.\]
Then, we have
\[\phi(q)=\sum_{r\in \ZZ_{\geq 0}^{n}}c_r\phi(x^{r}).\]
Since $\phi(q)=\phi(u)$, there must exist at least one $r\in \ZZ_{\geq 0}^{n}$ such that $c_{r}\neq 0$ and $\phi(x^{r})=\phi(u)$. 
Since $q\in M$, it follows that $x^{r}\in M$ as well. Since $x^{r}\in M$ and $\phi(x^{r})=\phi(u)$, it follows from our assumption that $u\in M$, completing the proof.
\end{proof}

\subsection{v-number for coordinate-saturated ideals}\label{framework coordinate saturated}

Let $I\subset \KK[\mathbf{x}]=\KK[x_{1},\ldots,x_n]$ be a radical ideal. For each $S\subseteq [n]$, we define
\[P_{S}(I):=\Big((I+(x_{i}:i\in S)): \prod_{i\notin S}x_{i}^{\infty}\Big).\]
\begin{definition}\label{def: coordinate-saturated}\normalfont
A radical ideal $I\subseteq \KK[x_1,\ldots,x_n]$ is said to be \emph{coordinate-saturated} if every minimal prime of $I$ is of the form $P_S(I)$ for some $S\subseteq [n]$, and, for every such $S$,
\begin{equation}\label{second cond coord saturated}
(I+(x_i:i\in S))\cap \KK[x_i:i\notin S]
=
I\cap \KK[x_i:i\notin S].
\end{equation}
\end{definition}

\begin{lemma}\label{expression of PSI}
Let $I$ be a coordinate-saturated ideal and let $P_{S}(I)$ be a minimal prime of $I$. Then
\begin{equation*}
P_{S}(I)=\Big((I\cap \KK[x_i:i\notin S]): \prod_{i\notin S}x_{i} \Big)+(x_{i}:i\in S).
\end{equation*}
\end{lemma}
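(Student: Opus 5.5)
We will prove the equality by showing two inclusions, after first rewriting $P_S(I)$ modulo the variables $x_i$, $i\in S$. Write $A=\KK[x_i:i\notin S]$, $J=I\cap A$, $u=\prod_{i\notin S}x_i$, and $\mathfrak{m}_S=(x_i:i\in S)$. Every polynomial $f$ decomposes uniquely as $f=f_0+f_1$ with $f_0\in A$ and $f_1\in \mathfrak{m}_S$. Hence $I+\mathfrak{m}_S=((I+\mathfrak{m}_S)\cap A)+\mathfrak{m}_S$. By condition \eqref{second cond coord saturated}, this equals $J+\mathfrak{m}_S$, where $J$ is viewed as extended to $\KK[\mathbf{x}]$. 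Since $u$ involves only variables outside $S$, saturation commutes with adding $\mathfrak{m}_S$: an element $f=f_0+f_1$ satisfies $fu^k\in J\KK[\mathbf{x}]+\mathfrak{m}_S$ iff $f_0u^k\in J$, because $(J\KK[\mathbf{x}])\cap A=J$ and $f_0u^k\in A$. So
\[
P_S(I)=(J:_A u^{\infty})+\mathfrak{m}_S .
\]
It therefore remains to show $(J:u^\infty)=(J:u)$ in $A$.

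Next we use that $J$ is radical: it is the contraction of the radical ideal $I$. For a radical ideal, $(J:u^\infty)=(J:u)$. Indeed, if $fu^k\in J$, then $(fu)^k=f^{k-1}\cdot fu^k\in J$, so $fu\in\sqrt J=J$. The reverse inclusion $(J:u)\subseteq(J:u^\infty)$ is trivial. Combining this with the display gives
\[
P_S(I)=(J:u)+\mathfrak{m}_S,
\]
which is the claim of Lemma~\ref{expression of PSI}, with colons taken in $A$ and then extended.

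The only delicate step is the commutation of the saturation with adding $\mathfrak{m}_S$, that is, the identity $(J\KK[\mathbf{x}]+\mathfrak{m}_S):u^\infty=(J:_Au^\infty)\KK[\mathbf{x}]+\mathfrak{m}_S$. This follows from the decomposition $\KK[\mathbf{x}]=A\oplus\mathfrak{m}_S$ as $A$-modules together with $u\in A$. Multiplication by $u$ preserves both summands, so membership can be checked componentwise. Note that minimality of $P_S(I)$ is not actually needed, only the coordinate-saturated condition for this $S$.
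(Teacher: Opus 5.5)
Your proof is correct and follows essentially the paper's argument. Like the paper, you split $f=f_0+f_1$ along $\KK[\mathbf{x}]=A\oplus\mathfrak{m}_S$, use \eqref{second cond coord saturated} to get $f_0u^k\in I\cap A$, and use radicality of $I\cap A$ to replace $u^\infty$ by $u$; you spell out that last step, which the paper only asserts. One phrase needs tightening: ``because $(J\KK[\mathbf{x}])\cap A=J$'' should read $(J\KK[\mathbf{x}]+\mathfrak{m}_S)\cap A=J$, which is exactly \eqref{second cond coord saturated} (or follows by applying the retraction $x_i\mapsto 0$ for $i\in S$).
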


\begin{proof}
The inclusion $P_{S}(I) \supseteq ((I\cap \KK[x_i:i\notin S]): \prod_{i\notin S}x_{i})+(x_{i}:i\in S)$ is clear. To prove the other inclusion, consider $f\in P_{S}(I)$. By definition, there exists a monomial $u\in (x_{i}:i\notin S)$ such that $fu\in I+(x_{i}:i\in S)$. Decompose $f$ as $f=f_{1}+f_{2}$, where $f_{1}\in \mathbb{K}[x_{i}:i\notin S]$ and $f_{2}\in (x_{i}:i\in S)$. It then follows that $f_{1}u\in  I+(x_{i}:i\in S)$. Note that $f_{1}u\in (I+(x_{i}:i\in S))\cap \KK[x_i:i\notin S]$, and hence by~\Cref{second cond coord saturated}, it follows that $f_{1}u\in I\cap \KK[x_i:i\notin S]$. Consequently, 
\[f_{1}\in \Big((I\cap \KK[x_i:i\notin S]):\prod_{i\notin S}x_{i}^{\infty}\Big)=\Big((I\cap \KK[x_i:i\notin S]):\prod_{i\notin S}x_{i}\Big),\]
where the equality holds since $I$ is radical. It then follows that 
\[f=f_{1}+f_{2}\in \Big((I\cap \KK[x_i:i\notin S]): \prod_{i\notin S}x_{i}\Big)+(x_{i}:i\in S),\]
proves that $P_{S}(I) \subseteq ((I\cap \KK[x_i:i\notin S]): \prod_{i\notin S}x_{i})+(x_{i}:i\in S)$.
\end{proof}

The goal of this section is to study the $\vn$-number of coordinate-saturated ideals. We begin by observing that this class includes several important families of ideals that have been extensively studied in the literature, including binomial edge ideals \cite{herzog2010binomial}, generalized binomial edge ideals \cite{Rauh13}, 
LSS ideals over $\RR$ \cite{hmsw15}, and LSS ideals of forest graphs \cite{liwski2025lov}. In the subsequent sections, we will show that these classes of ideals are coordinate-saturated and apply the results developed here to each of these families. We now provide a sufficient condition for an ideal to be coordinate-saturated.

\begin{notation}
For each $\qq\in \min(I)$, we denote $S(\qq)=\{i\in [n]: x_{i}\in \qq\}$. For each $S\subset [n]$, we define the $\KK$-algebra homomorphism $\pi_{S}:\KK[\mathbf{x}]\rightarrow \KK[\mathbf{x}]$ given by 
\[\pi_{S}(x_{i}):=
\begin{cases}
x_{i} & \text{if $i\notin S$;}\\
0 & \text{if $i\in S$}.
\end{cases}
\]
\end{notation}

\begin{proposition}\label{prop: sufficient conditions for coordinate-saturated}
Let $I$ be a radical ideal. Suppose that, for every $\qq\in\min(I)$, the following conditions hold: \begin{enumerate}[label=\rm(\roman*)] \item\label{s1} $\pi_{S(\qq)}(I)\subseteq I$. \item\label{s2} The ideal $I+(x_i:i\in S(\qq))$ has a unique minimal prime containing no indeterminates $\{x_i:i\notin S(\qq)\}$. \end{enumerate} 
Then $I$ is coordinate-saturated. \end{proposition}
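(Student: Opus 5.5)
The plan is to fix $\qq\in\min(I)$, put $S:=S(\qq)$, and check both requirements of \Cref{def: coordinate-saturated} for this $S$. I will show that $\qq=P_S(I)$ and that \eqref{second cond coord saturated} holds for $S$. I do the intersection condition first, since the identification of $\qq$ with $P_S(I)$ will use it.

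\emph{Step 1: the intersection condition.} The inclusion $\supseteq$ in \eqref{second cond coord saturated} is trivial. For $\subseteq$, take $f\in\KK[x_i:i\notin S]$ with $f=g+h$, where $g\in I$ and $h\in(x_i:i\in S)$. Apply $\pi_S$. Since $\pi_S$ fixes $f$ and kills $h$, we get $f=\pi_S(g)$, and $\pi_S(g)\in I$ by condition (i). Hence $f\in I\cap\KK[x_i:i\notin S]$.

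\emph{Step 2: $P_S(I)\subseteq\qq$.} Write $J:=I+(x_i:i\in S)$ and $u:=\prod_{i\notin S}x_i$. We have $J\subseteq\qq$ by the definition of $S$. Also $u\notin\qq$, because $\qq$ is prime and contains no $x_i$ with $i\notin S$. Hence $P_S(I)=J:u^\infty\subseteq\qq:u^\infty=\qq$.

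\emph{Step 3: $\qq\subseteq\sqrt{P_S(I)}$.}
\begin{itemize}
\item First, $\qq$ is a minimal prime of $J$. Any prime $\pp$ with $J\subseteq\pp\subseteq\qq$ contains $I$, so $\pp=\qq$ by minimality of $\qq$ over $I$. Since $\qq$ contains no $x_i$ with $i\notin S$, condition (ii) says $\qq$ is the unique minimal prime of $J$ avoiding all $x_i$, $i\notin S$.
\item Next, the associated primes of the saturation $J:u^\infty$ are exactly the associated primes of $J$ not containing $u$.
\item Let $\pp$ be a minimal prime of $J:u^\infty$. Then $\pp$ contains some minimal prime of $J$, and $\pp$ is contained in an associated prime of $J$ avoiding $u$. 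That minimal prime of $J$ therefore also avoids $u$, so by uniqueness it equals $\qq$; since $\pp\subseteq\qq$ by Step 2, this forces $\pp=\qq$.
\end{itemize}
Hence $\sqrt{P_S(I)}=\qq$.

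\emph{Step 4: $P_S(I)$ is radical.} This is the main obstacle, because $J$ itself need not be radical. I would imitate the proof of \Cref{expression of PSI}. Take $f$ with $f^k\in P_S(I)$. Decompose $f=f_1+f_2$ with $f_1\in\KK[x_i:i\notin S]$ and $f_2\in(x_i:i\in S)$. Then $f_1^k u^m\in J$ for some $m$. This element lies in $\KK[x_i:i\notin S]$, so Step 1 gives $f_1^k u^m\in I$. Since $I$ is radical, we get $f_1u\in I$, hence $f_1\in P_S(I)$ and therefore $f\in P_S(I)$.

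Combining Steps 2–4 gives $\qq=\sqrt{P_S(I)}=P_S(I)$. Together with Step 1, this shows $I$ is coordinate-saturated.
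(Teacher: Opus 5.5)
Your proof is correct and follows the same route as the paper: the $\pi_{S(\qq)}$ argument gives the intersection condition, and $\qq$ is then identified as the unique minimal prime of $I+(x_i:i\in S(\qq))$ avoiding the variables $x_i$, $i\notin S(\qq)$. One small fix in Step~3: $\pp=\qq$ follows from $P_S(I)\subseteq\qq\subseteq\pp$ together with the minimality of $\pp$ over $P_S(I)$, not from ``$\pp\subseteq\qq$ by Step~2''.

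Your Step~4 goes beyond the paper, which simply declares it ``clear'' that this prime equals $P_{S(\qq)}(I)$; a priori the saturation only has $\qq$ as its radical. Your argument via Step~1 and the radicality of $I$ (as in Lemma~\ref{expression of PSI}) is exactly what justifies that equality.
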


\begin{proof}
We first show that 
\begin{equation}\label{Sq}(I+(x_i:i\in S(\qq)))\cap \KK[x_i:i\notin S(\qq)]
=
I\cap \KK[x_i:i\notin S(\qq)]\end{equation}
for every $\qq\in \min(I)$. The inclusion $\supset$ in~\Cref{Sq} is clear. To show the other inclusion, let $f$ be a polynomial on the left-hand side of~\Cref{Sq}. Since $\pi_{S(\qq)}|_{\KK[x_{i}:i\notin S(\qq)]}=\text{Id}$, we have $\pi_{S(\qq)}(f)=f$. Moreover, since $\pi_{S(\qq)}|_{\KK[x_{i}:i\in S(\qq)]}=0$, it follows that $\pi_{S(\qq)}(f)\in \pi_{S(\qq)}(I)\subseteq I$ by~\Cref{s1}. Hence $f\in I$, which proves the inclusion $\subset$ in~\Cref{Sq}.

By~\Cref{s2}, $I+(x_i:i\in S(\qq))$ has a unique minimal prime containing no indeterminates from $\{x_{i}:i\notin S(\qq)\}$. It is clear that such minimal prime is
\[P_{S(\qq)}(I):=\Big(I+(x_{i}:i\in S(\qq)): \prod_{i\notin S(\qq)}x_{i}^{\infty}\Big).\]
On the other hand, since $\qq$ is a minimal prime of $I$ and $I\subset I+(x_{i}:i\in S(\qq))\subseteq \qq$, it follows that $\qq$ is a minimal prime of $I+(x_i:i\in S(\qq))$, and since $\qq$ has no indeterminates from $\{x_{i}:i\notin S(\qq)\}$, it follows that $\qq$ is such unique minimal prime. Hence, it must follows that $\qq=P_{S(\qq)}(I)$. This proves that $I$ is coordinate-saturated.
\end{proof}

In the remaining of this section, we fix $I$ to be a coordinate-saturated ideal such that $P_{\emptyset}(I)$ is prime, and thus a minimal prime of $I$. We let $\pp:=P_{\emptyset}(I)$. Note that $\pp$ being prime is equivalent to $I$ having a unique minimal prime containing no indeterminates, in which case it is precisely $\pp$. The goal of this section is to compute $\vn_{\pp}(I)$.

\begin{definition}\normalfont\label{def: transversals}
{\rm A {\it hypergraph} $\mathcal{H}$ is a pair of two sets $(V(\mathcal{H}), E(\mathcal{H}))$, where $V(\mathcal{H})$ is called the vertex set of $\mathcal{H}$ and $E(\mathcal{H})$ is a collection of subsets of $V(\mathcal{H})$, called the edge set of $\mathcal{H}$. A \textit{simple} hypergraph is a hypergraph such that no two elements (called edges) of $E(\mathcal{H})$ contain each other. A simple graph is an example of a simple hypergraph, whose edges are of cardinality two.}
We say that a subset $T\subseteq [n]$ is a {\em transversal} of $\mathcal{H}$ if $T\cap e\neq \emptyset$ for all $e\in \mathcal{H}$. We denote by $\TT(\mathcal{H})$ the set of all transversals of $\mathcal{H}$.
\end{definition}

Note that the term hypergraph in this manuscript always denotes a simple hypergraph.

\begin{definition}\label{ideal M and gamma}
Let
\[
M:=\Big\langle x^{u} : x^{u}\in
\bigcap_{\qq\in\min(I)\setminus\{\pp\}}\qq
\Big\rangle.
\]
Since
\[
(I:\pp)=\bigcap_{\qq\in\min(I)\setminus\{\pp\}}\qq,
\]
$M$ is precisely the monomial ideal generated by the monomials contained in $(I:\pp)$.
Note that the generating set of $M$ is nonempty since, given our assumption above, every $\qq\in \min(I)\setminus \{\pp\}$ contains some indeterminate.
Moreover, let $\gamma(I):=\min\{\deg(x^{u}): x^{u}\in M\}.$
We also define the hypergraph $\Delta$ on the vertex set $[n]$ with set of edges 
\begin{equation}\label{hypergraph delta}E(\Delta):=\{S(\qq):\qq\in \min(I)\setminus \{\pp\}\}.\end{equation}
Again, note that $S(\qq)\neq \emptyset$ for all $\qq\in \min(I)\setminus \{\pp\}$.
One can easily see that 
\begin{equation}\label{M and transversals}M=\langle x^{u}: \text{$\supp(u)$ is a transversal of $\Delta$} \rangle,\end{equation}
and hence
\begin{equation}\label{gamma and transversals}\gamma(I):=\min\{\size{T}: \text{$T$ is a transversal of $\Delta$}\}.\end{equation}
In particular, one can see from this that $M$ is square-free.
\end{definition}

We start by proving that $\gamma(I)$ provides an upper bound for $\vpp$.

\begin{proposition}\label{prop: inequality}
Let $I$ be a coordinate-saturated ideal. Then $\vpp\leq \gamma(I)$.
\end{proposition}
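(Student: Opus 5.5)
We exhibit an explicit witness of degree $\gamma(I)$. By \Cref{gamma and transversals}, choose a transversal $T$ of $\Delta$ with $\size{T}=\gamma(I)$ and set $f:=\prod_{i\in T}x_i$. This is a squarefree monomial of degree $\gamma(I)$. By \Cref{M and transversals} it lies in $M$, and hence in $(I:\pp)=\bigcap_{\qq\in\min(I)\setminus\{\pp\}}\qq$. The claim is that $(I:f)=\pp$, which gives $\vpp\le\deg f=\gamma(I)$ directly from the definition of the local $\vn$-number.

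To compute $(I:f)$, we use that $I$ is radical, so $I=\bigcap_{\qq\in\min(I)}\qq$. Hence
\[
(I:f)=\bigcap_{\qq\in\min(I)}(\qq:f)=\bigcap_{\qq\in\min(I),\,f\notin\qq}\qq .
\]
This holds because for a prime $\qq$ we have $(\qq:f)=R$ when $f\in\qq$ and $(\qq:f)=\qq$ otherwise.

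For every $\qq\in\min(I)\setminus\{\pp\}$ we have $f\in\qq$. Indeed, $T\cap S(\qq)\neq\emptyset$, so some $x_i$ with $i\in T$ lies in $\qq$, and $x_i$ divides $f$.

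On the other hand, $f\notin\pp$. Since $\pp=P_\emptyset(I)=(I:\prod_i x_i^\infty)$ is prime and proper, it contains no indeterminate. If it contained some $x_i$, then $x_i\prod_j x_j^{N}\in I$, so $1\in\pp$, which is a contradiction. Because $\pp$ is prime and $f$ is a product of indeterminates, $f\notin\pp$.

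Therefore the intersection above reduces to $\pp$ alone, so $(I:f)=\pp$. This also uses that $\pp\in\min(I)$, which follows from our standing assumption that $\pp$ is prime.

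The only delicate point is $f\notin\pp$. It relies on $\pp$ containing no indeterminates, which is exactly the saturation description of $P_\emptyset(I)$. Coordinate-saturation itself, beyond radicality and primality of $\pp$, is not needed for this inequality.
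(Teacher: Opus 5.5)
Your proof is correct and follows essentially the same route as the paper. The paper takes a monomial $x^{u}\in M$ of degree $\gamma(I)$ (equivalently, the product of the variables of a minimum transversal of $\Delta$), observes that it lies in every $\qq\in\min(I)\setminus\{\pp\}$ but not in the prime $\pp$, which contains no indeterminates, and concludes $(I:x^{u})=\pp$; you additionally make explicit the colon computation $(I:f)=\bigcap_{f\notin\qq}\qq$ via radicality and the reason $P_{\emptyset}(I)$ contains no indeterminates, both of which the paper leaves implicit.
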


\begin{proof}
Let $x^{u}\in M$ be such that $\deg(x^{u})=\gamma(I)$. Note that, by definition, $x^{u}\in \cap_{\qq\in \min(I)\setminus \{\pp\}}\qq$. Moreover,  $x^{u}\notin \pp$, because $\pp$ is prime and contains no indeterminates. Hence $(I:x^{u})=\pp$, which implies
\[\vpp=\min\{\deg(g): (I:g)=\pp\}\leq \deg(x^{u})=\gamma(I).\]
\end{proof}

We now focus on proving the other inequality, under the assumption that $\KK$ is algebraically closed.

\begin{proposition}\label{prop: I+M rad}
Let $I$ be a coordinate-saturated ideal. If $\KK$ is algebraically closed, then $(I:\pp)=\sqrt{I+M}$.
\end{proposition}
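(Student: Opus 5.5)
We have to show $(I:\pp)=\bigcap_{\qq\in\min(I)\setminus\{\pp\}}\qq$ equals $\sqrt{I+M}$. Since $I$ is radical, $(I:\pp)$ is the intersection of the minimal primes other than $\pp$, hence radical. Since $I\subseteq(I:\pp)$ and $M\subseteq(I:\pp)$ by definition, we get $\sqrt{I+M}\subseteq(I:\pp)$. The work is the reverse inclusion. We will prove it geometrically with the Nullstellensatz, which is where algebraic closedness of $\KK$ enters: it suffices to show $\VV(I+M)\subseteq \VV((I:\pp))=\bigcup_{\qq\neq\pp}\VV(\qq)$.

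So let $a\in \VV(I)\cap\VV(M)$, and put $S=\{i: a_i=0\}$. First, $S$ is a transversal of $\Delta$. Indeed, by \eqref{M and transversals}, the monomial $\prod_{i\notin S}x_i$ lies in $M$ unless $[n]\setminus S$ is not a transversal. If it did lie in $M$, it would vanish at $a$, contradicting $a_i\neq 0$ for $i\notin S$. Hence some $S(\qq_0)$, with $\qq_0\in\min(I)\setminus\{\pp\}$, satisfies $S(\qq_0)\subseteq S$. (Equivalently, $[n]\setminus S$ misses an edge.)

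Next we want to show $a\in\VV(\qq)$ for some minimal prime $\qq\neq\pp$. Since $a\in\VV(I)$, the point lies on some minimal prime component $\VV(\qq)$ of $I$. If $\qq\neq\pp$ we are done, so the hard case is $a\in\VV(\pp)$. We will use the coordinate-saturated structure. Let $T=S(\qq_0)\subseteq S$. The point $a$ lies in $\VV(I+(x_i:i\in T))$. The minimal prime $\qq_0=P_T(I)$ is, by \Cref{expression of PSI}, equal to $((I\cap\KK[x_i:i\notin T]):\prod_{i\notin T}x_i)+(x_i:i\in T)$.

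The plan is to consider the point $a$ restricted to the coordinates outside $T$ and approximate it. By \eqref{second cond coord saturated}, $I+(x_i:i\in T)$ and $I$ have the same contraction to $\KK[x_i:i\notin T]$. So the projection $a'$ of $a$ onto the coordinates outside $T$ lies in $\VV(I\cap\KK[x_i:i\notin T])$. We would like $a'$ to be in the closure of the part where all coordinates outside $T$ are nonzero, which is $\VV(\text{saturation})$, i.e. $\VV(\qq_0)$ restricted.

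This is the main obstacle, since $a$ may have further zero coordinates in $S\setminus T$. To handle it, I would instead choose $T$ to be maximal: pick $\qq\in\min(I)\setminus\{\pp\}$ minimizing over primes with $a\in\VV(I+(x_i:i\in S(\qq)))$. Alternatively, I would argue directly. Since $a\in\VV(I+(x_i:i\in S))$, $a$ lies on some minimal prime $Q$ of $I+(x_i:i\in S)$. The ideal $(Q:\prod_{i\notin S}x_i^\infty)$ contains $P_S(I)$-type data. Then I would show that some minimal prime $\qq$ of $I$ with $\qq\subseteq Q$ has $S(\qq)\subseteq S$ nonempty, using that $a_i\ne0$ off $S$ and that $\pp$ contains no variables. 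Then $a\in \VV(Q)\subseteq\VV(\qq)$, with $\qq\neq\pp$ whenever $\VV(Q)\not\subseteq \VV(\pp)$.

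The remaining case $\VV(Q)\subseteq\VV(\pp)$ would be ruled out, or handled, by the transversal property above. Concretely, the generic point of the component through $a$ has zero set exactly $S$. Here $S$ contains the edge $S(\qq_0)$, so $a$ lies in the closure of the stratum $\{$zero set $=S\}\cap\VV(I)$. By \eqref{second cond coord saturated} and \Cref{expression of PSI}, that stratum lies in $\VV(P_{S'}(I))$ for minimal primes with $S'\subseteq S$, and not all of them can be $\pp$ since $S\neq\emptyset$ forces a variable. I expect making this stratification argument rigorous to be the crux.
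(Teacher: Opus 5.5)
Your setup matches the paper's. The easy inclusion, the Nullstellensatz reduction to $\VV(I+M)\subseteq\bigcup_{\qq\neq\pp}\VV(\qq)$, the case split on whether $a\in\VV(\pp)$, and the use of the monomial $\prod_{i\notin S}x_i$ to find $\qq_0\in\min(I)\setminus\{\pp\}$ with $S(\qq_0)\subseteq S$ are all correct. (Strictly, what you prove is that $S$ \emph{contains} an edge of $\Delta$, not that $S$ is a transversal.)

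But the proposal stops short of a proof. The approximation step for the projection $a'$, the ``maximal $T$'' idea and the stratification sketch are all left open. The final claim, that not all relevant components can be $\VV(\pp)$ ``since $S\neq\emptyset$ forces a variable'', is not a valid argument. $\VV(\pp)$ can contain points with zero coordinates, and excluding that $a$ lies only on $\VV(\pp)$ is exactly what has to be proved.

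The missing idea is that in the hard case you never use the hypothesis $a\in\VV(\pp)$; with it, no approximation is needed. Since $I$ is radical, $\pp=P_\emptyset(I)=(I:\prod_{i\in[n]}x_i^\infty)=(I:\prod_{i\in[n]}x_i)$. Put $T=S(\qq_0)$. If $g\prod_{i\notin T}x_i\in I\cap\KK[x_i:i\notin T]$, then $g\prod_{i\in[n]}x_i\in I$, so $g\in\pp$. Hence Lemma~\ref{expression of PSI} gives
\[
\qq_0=\Big((I\cap\KK[x_i:i\notin T]):\prod_{i\notin T}x_i\Big)+(x_i:i\in T)\subseteq \pp+(x_i:i\in T).
\]
As $a\in\VV(\pp)$ and $a_i=0$ for all $i\in T\subseteq S$, this gives $a\in\VV(\qq_0)$ with $\qq_0\neq\pp$, which finishes the proof; this is exactly how the paper concludes.

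In particular, the extra zeros of $a$ in $S\setminus T$, which you flag as the main obstacle, play no role. Your route stalls because you try to place $a'$ in the closure of the torus part of $\VV(I\cap\KK[x_i:i\notin T])$ without invoking $a\in\VV(\pp)$.
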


\begin{proof}
It is clear that $I\subset (I:\pp)$. Moreover, $M\subset \cap_{\qq\in \min(I)\setminus \{\pp\}}\qq=(I:\pp)$, and since $(I:\pp)$ is radical, the inclusion $\sqrt{I+M}\subseteq (I:\pp)$ follows.

To show the other inclusion, by Hilbert's Nullstellensatz, it suffices to prove the corresponding inclusion of varieties:
\begin{equation}\label{varieties}
\VV(I+M)\subseteq \closure{\VV(I)\setminus \VV(\pp)}=\bigcup_{\qq\in \min(I)\setminus \{\pp\}}\VV(\qq).
\end{equation}
Let $\vf=(v_{1},\ldots,v_{n})\in \KK^{n}$ be a point in $\VV(I+M)$. If $\vf\notin \VV(\pp)$, then $\vf\in \closure{\VV(I)\setminus \VV(\pp)}$ and there is nothing to prove. We may therefore assume that $\vf\in \VV(\pp)$. We claim that there exists some $\qq\in \min(I)\setminus \{\pp\}$ such that $v_{i}=0$ for all $i\in S(\qq)$. Assume, for contradiction, that this does not hold. Then, for each $\qq\in \min(I)\setminus \{\pp\}$, we can choose $i_{\qq}\in S(\qq)$ such that $v_{i_{\qq}}\neq 0$. If we define
\[T:=\{i_{\qq}: \qq\in \min(I)\setminus \{\pp\}\},\]
then we see that this set contains at least one element from each $S(\qq)$, and hence is a transversal of $\Delta$, the hypergraph from~\Cref{hypergraph delta}. Hence, by~\Cref{M and transversals}, it follows that $\prod_{i\in T}x_{i}\in M$. However, by construction, we have $v_{i}\neq 0$ for all $i\in T$, and hence $\vf\notin \VV(\prod_{i\in T}x_{i})$, which contradicts that $\vf\in \VV(M)$. Hence, there exists some $\qq\in \min(I)\setminus \{\pp\}$ such that $v_{i}=0$ for all $i\in S(\qq)$, or equivalently $\vf\in \VV((x_{i}:i\in S(\qq)))$. We fix such $\qq$. By Lemma~\ref{expression of PSI}, we have
\[
\begin{aligned}
\qq&=\Big((I\cap \KK[x_i:i\notin S(\qq)]): \prod_{i\notin S(\qq)}x_{i}\Big)+(x_{i}:i\in S(\qq)) \\
&\subset \Big(I: \prod_{i\in [n]}x_{i}\Big)+(x_{i}:i\in S(\qq))\\
&=\pp+(x_{i}:i\in S(\qq)).
\end{aligned}
\]

Hence $\qq\subset \pp+(x_{i}:i\in S(\qq))$, and
since $\vf \in \VV(\pp)$ and $\vf \in \VV((x_{i}:i\in S(\qq)))$, it follows that $\vf\in \VV(\qq)$. This shows that $\vf$ belongs to the variety on the right-hand side of~\Cref{varieties}, proving the inclusion in~\Cref{varieties}, as desired.
\end{proof}

We are now ready to prove the main result of this section.

\begin{theorem}\label{thm: main theorem coordinate-saturated ideals}
Let $I$ be a coordinate-saturated ideal, and suppose that $I$ has a unique minimal prime containing no indeterminates. Then this prime is necessarily
\[
\pp=\Big(I:\prod_{i\in [n]}x_i \Big).
\]
Moreover, $\vpp\leq \gamma(I),$ with equality whenever $I+M$ is radical and $\KK$ is algebraically closed. In particular, the equality holds whenever $I$ is binomial.
\end{theorem}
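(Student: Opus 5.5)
First I identify the prime. By \Cref{def: coordinate-saturated}, every minimal prime of $I$ has the form $P_S(I)$. The unique minimal prime without indeterminates must be $P_S(I)$ with $S=S(\pp)=\emptyset$, so it equals $P_\emptyset(I)=(I:\prod_i x_i^\infty)$. Since $I$ is radical, $(I:u^\infty)=(I:u)$ for any squarefree monomial $u$; indeed $fu^k\in I$ gives $(fu)^k\in I$, hence $fu\in I$. Therefore $\pp=(I:\prod_{i\in[n]}x_i)$. The inequality $\vpp\le\gamma(I)$ is exactly \Cref{prop: inequality}.

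For the reverse inequality, assume $I+M$ is radical and $\KK$ is algebraically closed. Then \Cref{prop: I+M rad} gives $(I:\pp)=\sqrt{I+M}=I+M$. Take a homogeneous $g$ with $(I:g)=\pp$. Then $g\pp\subseteq I$, so $g\in(I:\pp)=I+M$. Also $g\notin I$, since otherwise $(I:g)$ would be the whole ring. Fix any monomial order $\prec$ and reduce $g$ modulo a Gröbner basis of $I$ to its normal form $g'$. This $g'$ is homogeneous of degree $\deg g$, nonzero, satisfies $g-g'\in I$, and still lies in $I+M$. Its leading monomial $\ini(g')$ lies in $\ini(I+M)$ but not in $\ini(I)$, because no term of a normal form is in $\ini(I)$.

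The key step is to show $\ini(I+M)=\ini(I)+M$. Granting it, $\ini(g')\in M$, and so $\deg g=\deg\ini(g')\ge\gamma(I)$, which gives the equality $\vpp=\gamma(I)$. This equality of initial ideals is the main obstacle: in general it needs either \Cref{prop: ini(I+M)}, whose hypothesis is that $I$ is binomial and every monomial of $I+M$ lies in $M$, or the grading condition of \Cref{prop: sum of initials and upper closed}. I do not see how to get it from radicality of $I+M$ alone.

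So for the general radical case I would try a different route that avoids initial ideals. Since $g\in I+M$, write $g=h+m$ with $h\in I$ and $m\in M$ homogeneous of degree $\deg g$. Then $(I:m)=(I:g)=\pp$, and $m\ne0$. Every monomial appearing in $m$ lies in $M$, because $M$ is a monomial ideal, and so every such monomial has degree at least $\gamma(I)$. Since $m$ is homogeneous and nonzero, $\deg g=\deg m\ge\gamma(I)$. This argument uses only $(I:\pp)=I+M$, which comes from radicality together with \Cref{prop: I+M rad}, and it removes the obstacle, so I would adopt it.

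For the binomial case, $I$ is radical and $M$ is radical (squarefree), so by \Cref{prop: I+M is radical} it remains to check that every monomial of $I+M$ lies in $M$. A monomial in $I+M$ lies in $(I:\pp)=\bigcap_{\qq\ne\pp}\qq$, and $M$ is by definition generated by all monomials in this intersection. Hence the monomial lies in $M$, so $I+M$ is radical and equality holds.
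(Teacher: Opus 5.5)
Your proof is correct. It departs from the paper only in the key lower bound $\vn_{\pp}(I)\ge\gamma(I)$.

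The paper obtains this bound from Theorem~10 of Grisalde--Reyes--Villarreal. That theorem computes a local $\vn$-number as the least degree of a $g_i$ with $(I:g_i)=\pp$, taken over a homogeneous minimal generating set of $(I:\pp)/I$. The paper then uses $(I:\pp)=I+M$ to choose these $g_i$ among the monomials of $M$.

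You argue directly instead. A homogeneous $g$ with $(I:g)=\pp$ lies in $(I:\pp)\setminus I=(I+M)\setminus I$. Removing its $I$-component in degree $\deg g$ leaves a nonzero homogeneous element of $M$, and every monomial of that element has degree at least $\gamma(I)$.

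Your route has three advantages:
\begin{itemize}
\item It is self-contained and needs only that $I$ is graded and that $(I:\pp)=I+M$ via \Cref{prop: I+M rad}. You cite that proposition correctly; the paper's proof points to \Cref{prop: I+M is radical} at this step instead.
\item It avoids the paper's slightly loose claim that the generators of $M$ form a \emph{minimal} generating set of $(I:\pp)/I$. They generate it, but a minimal subset has to be extracted.
\item The same degree-splitting argument would shorten \Cref{prop: greater or equal}: only radicality of $\II_G+I_{\Delta_G}$ would be needed, not the initial-ideal computation in \Cref{prop: radical and ini IG}.
\end{itemize}
The cited theorem, in exchange, delivers the equality in one stroke.

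Your discarded Gr\"obner paragraph is unnecessary and can be deleted. Your identification of $\pp$ with $(I:\prod_i x_i)$ and your treatment of the binomial case agree with the paper.
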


\begin{proof}
The inequality $\vpp \leq \gamma(I)$ follows from Proposition~\ref{prop: inequality}. Now, suppose that $I + M$ is radical. By \textup{\cite[Theorem~10]{grisalde2021induced}},
\[
    \vpp = \min\{\deg(g_{i}) : i \in [r],\ (I:g_{i}) = \pp\},
\]
where $\mathcal{G} = \{\overline{g_{1}}, \ldots, \overline{g_{r}}\}$ is any homogeneous minimal generating set of $(I:\pp)/I$. By Proposition~\ref{prop: I+M is radical}, $(I:\pp) = \sqrt{I+M} = I+M$, so the generators of $M$ form a homogeneous minimal generating set of $(I:\pp)/I$, and therefore
\[
    \vpp = \min\{\deg(u) : u \in M\} = \gamma(I). \qedhere
\]
We now show that if $I$ is binomial, then $I+M$ is radical. We first show that any monomial contained in $I+M$ must belong to $M$. Indeed, if $u\in I+M$ is a monomial, we have
\[u\in I+M=\bigcap_{\qq\in \min(I)}\qq+M\subset \bigcap_{\qq\in \min(I)}\qq+ \bigcap_{\qq\in \min(I)\setminus \{\pp\}}\qq\subset \bigcap_{\qq\in \min(I)\setminus \{\pp\}}\qq. \]
Since $u$ is a monomial in $\cap_{\qq\in \min(I)\setminus \{\pp\}}\qq$, it follows from definition that $u\in M$. Hence, any monomial contained in $I+M$ must belong to $M$. Then, by Proposition~\ref{prop: I+M is radical}, we obtain that $I+M$ is radical.
\end{proof}

\section{v-number of LSS ideals of forest graphs}\label{LSS ideals-forest}
In this section, we focus on computing the $\vn$-number of the LSS ideal $L_G^{\mathbb{K}}(d)$ when $G$ is a forest, $\mathbb{K}$ is an algebraically closed field, and $d \geq 3$. To this end, we apply the framework developed in~\Cref{framework coordinate saturated} to this setting. 

Throughout this section, we fix $G$ to be a forest on the vertex set $[n]$ and $\KK$ to be an algebraically closed field, and we fix $d\geq 3$. In \cite{liwski2025lov}, the author finds the irreducible components of $\OR$ and the primary decomposition of $L_G^{\mathbb{K}}(d)$ in this setting. To present these results, we first need some definitions from \cite{liwski2025lov}.

\begin{definition}\textup{\cite[Definitions~3.1, 3.3 and~3.15]{liwski2025lov}}
For $S\subseteq [n]$, set 
\[U(S):=\{\vf=(v_{1},\ldots,v_{n})\in \OR: v_{i}=0 \text{ if and only if $i\in S$}\},\]
and let $V_{S}=\closure{U(S)}\subset \KK^{nd}$ be its Zariski closure. We also denote
\[U_{G}:=U(\emptyset)=\{\vf=(v_{1},\ldots,v_{n})\in \OR: v_{i}\neq 0 \text{ for all $i\in [n]$}\}.\]
A subset $S\subseteq [n]$ is called $G$-admissible if, for every $i\in S$, $\size{N_{G}(i)\cap ([n]\setminus S)}\geq d.$
\end{definition}

We note that \textup{\cite[Definition~3.15]{liwski2025lov}} includes the additional hypothesis that, for every $i \in S$,
\begin{equation}\label{redundant}
    N_G(i) \cap ([n] \setminus S) \not\subseteq N_G(j) \cap ([n] \setminus S)
    \quad \text{for every } j \in [n] \setminus S.
\end{equation}
This condition is in fact redundant: since $G$ is a forest, it contains no $K_{2,2}$, and so we have $|N_G(i) \cap N_G(j)| \leq 1$ for all $i,j$, while $|N_G(i) \cap ([n] \setminus S)| \geq d$, and together these force~\Cref{redundant} to hold automatically. We denote
\[\mm_{S}:=\langle x_{i,j}: i\in S, j\in [d] \rangle.\]
We use $\bI(V)$ to denote the ideal associated to a variety $V$, consisting of all polynomials that vanish on $V$. Moreover, note that 
\[\bI(V_{S})=\mm_{S}+\bI(U_{G\setminus S}),\]
and we write $\pp_{S}(G)$ for this ideal.

\begin{theorem}\textup{\cite[Theorem~3.18]{liwski2025lov}}\label{thm: deco OR}
The irreducible decomposition of $\OR$ is 
\[\OR=\bigcup_{S}V_{S},\]
where the union ranges over all $G$-admissible subsets of $[n]$.
\end{theorem}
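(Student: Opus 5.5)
The plan is to stratify $\OR$ by vanishing pattern. Write
\[\OR=\bigsqcup_{S\subseteq[n]}U(S),\]
and observe that $U(S)$ is isomorphic to $U_{G\setminus S}$, placed on the coordinates indexed by $[n]\setminus S$ with zeros on $S$. This gives $\OR=\bigcup_{S\subseteq[n]}V_S$ over all subsets. The proof then has three steps: (1) each $V_S$ is irreducible; (2) each non-admissible $S$ can be discarded; (3) no two admissible components are contained in one another.

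For (1) it suffices to show that $U_H$ is irreducible for every forest $H$ when $\KK$ is algebraically closed. Since $U_H$ is a product over the connected components of $H$, we may assume $H$ is a tree. Root it at some vertex $r$ and order the vertices so that each non-root vertex $j$ comes after its parent $p(j)$. Projecting onto the coordinates of an initial segment of this order gives a tower of morphisms. The first stage is $\KK^d\setminus\{0\}$. At each later stage, the fibre over a point is $v_{p(j)}^{\perp}\setminus\{0\}$. Because $v_{p(j)}\neq 0$, this fibre is a nonempty open subset of a hyperplane, hence irreducible of dimension $d-1$. Each such map is the restriction of the trivial-looking family
\[\{(v_{p(j)},v_j): v_{p(j)}\cdot v_j=0,\ v_{p(j)},v_j\neq 0\}\to \KK^d\setminus\{0\},\]
which is locally trivial over the open sets where a fixed coordinate of $v_{p(j)}$ is nonzero. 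Therefore an induction on the number of vertices, using the fact that an open map with irreducible base and irreducible equidimensional fibres has irreducible total space, shows that $U_H$ is irreducible. Consequently $V_S=\overline{U(S)}$ is irreducible.

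For (2), suppose $S$ is not $G$-admissible. Then there is some $i\in S$ with $|N_G(i)\setminus S|<d$. Take a point $\vf\in U(S)$. The vectors $v_k$ for $k\in N_G(i)\setminus S$ span a proper subspace of $\KK^d$, so we may choose $w\neq0$ orthogonal to all of them. The neighbours of $i$ lying in $S$ have $v_k=0$, so they impose no condition. Replacing $v_i$ by $tw$ gives, for each $t\neq0$, a point of $U(S\setminus\{i\})$, and letting $t\to 0$ recovers $\vf$. Hence $V_S\subseteq V_{S\setminus\{i\}}$. Iterating this removal, every $S$ is reduced to a $G$-admissible set, so $\OR=\bigcup_{S\ \text{admissible}}V_S$.

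Step (3), irredundancy, is where I expect the real work to lie. Let $S\neq T$ both be admissible, and suppose $V_S\subseteq V_T$. If $T\not\subseteq S$, pick $i\in T\setminus S$. A generic point of $U(S)$ has $v_i\neq0$, while every point of $V_T$ has $v_i=0$, so this is impossible. Hence $T\subsetneq S$; pick $i\in S\setminus T$.

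By admissibility, $i$ has at least $d$ neighbours $k_1,\dots,k_d$ outside $S$. Because $G$ is a forest, these neighbours lie in distinct connected components of $G\setminus S$, since any path between two of them avoiding $i$ would close a cycle through $i$. By the product structure of $U_{G\setminus S}$, a generic point of $U(S)$ therefore has $v_{k_1},\dots,v_{k_d}$ linearly independent; this is a nonempty open condition on each factor. Linear independence of these vectors is an open condition on all of $\KK^{nd}$, so it defines an open neighbourhood $W$ of this generic point. For every point of $\OR\cap W$, the vector $v_i$ is orthogonal to $d$ spanning vectors and hence $v_i=0$. But $U(T)$ is dense in $V_T\supseteq V_S$, so $U(T)$ meets $W$, and its points have $v_i\neq0$ because $i\notin T$. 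This is a contradiction.

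The delicate points are the genericity claim (independence of the neighbours' vectors, which uses both the forest structure and the irreducibility from step (1)) and the irreducibility argument itself, in particular checking openness and local triviality of the fibrations.
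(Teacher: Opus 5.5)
The paper does not prove this statement; it quotes it from \cite[Theorem~3.18]{liwski2025lov}, adding only the remark that condition \eqref{redundant} is automatic for forests. There is therefore no internal proof to compare against, and your argument stands as a self-contained proof. It is correct.

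Step~(1) works as you describe. Over the open subset of the previous stage where the $c$-th coordinate of $v_{p(j)}$ is nonzero, the next stage is isomorphic to that open subset times $\KK^{d-1}\setminus\{0\}$. A space covered by pairwise-intersecting irreducible open subsets is irreducible, and equidimensionality of the fibres plays no role. Step~(2) is fine as written.

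In step~(3), two phrasings need tightening:
\begin{enumerate}
\item Linear independence of $v_{k_1},\dots,v_{k_d}$ is an open condition on the product $U_{C_1}\times\cdots\times U_{C_d}$ of the components containing the $k_a$, not on each factor. Its nonemptiness uses that $U_C\to\KK^d\setminus\{0\}$, $\vf\mapsto v_k$, is surjective for each component $C$ and each $k\in C$; this follows from your tower after rooting $C$ at $k$.
\item Neither genericity nor the irreducibility from step~(1) is needed there. A single point of $U(S)$ with $v_{k_1},\dots,v_{k_d}$ independent already gives $W\cap V_S\neq\emptyset$, and that is all the contradiction uses.
\end{enumerate}

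Compared with the citation, your route is transparent and slightly more general. It never uses \eqref{redundant}, and it needs only $d\geq 2$ rather than $d\geq 3$. For $d=2$, the admissible sets of a forest are exactly its cut sets, which is consistent with the binomial edge ideal picture behind \Cref{cor: d=2}.
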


\begin{corollary}\label{cor: primary decomposition LG}
Let $G$ be a forest graph. Then the primary decomposition of $L_G^{\mathbb{K}}(d)$ is $L_G^{\mathbb{K}}(d)=\bigcap_{S}\pp_{S}(G)$,
where the intersection is taken over all $G$-admissible subsets of $[n]$.
\end{corollary}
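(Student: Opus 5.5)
The corollary follows by combining \Cref{thm: deco OR} with the radicality of $L_G^{\KK}(d)$, and then passing from varieties to ideals via the Nullstellensatz.

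\textbf{Step 1: radicality.} First I will record that $L_G^{\KK}(d)$ is radical. For $\operatorname{char}(\KK)\neq 2$ this is \cite{hmsw15}. For forests in arbitrary characteristic over an algebraically closed field, I will instead take the radicality statement from \cite{liwski2025lov}, which is the source of the decomposition. (If a uniform argument is preferred: for a forest the generators $f_e^{(d)}$ form a complete intersection with squarefree initial terms under a suitable order, which gives radicality.)

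\textbf{Step 2: Nullstellensatz.} Since $\KK$ is algebraically closed and $L_G^{\KK}(d)$ is radical,
\[
L_G^{\KK}(d)=\bI(\OR)=\bI\Big(\bigcup_{S}V_S\Big)=\bigcap_{S}\bI(V_S)=\bigcap_S \pp_S(G),
\]
where $S$ ranges over the $G$-admissible subsets.

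\textbf{Step 3: primality and irredundancy.} Each $V_S$ is irreducible, so each $\bI(V_S)=\pp_S(G)$ is prime. Hence the displayed intersection is a primary decomposition. Because the decomposition of $\OR$ in \Cref{thm: deco OR} is the irreducible decomposition, no $V_S$ is contained in another. Therefore the primes $\pp_S(G)$ are pairwise incomparable and the decomposition is irredundant; they are exactly the minimal primes of $L_G^{\KK}(d)$.

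The only non-formal point is Step 1, radicality in characteristic $2$. I will handle it by citing \cite{liwski2025lov} rather than reproving it.
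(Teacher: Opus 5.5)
Your argument is the paper's: it passes the irreducible decomposition of \Cref{thm: deco OR} to ideals, with radicality of $L_G^{\KK}(d)$ for forests taken from \cite[Theorem~1.5]{cw19}, which is the reference you should use in Step~1 (the radicality result of \cite{hmsw15} is for $d=2$). Drop the parenthetical ``uniform argument'', though, since it is false: if some vertex $i$ has $\deg_G(i)>d$ (e.g.\ $G=K_{1,d+1}$), then $L_G^{\KK}(d)\subseteq \pp_{\{i\}}(G)$ and $\pp_{\{i\}}(G)$ has height $d+|E(G)|-\deg_G(i)<|E(G)|$, so the $f_e^{(d)}$ do not form a complete intersection.
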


\begin{proof}
This follows from passing to the ideals the decomposition in Theorem~\ref{thm: deco OR}, noting that $L_G^{\mathbb{K}}(d)$ is radical by \textup{\cite[Theorem~1.5]{cw19}}.
\end{proof}

A more explicit description of the minimal primes $\pp_{S}(G)$ is given in \textup{\cite[\S~5.1]{liwski2025lov}}. In this section, we focus on the following question.

\begin{question}
Compute the localized $\vn$-number $\textup{v}_{\pp_{\emptyset}(G)}(L_G^{\mathbb{K}}(d))$.
\end{question}

We address this question using the framework developed in~\Cref{framework coordinate saturated}, proving that the ideal $L_G^{\mathbb{K}}(d)$ is \emph{coordinate-saturated}.

\begin{lemma}\label{lem: LG is coordinate saturated}
Let $G$ be a forest graph. Then the ideal $L_G^{\mathbb{K}}(d)$ is \emph{coordinate-saturated}. Moreover, $L_G^{\mathbb{K}}(d)$ has a unique minimal prime containing no indeterminates.
\end{lemma}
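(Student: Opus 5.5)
I will verify the hypotheses of Proposition~\ref{prop: sufficient conditions for coordinate-saturated} for $I=L_G^{\KK}(d)$, working with the indeterminates $x_{i,j}$. By Corollary~\ref{cor: primary decomposition LG}, $I$ is radical and its minimal primes are the ideals $\pp_S(G)=\mm_S+\bI(U_{G\setminus S})$, with $S$ ranging over the $G$-admissible subsets. The first task is to determine $S(\qq)$ for $\qq=\pp_S(G)$. I claim that the set of indeterminates contained in $\qq$ is exactly $\{x_{i,j}: i\in S,\ j\in[d]\}$. Indeed, $U_{G\setminus S}$ consists of tuples with all $v_i\neq 0$ for $i\notin S$. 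For each such $i$ and each coordinate $j$, I expect to exhibit a point of $U_{G\setminus S}$ with $(v_i)_j\neq 0$. This can be done inductively along the forest $G\setminus S$: choose vectors leaf by leaf, using $d\geq 3$ so that the orthogonal complement of a nonzero vector is at least two-dimensional. Over an algebraically closed field one must also avoid isotropic problems, but generic choices handle this. Hence no $x_{i,j}$ with $i\notin S$ lies in $\bI(U_{G\setminus S})$, and the claim follows.

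Condition~\ref{s1} is then immediate. The map $\pi_{S(\qq)}$ kills the variables of the rows $i\in S$. It sends each generator $f^{(d)}_e$ either to itself (if $e\cap S=\emptyset$) or to $0$. Therefore $\pi_{S(\qq)}(I)=L^{\KK}_{G\setminus S}(d)\subseteq I$.

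For condition~\ref{s2}, observe that $I+\mm_S=\mm_S+L^{\KK}_{G\setminus S}(d)$, where the second summand lives in the variables of rows outside $S$. The minimal primes of this ideal containing no indeterminates $x_{i,j}$ with $i\notin S$ correspond to minimal primes of $L^{\KK}_{G\setminus S}(d)$ containing no indeterminates. Since $G\setminus S$ is again a forest, Corollary~\ref{cor: primary decomposition LG} applies to it. Combined with the first paragraph, its minimal primes containing no indeterminates are exactly those indexed by the empty admissible set. So there is a unique one, namely $\pp_\emptyset(G\setminus S)=\bI(U_{G\setminus S})$.

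The same argument with $S=\emptyset$ gives the second assertion of the lemma: $\pp_\emptyset(G)$ is the unique minimal prime of $I$ containing no indeterminates. Note that $\emptyset$ is always $G$-admissible.

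The main obstacle is the first step, showing that $\bI(U_{G\setminus S})$ contains no variable. This amounts to constructing, for each vertex $i$ and coordinate $j$, a nowhere-zero orthogonal representation of the forest with $(v_i)_j\neq0$. I would do this by rooting each tree at $i$, setting $v_i=e_j+$ (a generic vector), and then choosing each child's vector in the orthogonal complement of its parent's vector. That complement is $(d-1)\geq 2$-dimensional, so it contains nonzero vectors. If needed, I would instead appeal to the explicit description of $\pp_S(G)$ in \cite[\S5.1]{liwski2025lov}.
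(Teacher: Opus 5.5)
Your proof is correct and takes the same route as the paper. Like the paper, you verify conditions (i) and (ii) of Proposition~\ref{prop: sufficient conditions for coordinate-saturated} using three facts: $S(\pp_S(G))=S\times[d]$, the map $\pi_{S(\qq)}$ sends each $f_e^{(d)}$ either to itself or to $0$, and $L_G^{\KK}(d)+\mm_S=L_{G\setminus S}^{\KK}(d)+\mm_S$. Your extra argument that $\bI(U_{G\setminus S})$ contains no indeterminates fills in a point the paper only asserts, and it is valid with neither genericity nor $d\geq 3$ needed (only $d\geq 2$), since the orthogonal complement of any nonzero vector, isotropic or not, is a hyperplane.
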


\begin{proof}
The ideal $L_G^{\mathbb{K}}(d)$ is radical by \textup{\cite[Theorem~1.5]{cw19}}. By the description of the minimal primes of $L_G^{\mathbb{K}}(d)$ from Corollary~\ref{cor: primary decomposition LG}, it follows that the unique minimal prime of $L_G^{\mathbb{K}}(d)$ containing no indeterminates $x_{i,j}$ is $\pp_{\emptyset}(G)$. To show that $L_G^{\mathbb{K}}(d)$ is coordinate-saturated it suffices to show that conditions~\ref{s1} and~\ref{s2} from Proposition~\ref{prop: sufficient conditions for coordinate-saturated} hold. Let $\qq:=\pp_{S}(G)$ be a minimal prime of $L_G^{\mathbb{K}}(d)$. Then, with the notation of Proposition~\ref{prop: sufficient conditions for coordinate-saturated}, we have $S(\qq)=S\times [d]$. With the notation of~\Cref{fe}, for each $e:=\{i,j\}\in E(G)$, we have 
\[
\pi_{S(\qq)}(f_{e}^{(d)})=
\begin{cases}
f_{e}^{(d)} & \text{if $i,j\notin S$;}\\
0 & \text{otherwise}.
\end{cases}
\]
Hence $\pi_{S(\qq)}(I)\subseteq I$ and~\Cref{s1} holds. We now show that~\Cref{s2} holds. Note that
\[L_G^{\mathbb{K}}(d)+(x_{i,j}:(i,j)\in S(\qq))=L_G^{\mathbb{K}}(d)+(x_{i,j}:i\in S,j\in [d])=L_G^{\mathbb{K}}(d)+\mm_{S}=L_{G\setminus S}^{\mathbb{K}}(d)+\mm_{S}.\]
Since each minimal prime of $L_{G\setminus S}^{\mathbb{K}}(d)+\mm_{S}$ arises a sum of $\mm_{S}$ and a minimal prime of $L_{G\setminus S}^{\mathbb{K}}(d)$, and there is a unique minimal prime of $L_{G\setminus S}^{\mathbb{K}}(d)$ containing no indeterminates, it follows that there exists a unique minimal prime of $L_{G\setminus S}^{\mathbb{K}}(d)+\mm_{S}$ containing no indeterminates $\{x_{i,j}:i\notin S\}$. Hence,~\Cref{s2} holds, and $L_G^{\mathbb{K}}(d)$ is coordinate-saturated.
\end{proof}

\begin{notation}
We denote $V_{\geq d}(G):=\{i\in [n]: \deg_{G}(i)\geq d\}\quad \text{and} \quad v_{d}(G):=\size{V_{\geq d}(G)}.$
We define \[I_{\geq d}=\bigcap_{i\in V_{\geq d}(G)}\mm_{\{i\}} \subseteq \mathbb{K}[x_{ij}: i \in [n], j \in [d]].\] Observe that the associated variety is $\VV(I_{\geq d})=\bigcup_{i\in V_{\geq d}(G)}\{(v_{1},\ldots,v_{n})\in (\KK^{d})^{n}: v_{i}=0\}.$
\end{notation}

Our goal in what follows is to prove that $\vp=v_{d}(G)$.

\begin{lemma}\label{lem: LG M and gamma}
Let $G$ be a forest graph. Then the ideal $I_{\geq d}$ and the invariant $v_d(G)$ coincide with the ideal $M$ and the invariant $\gamma(L_G^{\mathbb{K}}(d))$ from Definition~\ref{ideal M and gamma}, respectively.
\end{lemma}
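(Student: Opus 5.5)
By \Cref{M and transversals} and \Cref{gamma and transversals}, $M$ is generated by the monomials whose support is a transversal of the hypergraph $\Delta$, and $\gamma$ is the minimum transversal size. So the lemma reduces to identifying $\Delta$, that is, the sets $S(\qq)$ for $\qq\in\min(L_G^{\KK}(d))\setminus\{\pp_{\emptyset}(G)\}$. By \Cref{cor: primary decomposition LG} and the proof of \Cref{lem: LG is coordinate saturated}, these minimal primes are $\pp_{S}(G)$ with $S\neq\emptyset$ $G$-admissible, and $S(\pp_S(G))=S\times[d]$. This holds because $\pp_S(G)=\mm_S+\bI(U_{G\setminus S})$, and the second summand contains no indeterminates, since $U_{G\setminus S}$ contains points with all coordinates nonzero (e.g.\ using isotropic or generic vectors, $\KK$ being algebraically closed, $d\ge 3$). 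Hence $E(\Delta)=\{S\times[d]: \emptyset\neq S \text{ $G$-admissible}\}$.

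The key claim is that the minimal edges of $\Delta$, i.e.\ the minimal nonempty $G$-admissible sets, are exactly the singletons $\{i\}$ with $\deg_G(i)\ge d$. First, $\{i\}$ is $G$-admissible if and only if $\size{N_G(i)\setminus\{i\}}=\deg_G(i)\ge d$. Second, every nonempty $G$-admissible $S$ contains such a vertex: any $i\in S$ satisfies $\deg_G(i)\ge\size{N_G(i)\cap([n]\setminus S)}\ge d$. A set $T$ of coordinates therefore meets every edge of $\Delta$ if and only if it meets every $\{i\}\times[d]$ with $i\in V_{\ge d}(G)$. Passing to monomials, $x^u\in M$ if and only if for each $i\in V_{\ge d}(G)$ some $x_{i,j}$ divides $x^u$, which is exactly $x^u\in\bigcap_{i\in V_{\ge d}(G)}\mm_{\{i\}}=I_{\geq d}$. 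Since $I_{\geq d}$ is a monomial ideal, this gives $M=I_{\geq d}$.

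For $\gamma$, a minimum transversal picks one coordinate $(i,j)$ for each $i\in V_{\ge d}(G)$, and these choices are distinct because the blocks $\{i\}\times[d]$ are disjoint. So $\gamma(L_G^{\KK}(d))=v_d(G)$. If $V_{\ge d}(G)=\emptyset$, then $\pp_\emptyset(G)$ is the only minimal prime, and the conventions give $M=(1)=I_{\geq d}$ and $\gamma=0$.

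The only step needing care is that $\bI(U_{G\setminus S})$ contains no variable $x_{i,j}$ with $i\notin S$. I would verify this directly by constructing a point of $U_{G\setminus S}$ with all coordinates nonzero. Since $G\setminus S$ is a forest, we can root each tree and assign vectors top-down: each child needs a vector orthogonal to its parent's vector and with all coordinates nonzero. Such a vector exists over an algebraically closed field for $d\ge 2$ (in fact for $d\ge3$ this is easy), because the orthogonal hyperplane is not contained in the union of the coordinate hyperplanes.
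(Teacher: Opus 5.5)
Your proof is correct and follows the paper's route. Like the paper, you note that every nonempty $G$-admissible $S$ lies in $V_{\geq d}(G)$ while each singleton $\{i\}$ with $i\in V_{\geq d}(G)$ is admissible, and then read off $M=I_{\geq d}$ and $\gamma=v_d(G)$ from the transversal descriptions of $M$ and $\gamma$. Beyond the paper, you verify $S(\pp_S(G))=S\times[d]$ by exhibiting a point of $U_{G\setminus S}$ with all coordinates nonzero, and you handle transversals at the level of coordinates rather than vertices; the paper leaves both points implicit.
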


\begin{proof}
By definition, $\{i\}$ is an admissible subset for each $i\in V_{\geq d}(G)$. Hence, with the notation of Definition~\ref{ideal M and gamma}, it follows that
\[S(\pp_{\{i\}}(G))=\{i\}.\]
Moreover, $S(\qq)\subseteq V_{\geq d}(G)$ for all $\qq\in \min(L_G^{\mathbb{K}}(d))$. Hence, the unique minimal transversal of the hypergraph 
\[\{S(\qq):\qq\in \min(L_G^{\mathbb{K}}(d))\setminus \{\pp_{\emptyset}(G)\}\}\]
is $V_{\geq d}(G)$. Both claims follow from this along with~\Cref{M and transversals} and~\Cref{gamma and transversals}.
\end{proof}

We now compute the initial ideal of $L_G^{\mathbb{K}}(d)+I_{\geq d}$ and show that it is radical. 

\begin{proposition}\label{prop: initial ideal and radical}
Let $G$ be a forest graph. Then $\ini(L_G^{\mathbb{K}}(d)+I_{\geq d})=\ini(L_G^{\mathbb{K}}(d))+I_{\geq d}$, for any monomial order $\prec$.
Moreover, $L_G^{\mathbb{K}}(d)+I_{\geq d}$ is radical.
\end{proposition}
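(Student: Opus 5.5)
For the initial ideal statement I will apply Proposition~\ref{prop: sums of initials and upper closed}, which is Proposition~\ref{prop: sum of initials and upper closed}, with $I=L_G^{\mathbb{K}}(d)$ and $M=I_{\geq d}$. This needs a monomial map $\phi$ making $L_G^{\mathbb{K}}(d)$ homogeneous for the induced multigrading. For $M$, the induced order must satisfy: if $u\in M$ and $\deg(u)\le\deg(v)$, then $v\in M$.

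The natural choice is $\phi(x_{i,j})=y_i$, i.e.\ $\deg(x_{i,j})=e_i\in\ZZ^n$. Each generator $f_e^{(d)}=\sum_k x_{i,k}x_{j,k}$ has every term of degree $e_i+e_j$, so $L_G^{\mathbb{K}}(d)$ is homogeneous.

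A monomial $u$ lies in $I_{\geq d}=\bigcap_{i\in V_{\geq d}(G)}\mm_{\{i\}}$ exactly when, for every $i\in V_{\geq d}(G)$, $u$ involves some $x_{i,j}$. This is exactly the condition that the $i$-th coordinate of $\deg(u)$ is positive for all $i\in V_{\geq d}(G)$. That condition is clearly preserved when $\deg(v)\ge\deg(u)$ componentwise. Proposition~\ref{prop: sum of initials and upper closed} therefore gives $\ini(L_G^{\mathbb{K}}(d)+I_{\geq d})=\ini(L_G^{\mathbb{K}}(d))+I_{\geq d}$ for every monomial order.

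For radicality I will use the standard fact that if some initial ideal is radical (squarefree), then the ideal itself is radical. So it suffices to find one monomial order $\prec$ with $\ini(L_G^{\mathbb{K}}(d))$ squarefree. Then $\ini(L_G^{\mathbb{K}}(d))+I_{\geq d}$ is a sum of squarefree monomial ideals, hence squarefree and radical, and $L_G^{\mathbb{K}}(d)+I_{\geq d}$ is radical.

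The main obstacle is producing this squarefree initial ideal for forests. I would first try to show that for a forest the generators $f_e^{(d)}$ form a Gröbner basis under a suitable order, with pairwise coprime leading terms. Root each tree component and orient each edge $e=\{i,j\}$ with $j$ the parent of $i$. Choose a lex-type order in which the leading term of $f_e^{(d)}$ is $x_{i,1}x_{j,2}$-like, with index distinct per child, or more simply $x_{i,1}x_{j,1}$ with careful ordering. The difficulty is that a vertex with many children forces leading terms to share variables of the parent. If $d$ exceeds the degree one can assign distinct column indices to the edges at each vertex, but in general that is not possible.

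If this fails, I would fall back on known results. The paper \cite{cw19} shows that $L_G^{\mathbb{K}}(d)$ is a radical complete intersection for forests when $d\geq 3$, and the proof there (via \cite{hmsw15}) establishes a squarefree initial ideal: for $d\ge 3$ and forests, the initial ideal with respect to a suitable order is generated by the squarefree monomials $x_{i,k}x_{j,k}$ chosen to be pairwise coprime, and such a choice exists on forests. I would cite that result for the squarefree Gröbner degeneration. Since $\ini(L_G^{\mathbb{K}}(d))$ is generated in degree two by squarefree monomials, it is squarefree regardless of coprimality, which finishes the proof.
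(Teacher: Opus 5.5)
Your first half is correct and matches the paper's argument. With $\deg(x_{i,j})=e_i$ the ideal $L_G^{\KK}(d)$ is $\ZZ^n$-homogeneous, and $I_{\geq d}$ is upward closed, so Proposition~\ref{prop: sum of initials and upper closed} applies. Reducing radicality to finding one order with $\ini(L_G^{\KK}(d))$ squarefree is also the paper's route.

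\textbf{The gap.} It lies in how you obtain that squarefree initial ideal. The facts you attribute to \cite{cw19} are false whenever $G$ has a vertex $c$ of degree $m>d$, which the proposition allows.

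\emph{Not a complete intersection.} For the star $K_{1,m}$ with $m>d$, the linear space $\{v_c=0\}$ has dimension $md$. This exceeds the dimension $d+m(d-1)$ of the closure of the locus $v_c\neq 0$. So $\{v_c=0\}$ is a component of codimension $d<m$, and $L_G^{\KK}(d)$ is not a complete intersection.

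\emph{No coprime leading terms.} The leading terms $x_{c,k}x_{i,k}$ of the $m$ generators at $c$ cannot be pairwise coprime, since there are only $d$ choices of $k$.

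\emph{Not generated in degree two, for any order.} Suppose $\ini(f^{(d)}_{\{c,1\}})=x_{c,k}x_{1,k}$ and $\ini(f^{(d)}_{\{c,2\}})=x_{c,k}x_{2,k}$. Then
\[
x_{2,k}f^{(d)}_{\{c,1\}}-x_{1,k}f^{(d)}_{\{c,2\}}=\sum_{l\neq k}x_{c,l}\bigl(x_{1,l}x_{2,k}-x_{1,k}x_{2,l}\bigr)
\]
is a nonzero element of $L_G^{\KK}(d)$. None of its terms is divisible by any $\ini(f^{(d)}_e)$. Because the $f^{(d)}_e$ have pairwise disjoint supports, the $\ini(f^{(d)}_e)$ are the only quadratic monomials in $\ini(L_G^{\KK}(d))$. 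So $\ini(L_G^{\KK}(d))$ has a cubic minimal generator, and your closing sentence fails. Also, \cite{hmsw15} treats $d=2$ over $\RR$ and is not relevant here.

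\textbf{The fix.} Use the ingredient the paper uses: by \cite[Theorem~6.1]{cw19}, $L_G^{\KK}(d)$ is Cartwright--Sturmfels for every forest $G$. Being Cartwright--Sturmfels depends only on the multigraded Hilbert series, so it passes to every initial ideal. Cartwright--Sturmfels ideals are radical, so every initial ideal is squarefree, even though, as shown above, it need not be generated in degree two. Put this in place of your last paragraph. Then $\ini(L_G^{\KK}(d))+I_{\geq d}$ is squarefree, and the rest of your argument gives radicality of $L_G^{\KK}(d)+I_{\geq d}$.
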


\begin{proof}
First note that the polynomial ring $\KK[x_{ij}: i \in [n], j \in [d]]$ has a $\ZZ^{n}$-multigrading given by 
\begin{equation}\label{multi grading}\deg(x_{i,j})=e_{i}\in \ZZ^{n},
\end{equation}
under which the ideal $L_G^{\mathbb{K}}(d)$ is homogeneous. Moreover, with this grading, the ideal $I_{\geq d}$ can be described as 
\[I_{\geq d}=\{u: \text{$u$ is a monomial with $\deg(u)\geq e_{\vg}$}\},\]
where $e_{\vg}$ is the indicator vector of $\vg$. Hence, whenever $u$ and $v$ are monomials with $u\in I_{\geq d}$ and $\deg(u)\leq \deg(v)$, then $v\in I_{\geq d}$. Thus, we can apply Proposition~\ref{prop: sum of initials and upper closed}, which gives that $\ini(L_G^{\mathbb{K}}(d)+I_{\geq d})=\ini(L_G^{\mathbb{K}}(d))+I_{\geq d}$, for any monomial order $\prec$. 

On the other hand, by \textup{\cite[Theorem~6.1]{cw19}}, the ideal $L_G^{\mathbb{K}}(d)$ is Cartwright--Sturmfels, so in particular $\ini(L_G^{\mathbb{K}}(d)$ is square-free, and hence $\ini(L_G^{\mathbb{K}}(d)+I_{\geq d})$ is square-free as well. It follows that $L_G^{\mathbb{K}}(d)+I_{\geq d}$ is radical.
\end{proof}

We now prove the main result of this section.

\begin{theorem}\label{thm:local v-number LSS ideals}
Let $G$ be a forest graph and $d \geq 3$. Then the localized $\vn$-number $\vp$ coincides with $\vd$, that is, the number of vertices of $G$ of degree at least $d$.
\end{theorem}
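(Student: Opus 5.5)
The plan is to apply Theorem~\ref{thm: main theorem coordinate-saturated ideals} to $I=L_G^{\mathbb{K}}(d)$ with $\pp=\pp_{\emptyset}(G)$. The hypotheses are already in place. By Lemma~\ref{lem: LG is coordinate saturated}, $L_G^{\mathbb{K}}(d)$ is coordinate-saturated and has a unique minimal prime containing no indeterminates, namely $\pp_{\emptyset}(G)$. We have fixed $\KK$ algebraically closed. Lemma~\ref{lem: LG M and gamma} identifies the ideal $M$ of Definition~\ref{ideal M and gamma} with $I_{\geq d}$, and $\gamma(L_G^{\mathbb{K}}(d))$ with $v_d(G)$. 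Proposition~\ref{prop: inequality} then gives the upper bound $\vp\leq v_d(G)$ directly.

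For equality I would invoke the radical case of Theorem~\ref{thm: main theorem coordinate-saturated ideals}. That requires $I+M=L_G^{\mathbb{K}}(d)+I_{\geq d}$ to be radical, and this is exactly the second assertion of Proposition~\ref{prop: initial ideal and radical}. The argument behind that proposition runs as follows. Under the $\ZZ^n$-grading $\deg x_{i,j}=e_i$, the ideal $L_G^{\mathbb{K}}(d)$ is homogeneous and $I_{\geq d}$ is upward closed, so Proposition~\ref{prop: sum of initials and upper closed} yields $\ini(I+M)=\ini(I)+I_{\geq d}$. Both summands are squarefree, using the Cartwright--Sturmfels property of $L_G^{\mathbb{K}}(d)$. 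Hence $\ini(I+M)$ is squarefree, and therefore $I+M$ is radical.

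With radicality in hand, Proposition~\ref{prop: I+M rad} gives $(I:\pp)=\sqrt{I+M}=I+M$. Here the hypothesis $\KK$ algebraically closed is used, via the Nullstellensatz. By \cite[Theorem~10]{grisalde2021induced}, $\vp$ is the minimal degree of an element $g$ in a homogeneous minimal generating set of $(I:\pp)/I$ with $(I:g)=\pp$. The generating set here may be taken to consist of squarefree monomials generating $I_{\geq d}$.

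One point needs care. The paper's proof of the main theorem reads off the minimum as $\min\{\deg u: u\in M\}$, but $M$ need not be generated minimally modulo $I$ by monomials alone, and a priori a non-monomial generator of smaller degree could appear. I would handle this through the $\ZZ^n$-grading. Any homogeneous $g\in (I:\pp)\setminus I$ with $(I:g)=\pp$ can be replaced by a multihomogeneous component $g_a\notin I$ with $(I:g_a)=\pp$ and $\deg g_a\le \deg g$; this works because $\pp$ and $I$ are multihomogeneous. Next, $g_a\in I+I_{\geq d}$ and $g_a\notin I$. The Gröbner description $\ini(I+M)=\ini(I)+I_{\geq d}$ then shows that the normal form of $g_a$ modulo $I$ has a monomial in $I_{\geq d}$. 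That monomial has multidegree $a\ge e_{V_{\geq d}(G)}$, and so $\deg g_a\geq v_d(G)$.

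I expect this last step to be the main obstacle: showing that no polynomial of degree less than $v_d(G)$ can lie in $(I:\pp)\setminus I$. The multigrading argument is what I would try first. It gives $\vp\ge v_d(G)$, and combined with the upper bound this yields $\vp=v_d(G)$.
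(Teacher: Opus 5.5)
Your proposal is correct and is essentially the paper's proof: coordinate-saturation (Lemma~\ref{lem: LG is coordinate saturated}), the identification $M=I_{\geq d}$ and $\gamma=v_d(G)$ (Lemma~\ref{lem: LG M and gamma}), radicality of $L_G^{\mathbb{K}}(d)+I_{\geq d}$ (Proposition~\ref{prop: initial ideal and radical}), and then Theorem~\ref{thm: main theorem coordinate-saturated ideals}. The step you flag as the main obstacle is in fact immediate, so your multigrading and normal-form argument, though valid, is unnecessary: writing $I=L_G^{\mathbb{K}}(d)$, we have $(I:\pp)=I+I_{\geq d}$ with $I$ homogeneous and $I_{\geq d}$ generated in degree $v_d(G)$, so every homogeneous $f\in(I:\pp)$ of degree $t<v_d(G)$ lies in $I_t+(I_{\geq d})_t=I_t$, and hence no such $f$ can satisfy $(I:f)=\pp$.
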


\begin{proof}
By Lemma~\ref{lem: LG is coordinate saturated}, $L_G^{\mathbb{K}}(d)$ is \emph{coordinate-saturated} and $\pp_{\emptyset}(G)$ is its unique minimal prime containing no indeterminates. Moreover, by Lemma~\ref{lem: LG M and gamma}, we have that the ideal $I_{\geq d}$ and the invariant $v_d(G)$ coincide with the ideal $M$ and the invariant $\gamma(L_G^{\mathbb{K}}(d))$ from Definition~\ref{ideal M and gamma}. Furthermore, by Proposition~\ref{prop: initial ideal and radical} we know that $L_G^{\mathbb{K}}(d)+I_{\geq d}$ is radical. We can then apply Theorem~\ref{thm: main theorem coordinate-saturated ideals}, which gives the desired result.
\end{proof}

Throughout this section, we were assuming that $d\geq 3$ to be able to use the results of \cite{liwski2025lov}. Below we treat the case $d=2$.

\begin{proposition}\label{cor: d=2}
Let $G$ be a forest graph. Then $\vn_{\mathfrak{p}_{\emptyset}(G)}(L_G^{\mathbb{K}}(2))=v_{2}(G)=\size{\iv(G)},$
where $\iv(G)$ is the set of vertices which are neither leaves nor isolated.
\end{proposition}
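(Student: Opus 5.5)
The plan is to handle $d=2$ through the parity binomial edge ideal. For $d=2$ the results of \cite{liwski2025lov} are not available, but a forest is bipartite, so by \cite[Corollary~6.2]{bms18} the ideal $L_G^{\KK}(2)$ is equivalent to the parity binomial edge ideal $\II_G$, and hence to the binomial edge ideal $J_G$. Concretely, there is a linear change of coordinates, using $\sqrt{-1}\in\KK$ and a sign/swap on one side of the bipartition, which carries $L_G^{\KK}(2)$ onto $J_G$. Each block $(x_{i,1},x_{i,2})$ is mapped to a block spanned by $(x_i,y_i)$. Consequently, ideals of the form $\mm_S$ are preserved, and the prime $\pe$ corresponds to the unique minimal prime $P_\emptyset(G)$ of $J_G$ containing no variables. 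Since local $\vn$-numbers are invariant under graded automorphisms, it suffices to compute $\vn_{P_\emptyset(G)}(J_G)$.

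Next I apply Theorem~\ref{thm: main theorem coordinate-saturated ideals} to $J_G$, which is binomial. The minimal primes of $J_G$ are $P_S(G)=(x_i,y_i:i\in S)+J_{\tilde G_{[n]\setminus S}}$, where $S$ ranges over the cut-point sets. Condition~\ref{s1} of Proposition~\ref{prop: sufficient conditions for coordinate-saturated} is immediate: killing the variables indexed by $S$ sends each generator either to itself or to $0$. Condition~\ref{s2} holds because $J_G+\mm_S=J_{G\setminus S}+\mm_S$, and $J_{G\setminus S}$ has the unique variable-free minimal prime $P_\emptyset(G\setminus S)$. So $J_G$ is coordinate-saturated, and the equality $\vn_{\pp}(J_G)=\gamma(J_G)$ holds with no extra radicality check, since $J_G$ is binomial. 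I expect the rest of the argument to be straightforward.

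It then remains to compute $\gamma$ through the transversal description~\eqref{gamma and transversals}. The edges of $\Delta$ are the sets $S\times\{x,y\}$ for the nonempty cut sets $S$. For a forest, every vertex $i\in\iv(G)$ is a cut vertex, since removing it increases the number of components. Hence $\{i\}$ is a cut set and $\{x_i,y_i\}$ is an edge of $\Delta$. Conversely, any nonempty cut set consists of non-leaf, non-isolated vertices, because removing a leaf or an isolated vertex never increases the number of components. So every edge of $\Delta$ contains some block $\{x_i,y_i\}$ with $i\in\iv(G)$. A minimum transversal therefore picks exactly one variable from each such block, and $\gamma=|\iv(G)|$.

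Finally, vertices of degree at least $2$ in a forest are exactly those that are neither leaves nor isolated, so $|\iv(G)|=v_2(G)$. The main obstacle is making the equivalence with $\II_G$ or $J_G$ precise: I need to check that the transformation respects the vertex blocks, and hence matches $\pe$ with $P_\emptyset(G)$. If $\sqrt{-1}$ is unavailable this step breaks, but $\KK$ is algebraically closed here.
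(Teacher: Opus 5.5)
Your proof is correct. Its first step is the same as the paper's: you transport the problem to $J_G$ by a block-preserving change of coordinates that sends $\pe$ to the variable-free minimal prime.

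The routes diverge after that.
\begin{itemize}
\item The paper simply cites the known value of the localized $\vn$-number of a binomial edge ideal at that prime, namely the connected-domination result of Jaramillo-Velez--Seccia.
\item You stay inside the paper's own framework. You check that $J_G$ is coordinate-saturated, which is the case $m=2$ of \Cref{lem: Jkm is coordinate saturated}. You then apply \Cref{thm: main theorem coordinate-saturated ideals}, which is legitimate since $J_G$ is binomial and $\KK$ is algebraically closed.
\item Finally you compute the transversal number of the cut-set hypergraph by hand. The singletons $\{i\}$ with $i\in\iv(G)$ are cut sets, and every cut set lies in $\iv(G)$.
\end{itemize}
In effect this is the argument of \Cref{thm:generalized v-number}, with the general ``transversals are connected dominating sets'' lemma replaced by a two-line count specific to forests.

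The paper's version is shorter. Yours is self-contained, and it treats disconnected forests, isolated vertices and pieces such as $K_2$ uniformly. A connected-domination formulation, and the paper's identification of $\pe$ with $J_{K_n}$, would need to handle those cases separately.

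Two small points:
\begin{itemize}
\item You should state explicitly that $J_G$ is radical, since radicality is part of the definition of coordinate-saturated.
\item The obstacle you flag is not real. Send $x_{i,1}\mapsto x_i$, $x_{i,2}\mapsto y_i$ on one side of the bipartition, and $x_{j,1}\mapsto y_j$, $x_{j,2}\mapsto -x_j$ on the other. This carries $f_e^{(2)}$ to $x_iy_j-x_jy_i$ over any field, so no $\sqrt{-1}$ is needed. Being a signed permutation of variables, it visibly preserves blocks and the property of containing no indeterminates.
\end{itemize}
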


\begin{proof}
For bipartite graphs, the ideal $L_{G}^{\mathbb{K}}(2)$ is isomorphic to the binomial edge ideal $J_{G}$, and under this isomorphism the minimal prime $\pe$ corresponds to $J_{K_{n}}$. The result then follows from \textup{\cite[Corollary~B]{jaramillo2024connected}}.
\end{proof}
\begin{corollary}\label{cor:bound}
Let $G$ be  a forest graph. Then $\vn(L_G^{\mathbb{K}}(d)) \leq v_d(G)$.    
\end{corollary}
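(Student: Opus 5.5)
The bound should follow from the definition of the $\vn$-number as a minimum over all associated primes. Since $L_G^{\mathbb{K}}(d)$ is radical by \textup{\cite[Theorem~1.5]{cw19}}, its associated primes are exactly its minimal primes. Among them is $\pp_{\emptyset}(G)$, the unique minimal prime containing no indeterminates. Therefore
\[
\vn(L_G^{\mathbb{K}}(d))\leq \vn_{\pp_{\emptyset}(G)}(L_G^{\mathbb{K}}(d)),
\]
and it remains to bound the right-hand side by $v_d(G)$. I split into cases according to $d$.

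For $d\geq 3$, Theorem~\ref{thm:local v-number LSS ideals} gives equality $\vp=v_d(G)$. For $d=2$, Proposition~\ref{cor: d=2} gives $\vn_{\pp_{\emptyset}(G)}(L_G^{\mathbb{K}}(2))=v_2(G)$.

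The case $d=1$ is not covered by those results, so I would treat it separately. Here $L_G^{\mathbb{K}}(1)=(x_ix_j:\{i,j\}\in E(G))$ is the edge ideal of $G$, and $V_{\geq 1}(G)$ is the set of non-isolated vertices. The edge ideal has the minimal prime generated by the variables of a minimal vertex cover $C$. Choose $C$ to be the complement of a maximal independent set $A$ of the non-isolated part. Then $(I:\prod_{i\in A}x_i)=(x_j: j\in N_G(A))=(x_j:j\in C)$. This is prime, because every vertex of $C$ has a neighbour in $A$ by maximality of $A$. Its degree is $|A|\leq v_1(G)$ when $G$ has edges. If $G$ has no edges, the ideal is zero and the statement is vacuous or conventional.

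An alternative for $d=1$ avoids this construction. The abstract framework Proposition~\ref{prop: inequality} requires only coordinate-saturation and the existence of a prime $P_\emptyset$ with no indeterminates. For an edge ideal, however, every minimal prime contains indeterminates, so $P_\emptyset$ is not a minimal prime and the framework does not apply. The direct construction above is therefore needed.

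The main obstacle is exactly this $d=1$ case, together with confirming that $\pp_{\emptyset}(G)$ is genuinely an associated prime when $d=2$. For $d=2$ I would use the identification with $J_G$ under which $\pp_{\emptyset}(G)$ corresponds to $J_{K_n}$, which is always a minimal prime of $J_G$ for $G$ connected. For a forest with several components, the corresponding prime is the sum of the primes $J_{K}$ over the components, and it is still minimal.
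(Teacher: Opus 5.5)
Your proposal is correct and, for $d\ge 2$, is exactly the argument the paper intends: $\vn(L_G^{\mathbb{K}}(d))\le \vn_{\pp_{\emptyset}(G)}(L_G^{\mathbb{K}}(d))$, and the right-hand side equals $v_d(G)$ by Theorem~\ref{thm:local v-number LSS ideals} when $d\ge 3$ and by Proposition~\ref{cor: d=2} when $d=2$. Your separate $d=1$ argument is not needed for the paper's statement, since the paper handles $d=1$ through the edge-ideal bound in Theorem~\ref{thm:v-number-regularity}, but it is correct; note only that primality of $(x_j:j\in C)$ is automatic, and what maximality of $A$ really buys is that $(I:\prod_{i\in A}x_i)$ has no leftover quadratic generators, so it is exactly that linear prime.
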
 

    

\begin{notation}
Let $G$ be a simple graph and $d \geq 3$. Then we recall the combinatorial invariant $t(G)$ introduced in \cite[Notation 4.3]{nkv26}:
\[
t(G)
=
\max\Bigl\{
|E(H)|
\,\Big|\,
H \text{ is an induced subgraph of } G,\;
H \text{ is a forest, and }
\Delta(H)\le d
\Bigr\},
\]
where $\Delta(G)=\max\{\deg_G(v):v\in V(G)\}.$
\end{notation}

\begin{lemma}\label{lem: inequality vd and tg}
Let $G$ be a forest graph and $d\geq 3$. Then, $v_{d}(G)\leq t(G)$.
\end{lemma}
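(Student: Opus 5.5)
We need to show that if $G$ is a forest and $d\ge 3$, then $v_d(G)\le t(G)$. The plan is to exhibit an induced subforest $H$ of $G$ with $\Delta(H)\le d$ and $|E(H)|\ge v_d(G)$.

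Since $G$ is a forest, every induced subgraph of $G$ is automatically a forest. It therefore suffices to construct an induced subgraph $H$ with maximum degree at most $d$ and at least $v_d(G)$ edges. The natural idea is to assign to each vertex $i\in V_{\geq d}(G)$ its own edge, injectively, so that the collection of chosen edges spans an induced subgraph of controlled degree.

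First I would reduce to the case where $G$ is a tree. Both $v_d$ and $t$ are additive over connected components, since an induced subgraph of a disjoint union is the disjoint union of induced subgraphs. So we may assume $G$ is a tree, and we root it at a leaf $r$ (if $G$ has no edges, both sides are $0$).

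For every non-root vertex $i$, let $p(i)$ be its parent and $e_i=\{i,p(i)\}$ the parent edge; the map $i\mapsto e_i$ is injective. A vertex $i\in V_{\geq d}(G)$ with $i\neq r$ is allowed, since $\deg r=1<3\le d$ rules out $i=r$. Set $F=\{e_i:i\in V_{\geq d}(G)\}$, so $|F|=v_d(G)$, and let $W$ be the set of endpoints of the edges in $F$.

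Take $H=G[W]$. This is an induced subforest, and it contains every edge of $F$, so $|E(H)|\ge v_d(G)$. The remaining task is to bound $\Delta(H)$, and here a direct argument fails.

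A vertex $w\in W$ can be adjacent in $H$ to its parent and to every child that also lies in $W$. A child $c$ of $w$ lies in $W$ exactly when $c\in V_{\geq d}(G)$, or when $c=p(j)$ for some $j\in V_{\geq d}(G)$. The second case is impossible: $p(j)=c$ would make $j$ a child of $c$, not of $w$. So the real condition is $c\in V_{\ge d}(G)$ or $w=p(c)$ with $c\in V_{\ge d}(G)$, which is the same thing. Hence $\deg_H(w)\le 1+\#\{\text{children of } w \text{ in } V_{\geq d}(G)\}$, and this count can exceed $d$.

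The fix is to prune. At each vertex $w$ having more than $d-1$ children in $V_{\geq d}(G)$, we do not discard the surplus edges. Instead, for a surplus child $c$ we replace $e_c$ by an edge from $c$ down to one of its own children. Such a child exists because $\deg c\ge d\ge 3$, so $c$ has at least $d-1\ge 2$ children. Injectivity is kept by choosing these downward edges carefully.

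This rerouting is the main obstacle. One has to show, by induction from the root downward, that each vertex of $V_{\geq d}(G)$ can be matched to a distinct incident edge so that the induced subgraph on the matched endpoints has maximum degree at most $d$.

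I would try a greedy top-down procedure. At a vertex $w$ of high degree, keep its parent edge (if it is used) together with at most $d-1$ child edges. Every other high-degree child $c$ is matched to an edge $\{c,g\}$ with $g$ a child of $c$ that is not in $V_{\geq d}(G)$, or else $c$ is handled recursively. Because these choices are made in a tree, the chosen edges never create extra induced adjacencies beyond a controlled number at each vertex.

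If this greedy bookkeeping becomes too delicate, the alternative is a direct induction on $n$. Delete a deepest high-degree vertex $v$ together with its leaf children, apply the induction hypothesis to the remaining graph, and then add back one edge from $v$ to a leaf child. This adds $1$ to $|E(H)|$ and raises degrees only at $v$ (to $1$ plus whatever neighbours of $v$ lie in $H$). The induction has to be arranged so that $v$'s parent is kept out of the inductive $H$, or so that its degree there is at most $d-1$.
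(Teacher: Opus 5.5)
Your proposal does not yet contain a proof. The one step with real content, bounding $\Delta(H)$, is left open, and the fixes you sketch do not work as stated.

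First, a smaller error: the degree count is off. A child $c$ of $w$ lies in $W$ not only when $c\in V_{\geq d}(G)$, but also whenever $c$ has a child $j\in V_{\geq d}(G)$, since then $c=p(j)$ is an endpoint of $e_j$. So your claim that ``the second case is impossible'' is false.

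More seriously, the rerouting idea cannot lower any degree, because $H$ is an \emph{induced} subgraph. Suppose a surplus child $c$ of $w$ is reassigned to a downward edge $\{c,g\}$. Then $c$ is still a vertex of $H$. So is $w$, since you keep some child edges at $w$. Hence the edge $\{w,c\}$ remains in $H$, and $\deg_H(w)$ is unchanged. To lower $\deg_H(w)$ you must delete vertices from $H$ altogether and then find compensating edges elsewhere, and your sketch supplies no such mechanism.

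The alternative induction does not balance either. Deleting a deepest $v\in V_{\geq d}(G)$ can lower $v_d$ by $2$: both $v$ and $p(v)$ drop out when $\deg_G(p(v))=d$. You add back only one edge. Moreover, $v$ need not have any leaf child at all, since its children are only known to have degree $<d$; they may, for example, start pendant paths.

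The missing idea is a counting argument rather than an injective edge assignment. The paper inducts on $|V(G)|$ for a tree $G$, with two cases.
\begin{itemize}
\item Some leaf $i$ has neighbour $j$ with $\deg_G(j)\neq d$. Then deleting $i$ leaves $V_{\geq d}$ unchanged, and $t(G\setminus i)\le t(G)$, so the inductive hypothesis gives the result.
\item Every neighbour of a leaf has degree exactly $d$. Then $H=G[\textup{Leaf}(G)\cup N_G(\textup{Leaf}(G))]$ has $\Delta(H)\le d$ automatically, because all its vertices already have degree at most $d$ in $G$. Also $|E(H)|\ge|\textup{Leaf}(G)|$, since no two leaves are adjacent.
\end{itemize}
In the second case, the identity $\sum_{v}(\deg_G(v)-2)=-2$ for a tree gives $|\textup{Leaf}(G)|\ge 2+(d-2)\,v_d(G)\ge v_d(G)$, using $d\ge 3$. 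Hence $t(G)\ge v_d(G)$. The paper never needs to choose a separate edge for each high-degree vertex; it only needs that a tree with many high-degree vertices has many leaves.
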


\begin{proof}
Since both $v_{d}(G)$ and $t(G)$ are additive on connected components, it suffices to treat the case, where $G$ is connected, that is, where $G$ is a tree. We prove the result by induction on $\size{V(G)}$. If $\size{V(G)}=1$, then $v_{d}(G)=t(G)=0$ and the inequality holds. For the inductive step, suppose $v_{d}(G)\leq t(G)$ for any tree on at most $n-1$ vertices and let $G$ be a tree on the vertex set $[n]$. Let 
\[\leaf(G):=\{i\in [n]: \deg_{G}(i)=1\},\]
be the set of leaves of $G$. Suppose there exists some $i\in \leaf(G)$ such that its unique neighbor has degree distinct from $d$. Say $N_{G}(i)=\{j\}$ with $\deg_{G}(j)\neq d$. In this case, we have $j\in V_{\geq d}(G)$ if and only if $j\in V_{\geq d}(G\setminus \{i\})$ and so $v_{d}(G)=v_d(G\setminus \{i\})$. We then have 
\[v_{d}(G)=v_{d}(G\setminus \{i\})\leq t(G\setminus \{i\})\leq t(G),\]
where the first inequality follows from the inductive hypothesis. This gives the required in equality $v_{d}(G)\leq t(G)$.

Hence, we may assume that all vertices in
\[N_{G}(\leaf(G))=\{j\in [n]: \{i,j\}\in E(G) \text{ for some $i\in \leaf(G)$}\}\]
have degree exactly $d$. Consider the induced subgraph $H := G[\leaf(G) \cup N_{G}(\leaf(G))].$
Since every vertex in $\leaf(G)$ has degree $1$ and every vertex in $N_{G}(\leaf(G))$ has degree exactly $d$ in $G$, we have $\Delta(H) \leq d$, and hence $t(G) \geq |E(H)|$ by definition of $t(G)$. Moreover, every vertex in $\leaf(G)$ is a leaf of $H$ and thus belongs to a unique edge, and the edges incident to distinct leaves are distinct, giving $|E(H)| \geq |\leaf(G)|$ and therefore $t(G) \geq |\leaf(G)|$.
On the other hand, since $G$ has $n$ vertices and $n-1$ edges, because it is a tree, and easy counting argument gives $\sum_{v\in [n]}(\deg_{G}(v)-2)=-2.$
Hence,
\[
\begin{aligned}
-2&=\sum_{v\in [n]}(\deg_{G}(v)-2)\\
&=\sum_{v\in \leaf(G)}-1+\sum_{\deg_{G}(v)\geq 2}(\deg_{G}(v)-2)=-\size{\leaf(G)}+\sum_{\deg_{G}(v)\geq 2}(\deg_{G}(v)-2)\\
&\geq -\size{\leaf(G)}+\sum_{\deg_{G}(v)\geq d}(d-2)=-\size{\leaf(G)}+v_{d}(G)(d-2).
\end{aligned}
\]
Consequently,
\begin{equation*}\label{d-2}\size{\leaf(G)}\geq 2+v_{d}(G)(d-2)\geq 2+v_{d}(G),\end{equation*}
where the last inequality holds since $d\geq 3$. Therefore, using the inequality \(t(G)\geq |\leaf(G)|\), we obtain \(v_d(G)\leq t(G)\). This completes the inductive step.
\end{proof}

\begin{theorem}\label{thm:v-number-regularity}
 Let $G$ be a forest graph, and let $d \geq 1$. Then $\vn(L_G^{\mathbb{K}}(d)) \leq \reg(R/L_G^{\mathbb{K}}(d))$.   
\end{theorem}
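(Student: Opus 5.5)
The plan is to split into the cases $d=1$, $d=2$ and $d\geq 3$, and in each case compare an upper bound for the $\vn$-number with a known lower bound for the regularity.

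\textbf{Case $d\geq 3$.} By Corollary~\ref{cor:bound}, which rests on Theorem~\ref{thm:local v-number LSS ideals} and the inequality $\vn(I)\leq \vn_{\pp}(I)$ for every associated prime $\pp$, we have $\vn(L_G^{\mathbb{K}}(d))\leq v_d(G)$. Lemma~\ref{lem: inequality vd and tg} then gives $v_d(G)\leq t(G)$. It remains to invoke the lower bound $\reg(R/L_G^{\mathbb{K}}(d))\geq t(G)$ from \cite{nkv26}, where the invariant $t(G)$ was introduced. If that source only provides the bound for forests with $\Delta\leq d$, I would reduce to that case by restriction to induced subgraphs. Concretely, let $H$ be an induced forest with $\Delta(H)\leq d$ realising $t(G)$. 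Then $R/L_H^{\mathbb{K}}(d)$ is an algebra retract of $R/L_G^{\mathbb{K}}(d)$: set the variables of vertices outside $H$ to zero, which is well defined because $L_G^{\mathbb{K}}(d)$ is invariant under the maps $\pi_S$, as checked in Lemma~\ref{lem: LG is coordinate saturated}. Hence $\reg(R/L_G^{\mathbb{K}}(d))\geq \reg(R/L_H^{\mathbb{K}}(d))$. For such $H$ the ideal $L_H^{\mathbb{K}}(d)$ should be a complete intersection of $|E(H)|$ quadrics, since the ideal is prime or a complete intersection when $d$ exceeds the maximal degree. This gives regularity $|E(H)|=t(G)$.

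\textbf{Case $d=2$.} Proposition~\ref{cor: d=2} gives $\vn(L_G^{\mathbb{K}}(2))\leq |\iv(G)|$. Because $G$ is bipartite, $L_G^{\mathbb{K}}(2)\cong J_G$, and the known inequality $\vn(J_G)\leq \reg(S/J_G)$ for forests (via \cite{jaramillo2024connected}) settles this case. Alternatively, one can use the known regularity of binomial edge ideals of forests: it is at least the number of internal vertices plus the number of components of the forest with at least one edge, minus a correction. I would check that this quantity is at least $|\iv(G)|$. If needed, I would do this component by component, using that $\reg$ is additive over components.

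\textbf{Case $d=1$.} Here $L_G^{\mathbb{K}}(1)=(x_ix_j:\{i,j\}\in E(G))$ is the edge ideal of the forest. The inequality $\vn(I(G))\leq \reg(R/I(G))$ for forests is known: the $\vn$-number is bounded by the minimal size of a maximal independent-type set, and for forests the regularity equals the induced matching number. I would cite that result directly.

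I expect the main obstacle to be the lower bound $\reg\geq t(G)$ in the case $d\geq 3$. This requires the induced-subgraph regularity inequality together with the claim that, for an induced forest $H$ with maximal degree at most $d$, the ideal $L_H^{\mathbb{K}}(d)$ is a complete intersection. My first attempt would be to quote both facts from \cite{nkv26} and \cite{cw19}.
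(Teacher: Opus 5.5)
Your proposal is correct and follows the paper's proof essentially step for step. For $d\geq 3$ the paper chains Corollary~\ref{cor:bound}, Lemma~\ref{lem: inequality vd and tg} and \cite[Corollary~4.4]{nkv26} exactly as you do, so your retract/complete-intersection fallback is sound but unnecessary. For $d=1$ it cites \cite[Theorem~4.5]{ss22}, and for $d=2$ it combines Proposition~\ref{cor: d=2} with the lower bound $\reg(R/J_G)\geq|\iv(G)|$ from \cite[Theorem~4.1]{JNR} (applied componentwise). That is your ``alternative'' route, and you should use it rather than an unspecified inequality from \cite{jaramillo2024connected}.
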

\begin{proof}
For $d=1$, we get that $L_G^{\mathbb{K}}(d)=I(G)$. Then the required inequality holds using \cite[Theorem 4.5]{ss22}. For $d=2$, using \Cref{cor: d=2} and \cite[Theorem 4.1]{JNR}, we obtain
$$\vn(L_G^{\mathbb{K}}(2)) \leq  \size{\iv(G)} \leq \reg(R/L_G^{\mathbb{K}}(2)).$$ For $d \geq 3$, by \cite[Corollary 4.4]{nkv26}, we have $\reg\!\left(\frac{R}{L_G^{\mathbb{K}}(d)}\right)
\geq t(G).$ Also, by Lemma~\ref{lem: inequality vd and tg}, we have that $v_d(G) \leq t(G)$. Thus, by \Cref{cor:bound}, we obtain $\vn(L_G^{\mathbb{K}}(d)) \leq \reg(R/L_G^{\mathbb{K}}(d))$.
\end{proof}

\section{$\vn$-number of $L_{G}^{\RR}(2)$ over real field $\RR$}\label{LSS ideals real field}

In this section, we study the v-number of $L_{G}^{\RR}(2)$ over real field $\RR$. We achieve this after applying the framework developed in~\Cref{framework coordinate saturated} to this case. We first present the primary decomposition of $L_{G}^{\RR}(2)$ over $\RR$ from \cite{hmsw15}. Throughout, we fix $G$ to be a connected and non-bipartite graph on $[n]$. In the case where $G$ is bipartite, $L_{G}^{\RR}(2)$ is isomorphic to $J_{G}$, see \textup{\cite[Corollary~6.2]{bms24}}.

\begin{definition}\textup{\cite[\S 2]{hmsw15}}
We define $I_{K_{n}}$ as the ideal generated by the binomials
\[
\begin{aligned}
f_{i,j}:&=x_{i}x_{j}+y_{i}y_{j}, \quad 1\leq i<j\leq n,\\
g_{i,j}:&=x_{i}y_{j}-x_{j}y_{i}, \quad 1\leq i <j\leq n,\\
h_{i}:&=x_{i}^{2}+y_{i}^{2}, \quad 1\leq i\leq n.
\end{aligned}
\]
For $1\leq m<n$, we define $I_{K_{m,n-m}}$ as the ideal generated by the binomials
\[
\begin{aligned}
f_{i,j}:&=x_{i}x_{j}+y_{i}y_{j}, \quad 1\leq i\leq m, \quad m+1\leq j\leq n,\\
g_{i,j}:&=x_{i}y_{j}-x_{j}y_{i}, \quad 1\leq i<j\leq m, \quad m+1\leq i<j\leq n.
\end{aligned}
\]
\end{definition}

We now define the building blocks of the primary decomposition of $L_{G}^{\mathbb{R}}(2)$.

\begin{notation}
Let $H$ be an arbitrary connected graph on $[n]$. If $H$ is not bipartite, then we define $\widetilde{H}:=K_{n}$, and if $H$ is bipartite with biconnected components $\{1,\ldots,n\}$ and $\{n+1,\ldots,m\}$, then we set $\widetilde{H}:=I_{K_{n,m-n}}$.
\end{notation}

We denote by $G_{S}$ the induced subgraph of $G$ on $[n]\setminus S$.

\begin{definition}\textup{\cite[\S 4]{hmsw15}}
For $S\subseteq [n]$, set 
\[Q_{S}(G):=\langle \{x_{i},y_{i}\}_{i\in S}, I_{\widetilde{G}_{1}},\ldots, I_{\widetilde{G}_{c(S)}} \rangle, \]
where $G_{1},\ldots,G_{c(S)}$ are the connected components of $G_{S}$.
\end{definition}

\begin{definition}\textup{\cite[\S 5]{hmsw15}}
We say that $i\in [n]$ is a {\em cut point} of $G$ if $G\setminus \{i\}$ has more connected components than $G$. Moreover, we say that $i$ is a {\em bipartite point} if $G\setminus \{i\}$ has more bipartite connected components than $G$. Let $\mathcal{M}(G)$ be the set of all subsets $S\subseteq [n]$ such that each $i\in S$ is either a cut point or a bipartite point of $G_{S\setminus \{i\}}$. In particular, $\emptyset \in \mathcal{M}(G)$.
\end{definition}

\begin{theorem}\textup{\cite[Theorems~1.1 and~5.2]{hmsw15}}\label{minimal primes LG2}
Let $G$ be a simple graph on $[n]$, and $S \subseteq [n]$. Then the ideal $L_{G}^{\mathbb{R}}(2)$ is radical and its minimal prime decompositions is
\[L_{G}^{\mathbb{R}}(2)=\bigcap_{S\in \MG}Q_{S}(G).\]
\end{theorem}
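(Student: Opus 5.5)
The plan is to split the statement into three parts: radicality, the decomposition $L_G^{\RR}(2)=\bigcap_{S}Q_S(G)$ at the level of prime ideals, and minimality of the index set $\MG$.

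\textbf{Radicality.} I would reduce to an algebraically closed field. Since $\RR$ is perfect, an ideal defined over $\RR$ is radical if and only if its extension to $\mathbb{C}[x_i,y_i]$ is radical. Over $\mathbb{C}$, I would apply the linear change of coordinates $z_i=x_i+\sqrt{-1}\,y_i$ and $w_i=x_i-\sqrt{-1}\,y_i$. It transforms each generator as
\[
x_ix_j+y_iy_j=\tfrac12\,(z_iw_j+w_iz_j).
\]
Hence $L_G^{\mathbb{C}}(2)$ becomes the permanental edge ideal $\Pi_G$, which in characteristic $0$ is equivalent to the parity binomial edge ideal $\II_G$. Radicality of $\II_G$ in characteristic $\neq 2$ is known from \cite{kst16}, so this step follows.

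\textbf{Decomposition.} First I would check that each $Q_S(G)$ is prime over $\RR$.
\begin{itemize}
\item For a non-bipartite component, $I_{K_m}$ is the ideal of pairs of parallel, isotropic planar vectors. Over $\RR$ it is prime because $x^2+y^2$ is irreducible: geometrically it is the real form of a union of two conjugate components, which is irreducible over $\RR$.
\item For a bipartite component, $I_{K_{m,n-m}}$ is prime, since it is the ideal of configurations in which the vectors on one side are all parallel to a line $\ell$ and the vectors on the other side are parallel to $\ell^{\perp}$.
\item Sums of such primes in disjoint sets of variables, together with the ideal $(x_i,y_i: i\in S)$, stay prime.
\end{itemize}
The containment $L_G^{\RR}(2)\subseteq Q_S(G)$ is immediate, because every edge of $G$ either meets $S$ or lies inside a component of $G_S$.

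For the reverse direction, let $P\supseteq L_G^{\RR}(2)$ be prime. Set $S=\{i: x_i,y_i\in P\}$ and work in the fraction field $F$ of $R/P$, where each $v_i=(x_i,y_i)$ with $i\notin S$ is a nonzero vector. On a connected component of $G_S$, orthogonality of adjacent nonzero vectors in $F^2$ forces one of two situations:
\begin{itemize}
\item the vectors along each path alternate between a line $\ell$ and $\ell^\perp$, which gives the bipartite prime; or
\item some $v_i$ is isotropic, i.e.\ $h_i\in P$, which happens in particular when an odd cycle closes up. In that case every vector in the component is parallel to the isotropic $v_i$, which gives the generators of $I_{K_m}$.
\end{itemize}
Thus $P\supseteq Q_S(G)$, and the minimal primes lie among the $Q_S(G)$.

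\textbf{Minimality.} It remains to show that the minimal elements among the $Q_S(G)$ are exactly those with $S\in\MG$.
\begin{itemize}
\item If some $i\in S$ is neither a cut point nor a bipartite point of $G_{S\setminus\{i\}}$, then adding $i$ back merges no components and creates no new bipartite component. I would verify directly that $Q_{S\setminus\{i\}}(G)\subseteq Q_S(G)$, so $Q_S(G)$ is not minimal.
\item Conversely, for $S\neq T$ in $\MG$, I would show $Q_T(G)\not\subseteq Q_S(G)$ by dimension and generator comparison, as for binomial edge ideals. Using the cut-point or bipartite-point property of some $i\in T\setminus S$, I would exhibit a generator of $Q_S(G)$ not lying in $Q_T(G)$: a $g_{j,k}$ joining two components, or an $h_j$ on a formerly bipartite component.
\end{itemize}

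I expect the prime-containment step to be the main obstacle. Over $\RR$, isotropic vectors exist only over the extension field $F$, and one must argue carefully that an isotropic $v_i$ propagates parallelism to the whole component.
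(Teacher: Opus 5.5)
\textbf{Where the proposal stands.} The radicality step is sound: you descend from $\mathbb{C}$, and the substitution $y_i\mapsto\sqrt{-1}\,y_i$ turns $L_G^{\mathbb{C}}(2)$ into $\II_G$, which is radical by \cite{kst16}. Your fraction-field argument, together with the reduction $Q_{S\setminus\{i\}}(G)\subseteq Q_S(G)$, correctly shows that every prime containing $L_G^{\RR}(2)$ contains some $Q_S(G)$ with $S\in\MG$. Hence $L_G^{\RR}(2)=\bigcap_{S\in\MG}Q_S(G)$ holds as an equality of ideals. The paper itself gives no argument here; it only quotes \cite{hmsw15}.

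\textbf{The step that fails.} Your third bullet claims that sums of these primes in disjoint sets of variables stay prime. Over $\RR$ this is false, because $I_{K_m}$ is prime but not geometrically prime: $(y_i/x_i)^2=-1$ in its fraction field. Concretely, let $a,b$ lie in two different non-bipartite components of $G_S$. Then
\[
(x_ax_b+y_ay_b)(x_ax_b-y_ay_b)=x_b^{2}h_a-y_a^{2}h_b\in Q_S(G),
\]
yet neither factor lies in $Q_S(G)$. Indeed, $Q_S(G)$ is homogeneous for the grading $\deg x_i=\deg y_i=e_i\in\ZZ^n$, and none of its generators has multidegree $\le e_a+e_b$.

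Such $S$ occur in $\MG$ even for connected $G$. Take two disjoint triangles and a vertex $c$ joined to one vertex of each. Then $c$ is a cut point, so $\{c\}\in\MG$. But $Q_{\{c\}}(G)$ is the intersection of two real primes: the contractions of the complex components in which the two triangles lie on the same, respectively on different, isotropic lines of $\mathbb{C}^2$. The first contains $Q_\emptyset(G)=I_{K_n}$, and only the second is a minimal prime of $L_G^{\RR}(2)$.

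\textbf{Consequence.} This gap cannot be closed. With $Q_S(G)$ defined as in the paper, the primality part of the statement fails, and only the intersection formula survives. In fact $Q_S(G)$ is prime exactly when $G_S$ has at most one non-bipartite component. The bipartite summands become generic $2\times 2$ determinantal ideals after $(x_j,y_j)\mapsto(-y_j,x_j)$ on one side, so they are geometrically prime. In general, the minimal primes of $L_G^{\RR}(2)$ are the contractions of the complex minimal primes of $L_G^{\mathbb{C}}(2)\cong\II_G$. There is one for each disconnector $S$ and each conjugate pair of sign-split sign patterns in the sense of \cite{kst16}, and your minimality argument has to be run for those.

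\textbf{A separate slip in the last bullet.} The roles are reversed there. For $T\not\subseteq S$, the non-containment $Q_T(G)\not\subseteq Q_S(G)$ is trivial: take a variable $x_i$ with $i\in T\setminus S$. The real case is $T\subsetneq S$ with $i\in S\setminus T$. There you need an element of $Q_T(G)$ outside $Q_S(G)$, namely one of:
\begin{itemize}
\item an $f_{j,k}$ or $g_{j,k}$ linking two components of $G_S$; or
\item an $h_j$ on a component of $G_S$ that is bipartite but lies in a non-bipartite component of $G_T$.
\end{itemize}
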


Note that since $G$ is non-bipartite, we have $Q_{\emptyset}(G)=I_{K_{n}}$ is a minimal prime of $L_{G}^{\mathbb{R}}(2)$. Our next goal is to provide a good upper bound for the localized v-number $\mathrm{v}_{I_{K_{n}}}(L_{G}^{\mathbb{R}}(2))$. To address this, we use the framework developed in~\Cref{framework coordinate saturated}, proving that the ideal $L_{G}^{\mathbb{R}}(2)$ is \emph{coordinate-saturated}.

\begin{lemma}\label{lem: LG2 is coordinate saturated}
Let $G$ be a non-bipartite simple graph. Then the ideal $L_{G}^{\mathbb{R}}(2)$ is \emph{coordinate-saturated}. Moreover, $L_{G}^{\mathbb{R}}(2)$ has a unique minimal prime containing no indeterminates, which is precisely $I_{K_{n}}$.
\end{lemma}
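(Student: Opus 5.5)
The plan mirrors the proof of Lemma~\ref{lem: LG is coordinate saturated}: verify conditions~\ref{s1} and~\ref{s2} of Proposition~\ref{prop: sufficient conditions for coordinate-saturated}, using the explicit minimal primes from Theorem~\ref{minimal primes LG2}. Radicality of $L_{G}^{\mathbb{R}}(2)$ is part of that theorem.

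First, for a minimal prime $\qq=Q_S(G)$ with $S\in\MG$, I determine $S(\qq)$. The aim is to show $S(\qq)=\{x_i,y_i: i\in S\}$, i.e.\ that $Q_S(G)$ contains no further indeterminates. Indeed, $Q_S(G)$ is generated by $\{x_i,y_i\}_{i\in S}$ together with ideals $I_{\widetilde{G}_k}$ in disjoint sets of variables. Each $I_{\widetilde{G}_k}$ is a prime ideal generated by homogeneous binomials $f_{i,j},g_{i,j},h_i$ with no monomial term that is an indeterminate. One way to check that it contains no variable is to exhibit a point of its real or complex variety with all coordinates nonzero. For $I_{K_m}$ take $(x_i,y_i)=(1,\sqrt{-1})$ for all $i$. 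Over $\RR$ the prime is still defined by these generators, and containing a degree-one element is a degree-one statement, so it can be checked after extending scalars to $\mathbb{C}$. For $I_{K_{m,n-m}}$ take $(1,1)$ on one side and $(1,-1)$ on the other. With $S(\qq)$ known, condition~\ref{s1} follows exactly as before. Under $\pi_{S(\qq)}$, each generator $x_ix_j+y_iy_j$ with $\{i,j\}\in E(G)$ is either fixed (if $i,j\notin S$) or sent to $0$, so $\pi_{S(\qq)}(L_{G}^{\mathbb{R}}(2))\subseteq L_{G}^{\mathbb{R}}(2)$.

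For condition~\ref{s2}, note that
\[L_{G}^{\mathbb{R}}(2)+(x_i,y_i:i\in S)=L_{G_S}^{\mathbb{R}}(2)+(x_i,y_i:i\in S),\]
and that $L_{G_S}^{\mathbb{R}}(2)$ is the sum of the ideals of the connected components $G_1,\dots,G_{c(S)}$ of $G_S$, in disjoint variables. By Theorem~\ref{minimal primes LG2} applied to each component, each component ideal has a unique minimal prime free of indeterminates, namely $I_{\widetilde{G}_k}$ (the $T=\emptyset$ prime). Every other minimal prime $Q_T(G_k)$ with $T\neq\emptyset$ contains $x_i$ for $i\in T$. A minimal prime of a sum of ideals in disjoint variables is a sum of minimal primes of the summands, where one must check that such sums remain prime over $\RR$. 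Granting this, the unique minimal prime avoiding the variables outside $S$ is $(x_i,y_i:i\in S)+\sum_k I_{\widetilde{G}_k}=Q_S(G)$, so~\ref{s2} holds. Taking $S=\emptyset$, the same argument gives that $Q_\emptyset(G)=I_{K_n}$ is the unique minimal prime of $L_{G}^{\mathbb{R}}(2)$ containing no indeterminates, since $G$ is connected and non-bipartite.

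The main obstacle is the passage from the decomposition of each component to that of the sum. Over a non-algebraically-closed field, a sum of primes in disjoint variables need not be prime. I would sidestep this by not proving it abstractly. Instead, the set of minimal primes of $L_{G_S}^{\mathbb{R}}(2)$ is directly $\{Q_T(G_S): T\in\mathcal{M}(G_S)\}$ by Theorem~\ref{minimal primes LG2} applied to the graph $G_S$ itself, which need not be connected. Among these, $T=\emptyset$ is the only one with no indeterminates. Adding $(x_i,y_i:i\in S)$ then gives exactly the minimal primes containing $(x_i,y_i:i\in S)$, with $Q_\emptyset(G_S)+(x_i,y_i:i\in S)=Q_S(G)$ the unique one of the required kind.
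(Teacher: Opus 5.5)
Your route is the paper's own. You check conditions~\ref{s1} and~\ref{s2} of Proposition~\ref{prop: sufficient conditions for coordinate-saturated} from the quoted decomposition $L_G^{\mathbb{R}}(2)=\bigcap_{S\in\mathcal{M}(G)}Q_S(G)$, and your computation of $S(\mathfrak{q})$ via the points $(1,\sqrt{-1})$ and $(1,\pm 1)$, as well as your check of~\ref{s1}, are fine. The gap is exactly the obstacle you named, and applying the theorem to $G_S$ does not get around it. That citation asserts that $Q_\emptyset(G_S)=\sum_k I_{\widetilde{G}_k}$ is prime, which \emph{is} the primality over $\mathbb{R}$ of a sum of primes in disjoint variables. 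This fails once $G_S$ has two non-bipartite components. Take two triangles $A=\{1,2,3\}$, $B=\{4,5,6\}$ and a vertex $7$ adjacent to $3$ and $4$. This $G$ is connected and non-bipartite, and $\{7\}\in\mathcal{M}(G)$ since $7$ is a cut point. Writing $I_{K_3}(A)$ for $I_{K_3}$ on the vertices of $A$, we have
\[
y_4^2(x_1^2+y_1^2)-y_1^2(x_4^2+y_4^2)=(x_1y_4-x_4y_1)(x_1y_4+x_4y_1)\in I_{K_3}(A)+I_{K_3}(B).
\]
Yet neither factor lies in $Q_{\{7\}}(G)=(x_7,y_7)+I_{K_3}(A)+I_{K_3}(B)$, because $I_{K_3}(A)+I_{K_3}(B)$ has no nonzero elements of bidegree $(1,1)$ in the $A$- and $B$-variables. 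So $Q_{\{7\}}(G)$ is not prime.

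No better argument can repair this step, because the conclusion itself fails here. Put $z_i=x_i+\sqrt{-1}\,y_i$ and $w_i=x_i-\sqrt{-1}\,y_i$. The complex minimal primes of $L^{\mathbb{R}}_{G[A]}(2)+L^{\mathbb{R}}_{G[B]}(2)$ containing no variables are $(z_A)+(z_B)$, $(w_A)+(w_B)$, $(z_A)+(w_B)$ and $(w_A)+(z_B)$, and complex conjugation groups them into two orbits. Hence $L_G^{\mathbb{R}}(2)+(x_7,y_7)$ has two minimal primes containing no indeterminates other than $x_7,y_7$, so condition~\ref{s2} fails. The real minimal prime of $L_G^{\mathbb{R}}(2)$ whose indeterminates are exactly $x_7,y_7$ is the contraction of $(z_A,w_B,x_7,y_7)$. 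It contains $x_1x_4-y_1y_4$, strictly contains $Q_{\{7\}}(G)$, and is not $P_T(L_G^{\mathbb{R}}(2))$ for any $T$. Thus $L_G^{\mathbb{R}}(2)$ is not coordinate-saturated for this $G$. The paper's proof rests on the same quoted decomposition and has the same hole; the second assertion, about $I_{K_n}$, is unaffected. Your step is valid whenever $G_S$ has at most one non-bipartite component. After $(x_j,y_j)\mapsto(y_j,-x_j)$ on one side of the bipartition, each $I_{K_{a,b}}$ becomes the ideal of $2$-minors of a generic $2\times(a+b)$ matrix, hence is geometrically prime. A prime plus geometrically prime ideals in disjoint variables stays prime.
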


\begin{proof}
The second assertion follows from the description of the minimal primes of $L_{G}^{\mathbb{R}}(2)$ from Theorem~\ref{minimal primes LG2}. To show that $L_{G}^{\mathbb{R}}(2)$ is coordinate-saturated it suffices to show that conditions~\ref{s1} and~\ref{s2} from Proposition~\ref{prop: sufficient conditions for coordinate-saturated} hold. Let $\qq:=Q_{S}(G)$ be a minimal prime of $L_{G}^{\mathbb{R}}(2)$. Using the notation from Proposition~\ref{prop: sufficient conditions for coordinate-saturated}, we have $S(\qq)=\{x_{i},y_{i}:i\in S\}$. Applying the exact same argument as in~\Cref{lem: LG is coordinate saturated} we see that the conditions~\ref{s1} and~\ref{s2} hold.
\end{proof}


\begin{definition}\label{dominating-set-gamma-cn}
{\rm
A dominating set of $G$ is a subset $S\subseteq [n]$ such that every vertex in $[n]\setminus S$ is adjacent to some vertex in $S$. We define 
\[\mathcal{D}_{c,n}(G):=\{T\subseteq [n]: \text{ $T$ is dominant and $G[T]$ is connected and non-bipartite}\},
\]
and 
\[\gamma_{c,n}(G):=\min\{\size{T}:T\in \mathcal{D}_{c,n}(G)\}.\]
}
\end{definition}

\begin{lemma}\label{lem: LG2 M and gamma}
Let $G$ be a non-bipartite simple graph. Then the invariant $\gamma_{c,n}(G)$ is greater or equal than the invariant $\gamma(L_{G}^{\mathbb{R}}(2))$ from Definition~\ref{ideal M and gamma}.
\end{lemma}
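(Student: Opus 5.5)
By \eqref{gamma and transversals}, $\gamma(L_{G}^{\mathbb{R}}(2))$ is the minimum size of a transversal of the hypergraph $\Delta$ whose edges are the sets $S(\qq)$ for $\qq\in\min(L_{G}^{\mathbb{R}}(2))\setminus\{I_{K_n}\}$. By Theorem~\ref{minimal primes LG2} and Lemma~\ref{lem: LG2 is coordinate saturated}, these primes are exactly the $Q_S(G)$ with $\emptyset\neq S\in\MG$. The coordinate set of $Q_S(G)$ is $\{x_i,y_i: i\in S\}$, so it suffices to work with vertex sets. Concretely, I will show that for every $T\in\dcng$ and every nonempty $S\in\MG$ we have $T\cap S\neq\emptyset$. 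Then the set of variables $\{x_i: i\in T\}$ is a transversal of $\Delta$ of size $|T|$. Taking $T$ of minimal size gives $\gamma(L_{G}^{\mathbb{R}}(2))\le\gamma_{c,n}(G)$.

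I argue by contradiction. Suppose $T\in\dcng$ and $S\in\MG$ is nonempty with $S\cap T=\emptyset$. Then $T\subseteq[n]\setminus S$, and $G[T]$ is connected and non-bipartite. Hence $T$ lies in a single connected component $C$ of $G_S$, and $C$ is non-bipartite. Since $T$ is dominating, every vertex of $G_S$ outside $T$ is adjacent to some vertex of $T$ and so lies in $C$. Therefore $G_S=C$ is connected and non-bipartite.

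Now pick any $i\in S$. The graph $G_{S\setminus\{i\}}$ is obtained from $G_S$ by adding back $i$. The vertex $i$ has a neighbour in $T\subseteq G_S$, because $T$ dominates and $i\notin T$. So $G_{S\setminus\{i\}}$ is connected and non-bipartite, since it contains the odd cycle of $G[T]$. Removing $i$ from it leaves $G_S$, which is still a single component and still non-bipartite. Thus $i$ is neither a cut point nor a bipartite point of $G_{S\setminus\{i\}}$: the number of components stays $1$, and the number of bipartite components stays $0$. This contradicts $S\in\MG$.

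The step that needs the most care is this last one, checking the definitions of cut point and bipartite point exactly. The key facts are that $i$ has a neighbour in $T$, which is what makes $G_{S\setminus\{i\}}$ connected, and that the odd cycle of $G[T]$ avoids $S$, which is what keeps every relevant graph non-bipartite. The remainder is the translation between vertex sets and variable transversals via \eqref{M and transversals}.
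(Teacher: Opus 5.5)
Your proof is correct and follows the paper's argument. You suppose a nonempty $S\in\MG$ misses some $T\in\dcng$; domination and connectivity of $G[T]$ then force $G_S$ to be connected and non-bipartite, so no $i\in S$ can be a cut point or a bipartite point of $G_{S\setminus\{i\}}$. Your use of the variable transversal $\{x_i:i\in T\}$ is slightly more careful than the paper's vertex-level phrasing. The check that $i$ has a neighbour in $T$ is harmless but unnecessary, because $G_S$ having one component and no bipartite components already rules out both possibilities.
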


\begin{proof}
By~\Cref{gamma and transversals}, we have that $\gamma(L_{G}^{\mathbb{R}}(2))$ coincides with the minimum size of a transversal of the hypergraph
\[\Delta:=\{S\subseteq[n]: S\in \MG\}.\]
To show that $\gamma(L_{G}^{\mathbb{R}}(2))\leq \gamma_{c,n}(G)$ it suffices to show that if $T\in \dcng$, then $T$ is a transversal of $\Delta$. Now, suppose that $T\in \dcng$, and assume, for contradiction, that there exists some $S\in \MG\setminus \{0\}$ such that $S\cap T=\emptyset$. Since $T$ is connected and dominant, it follows that $G_{S}$ is connected. Since $S\in \MG$, each $i\in S$ is either a cut point or a bipartite point of $G_{S\setminus \{i\}}$. Since $G_{S}$ is connected, the former can not happen, and since $G_{S}$ is non-bipartite (because $G[T]$ is), it also follows that no $i\in S$ is a bipartite point, a contradiction.
\end{proof}

It is not difficult to show, using combinatorial arguments, that the invariants $\gamma_{c,n}(G)$ and $\gamma(L_{G}^{\mathbb{R}}(2))$ are in fact equal. However, for the purposes of the next theorem, we only need the inequality $\gamma(L_{G}^{\mathbb{R}}(2))\leq \gamma_{c,n}(G)$, which is given by the previous lemma.

\begin{theorem}\label{LSS-over-real}
Let $G$ be a non-bipartite simple graph. Then $\vlg \leq \gamma_{c,n}(G).$
\end{theorem}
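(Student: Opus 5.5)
The plan is to combine \Cref{lem: LG2 is coordinate saturated}, \Cref{prop: inequality} and \Cref{lem: LG2 M and gamma}. Only the inequality $\vpp\leq\gamma(I)$ is needed here, and that direction requires neither algebraic closedness of the field nor radicality of $I+M$. This matters, because $\RR$ is not algebraically closed, so the equality part of \Cref{thm: main theorem coordinate-saturated ideals} is unavailable.

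\emph{Step 1.} By \Cref{lem: LG2 is coordinate saturated}, $I:=L_{G}^{\RR}(2)$ is coordinate-saturated (in particular radical) in the variables $\{x_i,y_i\}_{i\in[n]}$. Its unique minimal prime containing no indeterminates is $\pp=Q_{\emptyset}(G)=I_{K_n}$. This places us in the standing setting of \Cref{framework coordinate saturated}, with $\pp=P_{\emptyset}(I)$.

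\emph{Step 2.} Apply \Cref{prop: inequality}, which gives $\vlg\leq\gamma(L_{G}^{\RR}(2))$. Its proof only uses three facts. First, a monomial $x^u\in M$ of minimal degree lies in every $\qq\in\min(I)\setminus\{\pp\}$. Second, $x^u\notin\pp$, since $\pp$ is prime and contains no variables. Third, $(I:x^u)=\bigcap_{\qq\in\min(I),\,x^u\notin\qq}\qq=\pp$, using that $I$ is radical (\Cref{minimal primes LG2}). None of these uses any property of the ground field.

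\emph{Step 3.} By \Cref{lem: LG2 M and gamma}, $\gamma(L_{G}^{\RR}(2))\leq\gamma_{c,n}(G)$. Chaining this with Step 2 gives the claim.

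One bookkeeping point needs care. The hypergraph $\Delta$ from \Cref{hypergraph delta} lives on the $2n$ variables, with edges $S(\qq)=\{x_i,y_i:i\in S\}$. A vertex set $T\subseteq[n]$ should therefore be translated into the variable set $\{x_i:i\in T\}$, and the corresponding monomial is $\prod_{i\in T}x_i$. This monomial meets every edge $S(Q_S(G))$ with $S\neq\emptyset$ exactly when $T\cap S\neq\emptyset$, so it is a transversal of $\Delta$ of size $|T|$. This is the identification used implicitly in \Cref{lem: LG2 M and gamma}, and it is the only point requiring attention; the rest is a direct chaining of earlier results.

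Concretely, take $T\in\dcng$ with $|T|=\gamma_{c,n}(G)$. Then $\prod_{i\in T}x_i$ lies in every $Q_S(G)$ with $S\neq\emptyset$, and is therefore a monomial of $M$. Since it is not in $I_{K_n}$, it gives $(L_G^{\RR}(2):\prod_{i\in T}x_i)=I_{K_n}$, and hence the bound.
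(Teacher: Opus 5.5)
Your proposal is correct and is essentially the paper's proof. You chain three results: the coordinate-saturation of $L_G^{\mathbb{R}}(2)$ with $I_{K_n}$ as the distinguished minimal prime, the inequality $\vn_{\mathfrak{p}}(I)\le\gamma(I)$ (which, as you rightly note, needs neither algebraic closedness nor radicality of $I+M$), and the bound $\gamma(L_G^{\mathbb{R}}(2))\le\gamma_{c,n}(G)$; your translation of a vertex set $T$ into the variables $\{x_i : i\in T\}$ simply makes explicit what the paper's proof of that last lemma does implicitly.
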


\begin{proof}
By Lemma~\ref{lem: LG2 is coordinate saturated}, $L_{G}^{\mathbb{R}}(2)$ is \emph{coordinate-saturated} and $I_{K_n}$ is its unique minimal prime containing no indeterminates. Then, applying Theorem~\ref{thm: main theorem coordinate-saturated ideals}, we obtain that $\vlg\leq \gamma(L_{G}^{\mathbb{R}}(2))$. The result then follows from Lemma~\ref{lem: LG2 M and gamma}, which gives $\gamma(L_{G}^{\mathbb{R}}(2))\leq \gamma_{c,n}(G)$.
\end{proof}

The below corollary is immediate.
\begin{corollary}
Let $G$ be a non-bipartite simple graph. Then $\vn(L_{G}^{\mathbb{R}}(2))\leq \gamma_{c,n}(G)$.
\end{corollary}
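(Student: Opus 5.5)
The statement is the corollary $\vn(L_{G}^{\mathbb{R}}(2))\leq \gamma_{c,n}(G)$ for a non-bipartite graph $G$. The plan is to bound the global $\vn$-number by a single localized one and then invoke Theorem~\ref{LSS-over-real}.

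First, recall that $\vn(I)=\min\{\vn_{\pp}(I):\pp\in \Ass(I)\}$ directly from the definitions. Indeed, any $f$ with $(I:f)=\pp$ for some associated prime $\pp$ is admissible in both minima.

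Next, we need $I_{K_n}\in\Ass(L_{G}^{\mathbb{R}}(2))$. By Theorem~\ref{minimal primes LG2}, $L_{G}^{\mathbb{R}}(2)$ is radical with minimal primes $Q_S(G)$ for $S\in\MG$. Since $\emptyset\in\MG$ and $G$ is non-bipartite, $Q_\emptyset(G)=I_{K_n}$ is one of these primes, as already noted before Lemma~\ref{lem: LG2 is coordinate saturated}. Minimal primes are associated primes, so $I_{K_n}\in\Ass(L_{G}^{\mathbb{R}}(2))$, and therefore
\[
\vn(L_{G}^{\mathbb{R}}(2))\leq \vlg .
\]

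Finally, Theorem~\ref{LSS-over-real} gives $\vlg\leq\gamma_{c,n}(G)$, and combining the two inequalities yields the claim.

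There is no real obstacle here. The only point to check is that $\dcng\neq\emptyset$, so that $\gamma_{c,n}(G)$ is finite. Since $G$ is connected and non-bipartite (the standing assumption of this section), $T=[n]$ is dominating and $G[T]=G$ is connected and non-bipartite, so $[n]\in\dcng$.
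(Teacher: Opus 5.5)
Your proposal is correct and is exactly the argument the paper intends when it calls the corollary immediate: $I_{K_n}=Q_\emptyset(G)$ is a minimal, hence associated, prime of $L_{G}^{\mathbb{R}}(2)$, so $\vn(L_{G}^{\mathbb{R}}(2))\leq \vlg$, and Theorem~\ref{LSS-over-real} finishes. Your extra check that $[n]\in\dcng$, so that $\gamma_{c,n}(G)$ is finite under the section's standing connectedness assumption, is a harmless addition.
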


\begin{question}
For which graphs $G$ does it hold that $\vlg=\gamma_{c,n}(G)$?
\end{question}

\section{v-number of parity binomial edge ideals}\label{parity binomial edge ideals}

In this section, we study the v-number of the parity binomial edge ideal $\II_{G}$. We do this by following the framework developed in~\Cref{framework coordinate saturated}. Note that parity binomial edge ideals are not coordinate-saturated, however, most of the methods presented in~\Cref{framework coordinate saturated} also apply to this setting, as we will see in this section.
Throughout this section, we fix $G$ to be a connected and non-bipartite graph with vertex set $[n]$ and we fix $\KK$ an algebraically closed field with $\textup{char}(\KK)\neq 2$. In the case where $G$ is bipartite, the ideal $\II_{G}$ is isomorphic to the binomial edge ideal of $G$, see \textup{\cite[Corollary~6.2]{bms24}} and the problem reduces to the v-number of binomial edge ideals of bipartite graphs, which is well studied, see \cite{ass24, dey2024v, jaramillo2024connected, liwski2025v}. First, we present some of the results and definitions from \cite{kst16}. Let $G$ be a simple graph on $[n]$. Then we define its saturation $\JJ_{G}$ as follows:
\[
\JJ_{G}:=\II_{G}:\Big(\prod_{i\in [n]}x_{i}y_{i}\Big)^{\infty}.
\]

\begin{proposition}\cite[Proposition~2.7]{kst16}\label{prop: generators of JG} Let $G$ be a simple graph. Then
\[
\begin{aligned}
\JJ_G
={}&
\bigl\langle
x_i x_j-y_i y_j
:\text{ there exists an odd $(i,j)$-walk in }G
\bigr\rangle \\
&+
\bigl\langle
x_i y_j-x_j y_i
:\text{ there exists an even $(i,j)$-walk in }G
\bigr\rangle .
\end{aligned}
\]
\end{proposition}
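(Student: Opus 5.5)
The statement is Kahle--Sarmiento--Windisch's description of the saturation $\JJ_G=\II_G:(\prod_i x_iy_i)^\infty$. I would prove it by establishing the two inclusions separately. Throughout I write $J'$ for the ideal on the right-hand side.

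\emph{Step 1: $J'\subseteq \JJ_G$.} I would induct on the length of the walk, proving two statements simultaneously:
\begin{itemize}
\item if there is an odd $(i,j)$-walk, then $x_ix_j-y_iy_j\in\JJ_G$;
\item if there is an even $(i,j)$-walk, then $x_iy_j-x_jy_i\in\JJ_G$.
\end{itemize}
The base case is a walk of length $1$: an edge $\{i,j\}$ gives the generator $x_ix_j-y_iy_j\in\II_G$. For the inductive step, extend a walk from $i$ to $k$ by the edge $\{k,j\}$. If the $(i,k)$-walk is odd, then
\[
x_j(x_ix_k-y_iy_k)+y_i(x_ky_j-\ldots)
\]
type combinations work. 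Concretely, one checks the identity
\[
x_k(x_iy_j-x_jy_i)=y_j(x_ix_k-y_iy_k)-y_i(x_kx_j-y_ky_j).
\]
Since $x_k$ is a unit modulo the saturation, this gives $x_iy_j-x_jy_i\in\JJ_G$, which is the required parity-even relation. Similarly, if the $(i,k)$-walk is even, the identity
\[
x_k(x_ix_j-y_iy_j)=x_i(x_kx_j-y_ky_j)+y_j(x_iy_k-x_ky_i)
\]
gives $x_ix_j-y_iy_j\in\JJ_G$. In both cases the parity flips correctly. Walks may revisit vertices, including the degenerate case $i=j$, in which the relations become $x_i^2-y_i^2$ or $0$. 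These cases need no separate treatment, because the identities are purely algebraic.

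\emph{Step 2: $\JJ_G\subseteq J'$.} Since $\II_G\subseteq J'$ (every edge is an odd walk), it suffices to show that $J'$ is saturated with respect to $\prod_i x_iy_i$, that is, $J':x_iy_i=J'$ for every $i$. My plan is to work on each connected component $G'$ of $G$ and identify $J'$ with a lattice-type ideal. Consider the torus map
\[
x_i\mapsto t_i s,\qquad y_i\mapsto t_i s\,\epsilon^{\,\cdot},
\]
or, more cleanly, pass to the coordinates $u_i=x_i+y_i$, $w_i=x_i-y_i$, which is legitimate since $\operatorname{char}\KK\neq2$. Then the idea depends on the component.

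\emph{Non-bipartite component.} Every pair $i,j$ is joined by walks of both parities, since $i=j$ is allowed. So the restriction of $J'$ is generated by all $x_ix_j-y_iy_j$ and $x_iy_j-x_jy_i$, including $x_i^2-y_i^2$. I expect this to be the ideal of a union of two toric varieties, $\{y=x\cdot\text{const}\}$ with $y_i=\pm x_i$ carrying a consistent sign. That ideal is radical and equals $P_+\cap P_-$, where
\[
P_\pm=(y_i\mp x_i:\ i)\cap(\text{rank-one conditions}),
\]
and both $P_\pm$ are primes containing no variable, so the saturation does nothing.

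\emph{Bipartite component.} Here the parity of walks is fixed by the bipartition, and $J'$ is the ideal of $2\times2$ minors of a $2\times n$ matrix after swapping $x_j\leftrightarrow y_j$ on one side of the bipartition. This is a prime ideal not containing any variable.

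Finally, $J'$ is a sum of such ideals in disjoint variable sets. Over an algebraically closed field, the minimal primes are sums of primes that contain no variables. The main obstacle is showing that $J'$ itself (not merely its radical) is an intersection of such primes in the non-bipartite case. For this I would compute a Gröbner basis, with squarefree-type leading terms, to prove radicality, or simply invoke the cited description from \cite{kst16}.
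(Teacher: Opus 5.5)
Your Step 1 is correct. Both identities check out, and the cancellation works because $x_k$ is a nonzerodivisor (not a unit) modulo $\JJ_G$. The reduction in Step 2 to showing that $J'$ is saturated is also right. So is the bipartite case: after the swap you get the prime ideal of $2\times 2$ minors, which is generated in degree $2$. (The paper itself gives no argument here; it only cites \cite{kst16}.)

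The genuine gap is the non-bipartite component. That is where the content of the statement lies, and you leave it as an expectation. What you do write there is also wrong. The ideal of $2\times2$ minors is contained in both $(y_i-x_i:i)$ and $(y_i+x_i:i)$, so both of your $P_\pm$ collapse to that determinantal ideal, which does not even contain $x_i^2-y_i^2$. The actual components are the linear primes $\pp^{+}(G)$ and $\pp^{-}(G)$. Neither remedy you propose closes the gap. No Gr\"obner computation is given. Invoking \cite{kst16} either assumes the statement itself, or, if you mean the description of the minimal primes of $\JJ_G$, says nothing about $J'$: you would still need $\pp^{+}(G)\cap\pp^{-}(G)\subseteq J'$.

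That missing inclusion is one line in the coordinates you introduced. With $u_i=x_i+y_i$ and $w_i=x_i-y_i$,
\[
u_iw_j=(x_ix_j-y_iy_j)-(x_iy_j-x_jy_i),\qquad u_jw_i=(x_ix_j-y_iy_j)+(x_iy_j-x_jy_i),\qquad u_iw_i=x_i^2-y_i^2 .
\]
On a connected non-bipartite component, every pair $i,j$ (including $i=j$) is joined by walks of both parities. The reason is the odd cycle, not that $i=j$ is allowed. So the restriction of $J'$ is the squarefree monomial ideal $(u_iw_j)=(u_i)\cap(w_j)=\pp^{+}\cap\pp^{-}$. Neither of these primes contains any $x_k$ or $y_k$, so this piece is saturated.

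For the gluing step, minimal primes are not enough to conclude saturation; you would need all associated primes. It is simpler to note that if $f_1,f_2$ are nonzerodivisors on $A_1,A_2$, then $f_1f_2$ is a nonzerodivisor on $A_1\otimes_{\KK}A_2$. Hence saturation passes to sums of ideals in disjoint sets of variables, over any field.

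With these repairs your argument is complete for $\operatorname{char}\KK\neq 2$, which is the setting in which the paper uses the statement. A characteristic-free proof would instead identify $\JJ_G$ with the lattice ideal of the lattice spanned by the exponent differences of the generators of $\II_G$, and then check that your binomials connect every fiber.
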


We also define the ideals
\[\pp^{+}(G):=\langle x_{i}+y_{i}: i \in [n] \rangle
\quad \text{and} \quad
\pp^{-}(G):=\langle x_{i}-y_{i}: i \in [n] \rangle.
\]

\begin{notation}\label{CGS}For a subset $S\subseteq [n]$, $\mm_{S}:=\langle x_{s},y_{s}:s\in S\rangle$. We also let $c_{0}(G)$ and $c_{1}(G)$ denote the number of bipartite and non-bipartite connected components of $G$, respectively. Denote by $\mathcal{C}_{G_{S}}(s)$ the set of those connected components of $G_{S}$ which are joined when adding $s$.
\end{notation}

\begin{proposition}\cite[Proposition~4.2]{kst16}\label{prop: minimal primes of JG}
Let $G$ be a graph consisting of bipartite connected components $B_{1},\ldots,B_{c_{0}(G)}$ and non-bipartite connected components $N_{1},\ldots,N_{c_{1}(G)}$. Then the ideal $\JJ_{G}$ is radical and its minimal primes are the $2^{c_{1}(G)}$ ideals
\[\sum_{i=1}^{c_{0}(G)}\JJ_{B_{i}}+\sum_{i=1}^{c_{1}(G)}\pp^{\sigma_{i}}(N_{i}),\]
where $\sigma$ ranges over $\{+,-\}^{c_{1}(G)}$.
\end{proposition}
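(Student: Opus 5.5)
The plan is to reduce to a single connected component, handle the bipartite and non-bipartite cases separately by an explicit change of coordinates, and then recombine over the algebraically closed field $\KK$.

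\textbf{Reduction to connected graphs.} Write $\mathbf{x}_C,\mathbf{y}_C$ for the variables indexed by the vertices of a component $C$. The generators of $\II_G$ split into blocks, one for each component, and these blocks involve pairwise disjoint sets of variables. I would use Proposition~\ref{prop: generators of JG} to see that the same holds for $\JJ_G$. A walk never leaves its component, so every binomial listed there lies in the ring of a single component. Hence
\[
\JJ_G=\sum_{i}\JJ_{B_i}+\sum_i \JJ_{N_i},
\]
with each summand generated in its own polynomial ring.

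Over an algebraically closed field, for ideals in disjoint sets of variables the following hold. A sum of radical ideals is radical, since $\otimes_\KK$ of reduced finitely generated $\KK$-algebras is reduced. A sum of primes is prime, since the tensor product of domains is a domain. The minimal primes of such a sum are exactly the sums of minimal primes of the summands. So it suffices to show two things: $\JJ_B$ is prime for connected bipartite $B$, and $\JJ_N=\pp^{+}(N)\cap\pp^{-}(N)$ for connected non-bipartite $N$. The count $2^{c_1(G)}$ then follows.

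\textbf{Bipartite component.} Let $B$ have bipartition $V_1\sqcup V_2$. An $(i,j)$-walk is odd exactly when $i,j$ lie on opposite sides, and even exactly when they lie on the same side; connectivity guarantees such a walk always exists. Apply the automorphism swapping $x_j\leftrightarrow y_j$ for $j\in V_2$. Then each generator $x_ix_j-y_iy_j$ with $i\in V_1$, $j\in V_2$ becomes $x_iy_j-y_ix_j$. The same-side generators are unchanged up to sign. So the image of $\JJ_B$ is the ideal of all $2\times 2$ minors of a generic $2\times|V(B)|$ matrix. This determinantal ideal is well known to be prime.

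\textbf{Non-bipartite component.} Let $N$ be connected and non-bipartite. An odd cycle together with connectivity gives both an odd and an even walk between any $i,j$, including $i=j$. Hence $\JJ_N$ is generated by all $x_ix_j-y_iy_j$ and all $x_iy_j-x_jy_i$, for $i,j\in V(N)$, and in particular contains $x_i^2-y_i^2$.

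Since $\operatorname{char}\KK\neq 2$, substitute $u_i=x_i+y_i$ and $w_i=x_i-y_i$. Then $\pp^{+}(N)=(w_i)$ and $\pp^{-}(N)=(u_i)$ are linear primes. Their intersection is the monomial ideal $(u_iw_j: i,j)$. One inclusion comes from the identity
\[
(x_i+y_i)(x_j-y_j)=(x_ix_j-y_iy_j)-(x_iy_j-x_jy_i),
\]
which shows every $u_iw_j\in\JJ_N$. For the reverse inclusion, each generator vanishes modulo $x=y$ and modulo $x=-y$, so $\JJ_N\subseteq\pp^+\cap\pp^-$.

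Thus $\JJ_N=\pp^{+}(N)\cap\pp^{-}(N)$. It is radical, and its minimal primes are exactly these two incomparable primes.

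\textbf{Main obstacle.} I expect the main difficulty to be bookkeeping rather than substance. Specifically, one must check that Proposition~\ref{prop: generators of JG} really decomposes componentwise, and one must justify the tensor-product facts about radicality and primality of sums over an algebraically closed field.
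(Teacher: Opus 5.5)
Your argument is correct, but note that the paper gives no proof of this statement. It is imported from \cite{kst16}, like the generator description in Proposition~\ref{prop: generators of JG}, and then used as a black box. So what you have written is an independent, self-contained route rather than a reconstruction of the paper's argument.

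It needs only one external input, Proposition~\ref{prop: generators of JG}, plus two explicit coordinate changes:
\begin{itemize}
\item the swap $x_j\leftrightarrow y_j$ on one side of the bipartition, which identifies $\JJ_B$ with the ideal of $2\times 2$ minors of a generic $2\times|V(B)|$ matrix;
\item the substitution $u_i=x_i+y_i$, $w_i=x_i-y_i$, which turns $\JJ_N$ into the monomial ideal $(u_iw_j)$.
\end{itemize}
What this buys is transparency. It shows exactly where $\operatorname{char}\KK\neq 2$ is used: the second substitution must be invertible, and that is what makes $\JJ_N$ radical with two distinct components.

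Your reduction step uses algebraic closedness for the tensor-product facts. This matches the standing hypothesis of that section. The conclusion also survives over any field of characteristic $\neq 2$, because the factor rings that occur (generic $2\times m$ determinantal rings and polynomial rings) are geometrically integral.

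Two cosmetic points:
\begin{itemize}
\item In your notation, $\pp^{+}(N)=(u_i)$ and $\pp^{-}(N)=(w_i)$, not the reverse. This is harmless, since only their intersection is used.
\item You should say explicitly why the $2^{c_1(G)}$ sums are pairwise distinct and incomparable. Contract to the variables of a single component: for primes $P_1,P_2$ in disjoint sets of variables, $P_1+P_2$ contracts back to $P_1$ by flatness.
\end{itemize}
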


\begin{definition}\cite[Definitions~4.5 and~4.11]{kst16}\normalfont \label{def: sign split}
\ Let $\mathfrak{s}(G)=2c_{0}(G)+c_{1}(G)$. A set $S\subset [n]$ is a {\em disconnector} of $G$ if $\mathfrak{s}(G_{S})>\mathfrak{s}(G_{S\setminus \{s\}})$ for all $s\in S$.  A minimal prime $\pp$ of $\JJ_{G_{S}}$ is called {\em sign-split} if for all $s\in S$ such that $\mathcal{C}_{G_{S}}(s)$ contain no bipartite graphs, the prime summands of $\pp$ corresponding to connected components in $\mathcal{C}_{G_{S}}(s)$ are not all equal to $\pp^{+}$ or all equal to $\pp^{-}$. 
\end{definition}

The following theorem from \cite{kst16} presents the primary decomposition of $\II_{G}$.

\begin{theorem}\label{thm:minimal primes}\cite[Theorem~4.15]{kst16}
The minimal primes of $\II_{G}$ are the ideals $\mm_{S}+\pp$, where $S\subset [n]$ is a disconnector of $G$ and $\pp$ is a sign-split minimal prime of $\JJ_{G_{S}}$.
\end{theorem}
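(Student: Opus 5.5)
The approach is to describe every prime containing $\II_G$ in terms of which coordinates it contains, reduce to the saturations $\JJ_{G_S}$, and then decide which of the resulting candidate primes are minimal.

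\textbf{Step 1: reduction to primes of the form $\mm_S+\qq$.} Let $P$ be a minimal prime of $\II_G$, and set $S=\{i\in[n]: x_i\in P \text{ or } y_i\in P\}$.

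I will first show that a minimal prime contains both $x_i$ and $y_i$ for every $i\in S$. Suppose $x_i\in P$ and $\{i,j\}\in E(G)$. Then $y_iy_j\in P$, so $y_i\in P$ or $y_j\in P$. Propagating this along edges gives the following picture: if $P$ contained only one of $x_i,y_i$ for some $i$, then the ideal $\mm_S+\II_{G_S}$ and its minimal primes would have to be compared with $P$. I would use the inclusion
\[
\II_G\subseteq \mm_S+\II_{G_S}
\]
to produce a strictly smaller prime containing $\II_G$ inside $P$. This step needs some case bookkeeping, and the hypothesis $\operatorname{char}(\KK)\neq 2$ enters here.

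Granting this, $P\supseteq \mm_S+\II_{G_S}$, and $P$ contains no variable indexed by $[n]\setminus S$. Hence $P$ contains
\[
\II_{G_S}:\Big(\prod_{i\notin S}x_iy_i\Big)^\infty=\JJ_{G_S}.
\]
By Proposition~\ref{prop: minimal primes of JG}, $P$ therefore contains $\mm_S+\qq$ for some minimal prime $\qq$ of $\JJ_{G_S}$. Each such $\mm_S+\qq$ is prime and contains $\II_G$. So every minimal prime of $\II_G$ has the form $\mm_S+\qq$ with $\qq\in\min(\JJ_{G_S})$.

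\textbf{Step 2: deciding which candidates are minimal.} The remaining task is to determine exactly which $\mm_S+\qq$ are minimal among these candidates. Since $\qq$ contains no variables, an inclusion $\mm_T+\qq'\subseteq \mm_S+\qq$ forces $T\subseteq S$. As in the usual treatment of binomial edge ideals, it should suffice to test $T=S\setminus\{s\}$ for each $s\in S$; this reduction can be checked using the explicit generators in Proposition~\ref{prop: generators of JG}.

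Fix $s\in S$ and set $T=S\setminus\{s\}$. Adding $s$ back to $G_S$ merges the components in $\mathcal{C}_{G_S}(s)$ into one component $H$ of $G_T$. There is then some $\qq'\in\min(\JJ_{G_T})$ with $\mm_T+\qq'\subseteq\mm_S+\qq$ exactly when the summand of $\qq'$ on $H$, after setting $x_s=y_s=0$, lies in the sum of the summands of $\qq$ on $\mathcal{C}_{G_S}(s)$. I would verify three cases:
\begin{enumerate}[label=\rm(\alph*)]
\item If $H$ is bipartite, this happens precisely when all components in $\mathcal{C}_{G_S}(s)$ are bipartite and they merge into a single bipartite component.
\item If $H$ is non-bipartite and some component in $\mathcal{C}_{G_S}(s)$ is bipartite, the summand of $\qq'$ on $H$ is $\pp^{\pm}(H)$, which is not contained in the summand $\JJ_{B}$ of that bipartite component, because $\JJ_B$ contains no linear forms. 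So no inclusion occurs.
\item If all components in $\mathcal{C}_{G_S}(s)$ are non-bipartite, an inclusion occurs precisely when their summands in $\qq$ are all $\pp^+$ or all $\pp^-$; then $\pp^{\pm}(H)$ is contained in their sum.
\end{enumerate}
Translating through $\mathfrak{s}=2c_0+c_1$, the absence of the inclusions in cases (a) and (b) for every $s\in S$ is exactly the disconnector condition $\mathfrak{s}(G_S)>\mathfrak{s}(G_{S\setminus\{s\}})$. The absence of the inclusion in case (c) is exactly the sign-split condition of Definition~\ref{def: sign split}.

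\textbf{Main obstacle.} I expect the hardest parts to be the minimality analysis in Step~2, in particular justifying that one only needs to test $T=S\setminus\{s\}$ and handling a mixture of bipartite and non-bipartite merged components, together with the first claim of Step~1 that minimal primes contain $x_i$ and $y_i$ simultaneously.
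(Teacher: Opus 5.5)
The paper gives no proof of this statement; it quotes it from Kahle--Sarmiento--Windisch, so your outline has to stand on its own. The overall structure is right: first show every minimal prime has the form $\mm_S+\qq$ with $\qq\in\min(\JJ_{G_S})$, then compare candidates. However, two steps fail as written.

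\textbf{Step 1.} Your mechanism cannot produce a prime inside $P$. You define $S$ as the set of $i$ with $x_i\in P$ \emph{or} $y_i\in P$. In exactly the situation you want to exclude ($x_i\in P$, $y_i\notin P$), this gives $\mm_S\not\subseteq P$, hence $\mm_S+\II_{G_S}\not\subseteq P$.

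The missing idea is as follows. Let $S'=\{i:x_i,y_i\in P\}$, $W_x=\{i:x_i\in P,\ y_i\notin P\}$, $W_y=\{i:y_i\in P,\ x_i\notin P\}$, $W=W_x\cup W_y$ and $U=[n]\setminus(S'\cup W)$.
\begin{itemize}
\item For $i\in W_x$ and $j\in N_G(i)$, $y_iy_j\in P$ forces $y_j\in P$, and symmetrically for $W_y$. Hence $W$ is a union of components $C$ of $G_{S'}$, each bipartite with sides $C\cap W_x$ and $C\cap W_y$, and no edge joins $W$ to $U$.
\item Every term of every generator of $\JJ_C$ in Proposition~\ref{prop: generators of JG} is divisible by some $x_i$ with $i\in W_x$ or some $y_j$ with $j\in W_y$. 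So $\JJ_C\subseteq P$.
\item Set $Q:=\mm_{S'}+\big(P\cap\KK[x_u,y_u:u\in U]\big)+\sum_{C\subseteq W}\JJ_C$. This is prime, since each $\JJ_C$ becomes the ideal of $2$-minors after swapping $x_i\leftrightarrow y_i$ on one side. It satisfies $\II_G\subseteq Q\subseteq P$.
\item $Q$ contains none of the variables $x_i$ ($i\in W_x$) or $y_j$ ($j\in W_y$), because the $\JJ_C$ have no linear forms.
\end{itemize}
Minimality of $P$ then forces $W=\emptyset$. No assumption on the characteristic is used here. The hypothesis $\operatorname{char}(\KK)\neq 2$ is needed in your case (c), to separate $\pp^{+}$ from $\pp^{-}$.

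\textbf{Step 2.} Case (a) is wrong as stated. Once $H$ is bipartite, the members of $\mathcal{C}_{G_S}(s)$ are automatically bipartite and merge into one bipartite component, so (a) predicts an inclusion for every bipartite $H$.

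For a counterexample, take the path with edges $\{1,2\},\{2,3\}$ and $S=\{2\}$. The binomial $x_1y_3-x_3y_1$ lies in $\JJ_{G}$ (via the even walk through $2$). It does not lie in $(x_2,y_2)=\mm_{\{2\}}+\JJ_{\{1\}}+\JJ_{\{3\}}$, and $(x_2,y_2)$ is indeed a minimal prime.

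The correct criterion for bipartite $H$ is $|\mathcal{C}_{G_S}(s)|\le 1$. Walks through $s$ produce binomials in the variables of two distinct members, and these are not in $\sum_C\JJ_C$, since no $\JJ_C$ constrains the pair $(x_i,y_i)$ of a single vertex.

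Relatedly, your split of the final conditions is off. Let $b$ and $c$ be the numbers of bipartite and non-bipartite members of $\mathcal{C}_{G_S}(s)$. The disconnector condition at $s$ reads $b\ge 2$ if $H$ is bipartite, and ($b\ge 1$ or $c\ge 2$) otherwise. Its failure for $b=0$, $c=1$ is a case-(c) inclusion. So only the conjunction of (a)--(c) yields ``disconnector and sign-split''; it is not true that (a),(b) match the disconnector condition and (c) matches sign-split.

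Finally, the reduction to $T=S\setminus\{s\}$ is true but needs an argument. One has $\mm_T+\qq'\subseteq\mm_S+\qq$ if and only if $T\subseteq S$ and, for each component $H$ of $G_T$, one of the following holds:
\begin{itemize}
\item $H$ is bipartite and $H\setminus S$ is empty or a single component of $G_S$;
\item $H$ is non-bipartite with sign $\sigma$, and every component of $G_S$ inside $H$ is non-bipartite with sign $\sigma$.
\end{itemize}
Restricting this condition to the component of $G_{S\setminus\{s\}}$ through any $s\in S\setminus T$ yields an inclusion from level $S\setminus\{s\}$.
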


Given that we work under the assumption that 
$\text{char}(\mathbb{K}) \neq 2$, we can obtain radicality.

\begin{theorem}\cite[Theorem~5.5]{kst16}
The ideal $\II_{G}$ is radical.
\end{theorem}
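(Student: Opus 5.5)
The plan is to prove radicality by induction on $n=\size{V(G)}$, splitting off one vertex at a time with the standard decomposition
\[
I=(I:f^{\infty})\cap (I+(f^{k})),
\]
valid for $k$ large, applied successively to the variables $x_s$ and $y_s$ of a fixed vertex $s$. The base cases are graphs with no edges, or a single edge, where $\II_G$ is $0$ or a principal ideal generated by the irreducible quadric $x_ix_j-y_iy_j$; both are clearly radical. Throughout I will work in all of $\KK[\mathbf{x},\mathbf{y}]$ and use that $\operatorname{char}(\KK)\neq 2$. This hypothesis enters through the change of coordinates $x_i=u_i+v_i$, $y_i=u_i-v_i$, which transforms $x_ix_j-y_iy_j$ into $2(u_iv_j+u_jv_i)$, the permanental edge ideal $\Pi_G$ (cf.\ \cite[Remark 3.3]{zbMATH07352276}).

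First I would handle the part away from the coordinate hyperplanes. The full saturation $\JJ_G=\II_G:(\prod_i x_iy_i)^{\infty}$ is radical by Proposition~\ref{prop: minimal primes of JG}, and its generators are described by Proposition~\ref{prop: generators of JG}. Next I would handle the part on the coordinate hyperplanes. Modding out by all variables of a vertex $s$ gives $\II_G+\mm_{\{s\}}=\II_{G_{\{s\}}}+\mm_{\{s\}}$, which is radical by the induction hypothesis. The one-variable pieces $\II_G+(x_s)$ are less clean, since they contain $y_sy_j$ for each neighbour $j$ of $s$. I would therefore arrange the splitting with respect to the pair $\{x_s,y_s\}$ jointly, writing
\[
\II_G=(\II_G:(x_sy_s)^{\infty})\cap\big(\II_G+(x_s^k)\big)\cap\big(\II_G+(y_s^k)\big),
\]
and then show that $\II_G+(x_s)$ already contains $y_s\cdot(y_j:j\in N_G(s))$. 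This lets me reduce $\II_G+(x_s)$ to the two pieces $\II_G+\mm_{\{s\}}$ and $\II_G+(x_s)+(y_j:j\in N_G(s))$, and the latter is again of the form ``parity ideal of a smaller graph plus variables'', so induction applies to it.

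The key technical step, and the one I expect to be the main obstacle, is proving the stabilization $\II_G:x_s=\II_G:x_s^{2}$ (and similarly for $y_s$). This is what ensures that the exponents $k$ above can be taken equal to $1$; without it, the non-radical pieces $\II_G+(x_s^{k})$ survive. I would attempt it with a Gröbner basis argument. Since $\II_G$ is homogeneous for the $\ZZ^{n}$-grading $\deg x_i=\deg y_i=e_i$, I would fix a lexicographic order in which $x_s,y_s$ are smallest. I would then check that no leading term of the reduced Gröbner basis (made up of walk binomials) is divisible by $x_s^2$ in a way incompatible with the colon. Concretely, the goal is to show that $\ini(\II_G):x_s$ is generated by the leading terms of elements of $\II_G:x_s$, in the spirit of Propositions~\ref{prop: ini(I+M)} and~\ref{prop: sum of initials and upper closed}. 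If this proves too delicate, the fallback is to verify directly that the intersection of the candidate minimal primes from Theorem~\ref{thm:minimal primes} equals $\II_G$ on each multidegree, which is a linear-algebra check in $\ZZ^n$-graded pieces.

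Once the colon stabilizes, the rest is mechanical. Each intersectand in the splitting is radical: the saturated piece because it equals $\JJ_G$ localized at the remaining hyperplanes, and the other pieces by induction. An intersection of radical ideals is radical, which gives the result.
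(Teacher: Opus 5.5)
The paper gives no proof of this statement; it imports it from \cite[Theorem~5.5]{kst16}. Judged on its own, your argument has genuine gaps.

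\textbf{The saturated piece is misidentified.} Saturating at a single vertex does not produce $\JJ_G$. It removes only what lives on the hyperplanes of $s$; everything living on the coordinate hyperplanes of other vertices survives. Take the path $1-2-3$ and the leaf $s=1$. Then $\II_G=(x_1x_2-y_1y_2,\,x_2x_3-y_2y_3)\subseteq\mm_{\{2\}}$, and $\mm_{\{2\}}$ is a prime not containing $x_1y_1$, so $\II_G:(x_1y_1)^{\infty}\subseteq\mm_{\{2\}}$. On the other hand $x_1y_3-x_3y_1\in\JJ_G\setminus\mm_{\{2\}}$, since $y_2(x_1y_3-x_3y_1)\in\II_G$. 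A posteriori this saturation is $\II_G$ itself, because a leaf lies in no disconnector. Your justification, that the piece is ``$\JJ_G$ localized at the remaining hyperplanes'', only says that further inverting $\prod_{i\neq s}x_iy_i$ yields the radical ideal $\JJ_G$. That says nothing about radicality of $\II_G:(x_sy_s)^{\infty}$ itself, so the induction is circular at this point.

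The repair is to saturate at all vertices at once. Iterate $J=(J:v^{\infty})\cap(J+(v^k))$ over all $2n$ variables, and use $\II_G+(v^k)\subseteq J'+(v^k)$ for each intermediate saturation $J'$. This gives $\II_G=\JJ_G\cap\bigcap_{s\in[n]}\bigl((\II_G+(x_s^k))\cap(\II_G+(y_s^k))\bigr)$ for $k\gg0$. Only then does Proposition~\ref{prop: minimal primes of JG} apply to the saturated piece.

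\textbf{The passage from $k\gg 0$ to $k=1$ is left open, and it carries the whole theorem.} What is needed is not only $\II_G:x_s=\II_G:x_s^2$, but $J':v=J':v^{2}$ for every intermediate saturation $J'$ in the chain. For radical ideals these equalities are automatic. Conversely, the $k=1$ decomposition together with radicality of its pieces is the theorem. So this step is essentially a restatement of what you want to prove.

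Your Gr\"obner sketch does not supply it. The identity $\ini(I:x_s)=\ini(I):x_s$ holds for a degree reverse lexicographic order with $x_s$ last, not for lex. Even with the right order, deciding whether some minimal generator of $\ini(\II_G)$ is divisible by $x_s^2$ requires an explicit Gr\"obner basis of $\II_G$, which you do not have. Unlike $J_G$, $\II_G$ does not in general admit a squarefree one. The multidegree ``fallback'' is not a finite check either; it needs exactly the structural argument that is missing.

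\textbf{The induction hypothesis is too weak.} Even granting $k=1$, the ideal $\II_G+(x_s)+(y_j:j\in N_G(s))=\II_{G\setminus s}+(x_s)+(y_j:j\in N_G(s))$ kills variables of vertices of $G\setminus s$. It contains monomials $x_jx_k$, so radicality of $\II_{G\setminus s}$ does not cover it. This point is fixable: induct over the class $\II_H+(x_a:a\in A)+(y_b:b\in B)$. That class is closed under your splitting, because $\II_H+(x_a)+L=\bigl(\II_{H\setminus a}+\mm_{\{a\}}+L\bigr)\cap\bigl(\II_{H\setminus a}+(x_a)+(y_k:k\in N_H(a))+L\bigr)$ whenever $L$ is generated by variables not belonging to $a$.
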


Since the empty set is a disconnector and $G$ is non-bipartite, it follows that the ideals 
\[\pp^{+}(G):=\langle x_{i}+y_{i}: i \in [n] \rangle
\quad \text{and} \quad
\pp^{-}(G):=\langle x_{i}-y_{i}: i \in [n] \rangle
\]

are minimal primes of $\II_{G}$. In this section, we focus on the following question.

\begin{question}\label{question}
Compute the localized $\vn$-number $\vpi$.
\end{question}

Note that $\vpi=\textup{v}_{\pp^{-}(G)}(\II_{G})$
since the involution $y_{i}\mapsto -y_{i}$ maps $\pp^{+}(G)$ to $\pp^{-}(G)$. We start by introducing some notation and definitions.

\begin{definition}\normalfont\label{def: q-}
For each minimal prime $\qq\in\min(\II_G)$, we associate the subsets
\[
\qq(+):=\{\,i\in[n] : x_i+y_i\in\qq\,\},\qquad
\qq(-):=\{\,i\in[n] : x_i-y_i\in\qq\,\}.
\]
We define the hypergraph $\Delta_{G}$ on $[n]$ as follows:
\[
\Delta_{G}:=\{\qq(-): \qq\in \min(\II_{G})\setminus \{\pp^{+}(G)\}\}.
\]
We also introduce the combinatorial invariant $\nu_{G}$ as follows:
\[\nu_{G}:=\min \{\size{T}: \text{ $T$ is a transversal of $\Delta_{G}$}\}.\]
Note that $\nu_G$ is well defined, since every edge of the form
$\qq(-)$ in $\Delta_G$ is nonempty. Indeed, every minimal
prime $\qq\in\min(\II_G)$ distinct from $\pp^{+}(G)$ contains 
a polynomial of the form $x_i-y_i$ 
for some $i\in[n]$; see \Cref{thm:minimal primes}.
\end{definition}

Our goal in what follows is to prove that $\vpi=\nu_{G}$. We start by proving that $\nu_{G}$ provides an upper bound for $\vpi$. The following proposition is an adaptation of Proposition~\ref{prop: inequality}, but does not follow from it since $\II_{G}$ is not coordinate-saturated.

\begin{proposition}\label{prop: less or equal}
Let $G$ be a non-bipartite simple graph. Then $\vpi \leq \nu_{G}$.
\end{proposition}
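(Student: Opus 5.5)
Following the proof of Proposition~\ref{prop: inequality}, we exhibit an explicit polynomial $f$ of degree $\nu_G$ with $(\II_G:f)=\pp^{+}(G)$. The naive monomial choice fails here, because the relevant linear forms are $x_i-y_i$ rather than indeterminates. So let $T$ be a transversal of $\Delta_G$ with $\size{T}=\nu_G$, and set
\[
f:=\prod_{i\in T}(x_i-y_i),
\]
which is homogeneous of degree $\nu_G$.

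\textbf{Step 1: $f$ lies in every other minimal prime.} Let $\qq\in\min(\II_G)\setminus\{\pp^{+}(G)\}$. Since $T$ is a transversal of $\Delta_G$, there is some $i\in T\cap\qq(-)$. By definition of $\qq(-)$ we have $x_i-y_i\in\qq$, so $f\in\qq$. Hence
\[
f\in\bigcap_{\qq\in\min(\II_G)\setminus\{\pp^{+}(G)\}}\qq .
\]

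\textbf{Step 2: $f\notin\pp^{+}(G)$.} The prime $\pp^{+}(G)$ is generated by the linear forms $x_i+y_i$. Modulo $\pp^{+}(G)$ we have $y_i\equiv -x_i$, so $x_i-y_i\equiv 2x_i$. This is nonzero because $\operatorname{char}(\KK)\neq 2$. Therefore $f\equiv 2^{\nu_G}\prod_{i\in T}x_i\neq 0$ in $R/\pp^{+}(G)\cong\KK[x_1,\ldots,x_n]$, and so $f\notin\pp^{+}(G)$.

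\textbf{Step 3: conclude.} Since $\II_G$ is radical (\cite[Theorem~5.5]{kst16}), we can write $\II_G=\bigcap_{\qq\in\min(\II_G)}\qq$. Then
\[
(\II_G:f)=\bigcap_{\qq}(\qq:f)=\pp^{+}(G),
\]
because $(\qq:f)=R$ whenever $f\in\qq$, and $(\pp^{+}(G):f)=\pp^{+}(G)$ because $\pp^{+}(G)$ is prime and $f\notin\pp^{+}(G)$. It follows that $\vpi\leq\deg f=\nu_G$.

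The only subtle point is Step 2, which is exactly where the hypothesis $\operatorname{char}(\KK)\neq 2$ is used. The nonemptiness of each edge $\qq(-)$, noted after Definition~\ref{def: q-}, ensures that a transversal $T$ exists, so $\nu_G$ is attained.
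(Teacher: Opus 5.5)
Your proof is correct and follows the paper's argument essentially verbatim. The paper also takes $f_T=\prod_{i\in T}(x_i-y_i)$ for a minimum transversal $T$ of $\Delta_G$, shows that $f_T$ lies in every minimal prime other than $\pp^{+}(G)$ but not in $\pp^{+}(G)$, and then uses radicality of $\II_G$ to conclude $(\II_G:f_T)=\pp^{+}(G)$; your Step 2 helpfully spells out where $\operatorname{char}(\KK)\neq 2$ is needed, which the paper only asserts by saying that no factor of $f_T$ lies in $\pp^{+}(G)$.
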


\begin{proof}
Let $T\subset [n]$ be a transversal of $\Delta_{G}$ of minimum size, such that $\nu_{G}=\size{T}$. Let
\[f_{T}:=\prod_{i\in T}(x_{i}-y_{i}).\]
We will show that $(\II_{G}: f_{T})=\pp^{+}(G)$. To prove this we must show that $\pp^{+}(G)f_{T}\subset \II_{G}$ and $f_{T}\notin \pp^{+}(G)$. We first show that $\pp^{+}(G)f_{T}\subset \II_{G}$. Let $\qq$ be any minimal prime of $\II_{G}$ distinct from $\+$. Since $T$ is a transversal of $\Delta_{G}$, it follows that there exists some $i\in T$ such that $x_{i}-y_{i}\in \qq$. Since $x_{i}-y_{i}$ is a factor of $f_{T}$, it follows that $f_{T}\in \qq$. Moreover, since $\+ f_{T}\subset \+$, it follows that
\[\+ f_{T}\subset \bigcap_{\qq\in \min(\II_{G})}\qq=\II_{G}.\]
On the other hand, note that none of the factors of $f_{T}$ belong to $\+$, and since $\+$ is prime, it follows that $f_{T}\notin \+$. This proves that $(\II_G:f_T)=\pp^{+}(G)$.
Consequently,
\[\vpi=\min\{\deg(f): (\II_G:f)=\pp^{+}(G)\} \leq \deg(f_T) = |T| = \nu_G.\]
\end{proof}

We now focus on proving the other inequality. For this, we introduce a relevant monomial ideal. First, we introduce some notation.

\begin{notation}
For subsets $C,D\subseteq [n]$, we define $\displaystyle g_{C,D}:=\prod_{i\in C}x_{i}\prod_{j\in D}y_{j}\in \mathbb{K}[\bf{x},\bf{y}].$
\end{notation}

Recall the notion of transversals and of $\TT(H)$ from Definition~\ref{def: transversals}.

\begin{definition}
We define the monomial ideal associated to $\Delta_G$ by
\[
I_{\Delta_G}
:=
\bigl\langle
g_{C,D}
:
C\cup D\in \TT(\Delta_G),\;
C\cap D=\varnothing
\bigr\rangle
\subseteq
\mathbb{K}[\mathbf{x},\mathbf{y}].
\]
\end{definition}

The following result is an adaptation of Proposition~\ref{prop: I+M rad}, but does not follow from it.

\begin{proposition}\label{prop:saturation and radical}
Let $G$ be a non-bipartite simple graph. Then $(\II_{G}:\pp^{+}(G))\subseteq \sqrt{\II_{G}+I_{\Delta_{G}}}.$
\end{proposition}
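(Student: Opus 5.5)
We follow the strategy of Proposition~\ref{prop: I+M rad}: since $\KK$ is algebraically closed and $\II_G$ is radical, we have
\[(\II_G:\pp^{+}(G))=\bigcap_{\qq\in\min(\II_G)\setminus\{\pp^{+}(G)\}}\qq .\]
By Hilbert's Nullstellensatz it therefore suffices to prove the inclusion of varieties
\[
\VV(\II_G+I_{\Delta_G})\subseteq \bigcup_{\qq\in \min(\II_G)\setminus\{\pp^{+}(G)\}}\VV(\qq).
\]
Fix a point $(\mathbf{a},\mathbf{b})=(a_1,\ldots,a_n,b_1,\ldots,b_n)\in \VV(\II_G+I_{\Delta_G})$. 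If it avoids $\VV(\pp^{+}(G))$, it already lies in the union of the other components of $\VV(\II_G)$, and we are done. So assume $a_i+b_i=0$ for all $i$, i.e.\ $b_i=-a_i$.

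The key step is to show that there is some $\qq\in\min(\II_G)\setminus\{\pp^{+}(G)\}$ with $a_i=b_i=0$ for all $i\in\qq(-)$. Suppose not. Then for each such $\qq$ we pick $i_\qq\in\qq(-)$ with $(a_{i_\qq},b_{i_\qq})\neq(0,0)$, and we let $T$ be the set of these indices. Then $T$ is a transversal of $\Delta_G$. For each $i\in T$ we put $i$ into $C$ if $a_i\neq 0$, and into $D$ otherwise, in which case $b_i\neq 0$. This gives a disjoint decomposition $C\cup D=T\in\TT(\Delta_G)$. Hence $g_{C,D}\in I_{\Delta_G}$, yet $g_{C,D}(\mathbf{a},\mathbf{b})\neq 0$, a contradiction. (On $\VV(\pp^{+})$ we in fact have $a_i\neq0$ if and only if $b_i\neq 0$, so $D=\emptyset$ would suffice; the general $C,D$ is harmless.)

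It remains to show that the point lies in $\VV(\qq)$ for the $\qq$ just found. Here the analogue of Lemma~\ref{expression of PSI} is missing, since $\II_G$ is not coordinate-saturated, so we must work with the explicit description of Theorem~\ref{thm:minimal primes}: $\qq=\mm_S+\pp'$ with $S$ a disconnector and $\pp'$ a sign-split minimal prime of $\JJ_{G_S}$. By Proposition~\ref{prop: minimal primes of JG}, $\pp'$ is a sum of ideals $\JJ_{B}$ over bipartite components $B$ of $G_S$ and of ideals $\pp^{\pm}(N)$ over non-bipartite components $N$.

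We then check the generators of $\qq$ one at a time. First, $S\subseteq\qq(-)$ because $\mm_S\subseteq\qq$, so the point vanishes on $\mm_S$. Second, on any component $N$ carrying $\pp^{-}(N)$ we have $N\subseteq\qq(-)$, so the point is zero there and kills $\pp^{-}(N)$. Third, components carrying $\pp^{+}(N)$ vanish since $b_i=-a_i$. Finally, for bipartite components, the generators of $\JJ_B$ listed in Proposition~\ref{prop: generators of JG} are $x_ix_j-y_iy_j$ for odd walks and $x_iy_j-x_jy_i$ for even walks. At $b=-a$ these become $a_ia_j-a_ia_j=0$ and $-a_ia_j+a_ja_i=0$, so they vanish as well. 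Hence the point lies in $\VV(\qq)$.

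The main obstacle is this last verification. It requires being careful about exactly which indices lie in $\qq(-)$: for a bipartite component, $x_i-y_i\notin\qq$, so nothing beyond the above is needed. The check must also hold uniformly, including degenerate components such as isolated vertices, where $\JJ_B=0$.
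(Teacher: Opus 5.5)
Your proof is correct and is essentially the paper's argument. Both reduce via the Nullstellensatz to points of $\VV(\pp^{+}(G))$, use that $x_i+y_i$, $x_ix_j-y_iy_j$ and $x_iy_j-x_jy_i$ vanish there, and then build a transversal of $\Delta_G$ that gives an element of $I_{\Delta_G}$ not vanishing at the point. The only difference is organisation: the paper argues by contradiction, multiplying one nonvanishing generator (of type $x_i$, $y_i$ or $x_i-y_i$) taken from each $\qq$, while you first isolate a $\qq$ using the single monomial $g_{C,D}$ and then check the generators of $\qq=\mm_S+\pp'$ directly, as in Proposition~\ref{prop: I+M rad}.
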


\begin{proof}
By Hilbert's Nullstellensatz, it suffices to prove the corresponding
inclusion of varieties:
\begin{equation*}
\VV(\II_G+I_{\Delta_G})
\subseteq
\overline{\VV(\II_G)\setminus \VV(\pp^{+}(G))}
=
\bigcup_{\qq\in\min(\II_G)\setminus\{\pp^{+}(G)\}}
\VV(\qq).
\end{equation*}
Let $\mathbf{v}=(p_{1},\ldots,p_{n},q_{1},\ldots,q_{n})\in \mathbb{K}^{2n}$ be a point in $\mathbb{V}(\II_{G}+I_{\Delta_{G}})$. If $\mathbf{v}\notin\VV(\pp^{+}(G))$, then $\mathbf{v}\in
\VV(\II_G)\setminus\VV(\pp^{+}(G))$,
and there is nothing to prove. We may therefore assume that
$\mathbf{v}\in\VV(\pp^{+}(G))$. Suppose, for contradiction, that
\[
\mathbf{v}\notin
\VV(\qq)
\qquad
\text{for all }
\qq\in\min(\II_G)\setminus\{\pp^{+}(G)\}.
\] 
By \Cref{thm:minimal primes}, every such prime is of the form
$\qq=\mm_S+\pp$, where $\pp$ is a minimal prime of $\JJ_{G_S}$.
Combining Propositions~\ref{prop: generators of JG}
and~\ref{prop: minimal primes of JG}, we see that $\qq$ admits a
generating set consisting of polynomials of the form
\begin{equation}\label{six polynomials}
x_i,\qquad
y_i,\qquad
x_i-y_i,\qquad
x_i+y_i,\qquad
x_ix_j-y_iy_j,\qquad
x_iy_j-x_jy_i.
\end{equation}
Since $\mathbf{v}\notin\VV(\qq)$, there exists a generator
$f_{\qq,\mathbf v}\in\qq$ that does not vanish at $\mathbf v$.
Observe that
\[
x_ix_j-y_iy_j=(x_i+y_i)x_j-y_i(x_j+y_j)
\]
and
\[
x_iy_j-x_jy_i=(x_i+y_i)y_j-y_i(x_j+y_j),
\]
hence both binomials belong to $\pp^{+}(G)$. The same is true for
$x_i+y_i$. Since $\mathbf v\in\VV(\pp^{+}(G))$, none of these
polynomials can serve as $f_{\qq,\mathbf v}$.

 Therefore, $f_{\qq,\mathbf v}$ must be of the form
$x_i$, $y_i$, or $x_i-y_i$ for some $i\in[n]$.
Denote such an index by $i_{\qq,\mathbf v}$.
By ~\Cref{def: q-}, we have
$i_{\qq,\mathbf v}\in\qq(-)$.

Consider now the polynomial
\[
f_{\mathbf v}
:=
\prod_{\qq\in\min(\II_G)\setminus\{\pp^{+}(G)\}}
f_{\qq,\mathbf v}.
\]
Since none of the factors vanish at $\mathbf v$, we have
$f_{\mathbf v}(\mathbf v)\neq 0$. On the other hand, every factor of $f_{\mathbf v}$ is of the form
\[
x_{i_{\qq,\mathbf v}},
\qquad
y_{i_{\qq,\mathbf v}},
\qquad\text{or}\qquad
x_{i_{\qq,\mathbf v}}-y_{i_{\qq,\mathbf v}}.
\]
Consequently, each monomial appearing in the expansion of
$f_{\mathbf v}$ is equal to $g_{C,D}$ for some partition
$\{C,D\}$ of the set
\[
T_{\mathbf v}
=
\{\,i_{\qq,\mathbf v}
:
\qq\in\min(\II_G)\setminus\{\pp^{+}(G)\}\,\}.
\]
Since $i_{\qq,\mathbf v}\in\qq(-)$ for every
$\qq\in\min(\II_G)\setminus\{\pp^{+}(G)\}$, the set $T_{\mathbf v}$
is a transversal of $\Delta_G$. Hence every monomial of
$f_{\mathbf v}$ belongs to $I_{\Delta_G}$, and therefore
$f_{\mathbf v}\in I_{\Delta_G}$. Because $\mathbf v\in\VV(I_{\Delta_G})$, it follows that $f_{\mathbf v}(\mathbf v)=0$, contradicting the fact that
$f_{\mathbf v}(\mathbf v)\neq 0$. Therefore,
$$\mathbf{v} \in \VV(\qq) \qquad \text{for some } \qq\in\min(\II_G)\setminus \{\pp^{+}(G)\}.$$
Hence, we obtain that $(\II_{G}:\pp^{+}(G))\subseteq \sqrt{\II_{G}+I_{\Delta_{G}}}.$
\end{proof}

We now show that $\II_{G}+I_{\Delta_{G}}$ is radical and compute its initial ideal.

\begin{proposition}\label{prop: radical and ini IG}
Let $G$ be a non-bipartite simple graph. Then the ideal $\II_{G}+I_{\Delta_{G}}$ is radical. Moreover, $\ini(\II_{G}+I_{\Delta_{G}})=\ini(\II_{G})+I_{\Delta_{G}}$ for any monomial order $\prec$.
\end{proposition}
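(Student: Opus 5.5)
The plan is to apply the three general propositions of \Cref{new framework} with $I=\II_G$ and $M=I_{\Delta_G}$. Everything reduces to one claim: \emph{every monomial contained in $\II_G+I_{\Delta_G}$ belongs to $I_{\Delta_G}$}. Once this holds, \Cref{prop: ini(I+M)} gives $\ini(\II_G+I_{\Delta_G})=\ini(\II_G)+I_{\Delta_G}$ for every monomial order, because $\II_G$ is binomial and $I_{\Delta_G}$ is monomial. \Cref{prop: I+M is radical} then gives radicality of $\II_G+I_{\Delta_G}$, once we check that both summands are radical. The ideal $\II_G$ is radical by \cite[Theorem~5.5]{kst16}, since $\operatorname{char}(\KK)\neq 2$. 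The ideal $I_{\Delta_G}$ is radical because its generators $g_{C,D}$ with $C\cap D=\emptyset$ are square-free monomials.

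To prove the claim, I would use \Cref{prop: sufficiento condition for monomials in I+M} with the monomial map
\[
\phi:\KK[\mathbf{x},\mathbf{y}]\to \KK[t_1,\ldots,t_n],\qquad \phi(x_i)=\phi(y_i)=t_i .
\]
Each generator maps to zero, since $\phi(x_ix_j-y_iy_j)=t_it_j-t_it_j=0$. Hence $\II_G\subseteq\ker(\phi)$.

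It remains to check the hypothesis of \Cref{prop: sufficiento condition for monomials in I+M}: if $u\in I_{\Delta_G}$ and $\phi(u)=\phi(v)$ for a monomial $v$, then $v\in I_{\Delta_G}$. For a monomial $w$, let $\sigma(w)\subseteq[n]$ be the set of indices $i$ such that $x_i$ or $y_i$ divides $w$. The condition $\phi(u)=\phi(v)$ means that, for each $i$, the total degree of $u$ in $\{x_i,y_i\}$ equals that of $v$; in particular $\sigma(u)=\sigma(v)$.

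Since $u\in I_{\Delta_G}$, some $g_{C,D}$ divides $u$ with $T:=C\cup D\in\TT(\Delta_G)$, and $T\subseteq\sigma(u)=\sigma(v)$. For each $i\in T$, choose $z_i\in\{x_i,y_i\}$ dividing $v$. Put $C'=\{i\in T: z_i=x_i\}$ and $D'=T\setminus C'$. Then $C'\cap D'=\emptyset$, $C'\cup D'=T$ is a transversal, and $g_{C',D'}=\prod_{i\in T}z_i$ divides $v$, because the $z_i$ are distinct variables each dividing $v$. Hence $v\in I_{\Delta_G}$.

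I do not expect a serious obstacle. The only point needing care is the choice of $\phi$: the transversal condition defining $I_{\Delta_G}$ only sees the index support $\sigma(w)$ and not which of $x_i,y_i$ occurs, and the map $x_i,y_i\mapsto t_i$ is chosen to match this. One should also note that \Cref{prop: ini(I+M)} uses a binomial Gröbner basis of $\II_G$, which exists by \cite[Proposition~1.1]{eisenbud1996binomial}.
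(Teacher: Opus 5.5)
Your proposal is correct and follows essentially the same route as the paper: you use the monomial map $x_i,y_i\mapsto t_i$ together with \Cref{prop: sufficiento condition for monomials in I+M}, and then \Cref{prop: I+M is radical} and \Cref{prop: ini(I+M)}. Your explicit check that $I_{\Delta_G}$ is closed under monomials with the same $\phi$-image, via the index support $\sigma$, fills in a step the paper only asserts. You also correctly attribute the binomial and radical hypothesis to $\II_G$, whereas the paper's text mistakenly attributes it to $I_{\Delta_G}$.
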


\begin{proof}
We first show that any monomial contained in $\II_{G}+I_{\Delta_{G}}$ belongs to $I_{\Delta_{G}}$. Consider the monomial map $\phi$ defined by
\[\phi:\mathbb{K}[x_{1},\ldots,x_{n},y_{1},\ldots,y_{n}]\rightarrow \mathbb{K}[z_{1},\ldots,z_{n}],\qquad x_{i},y_{i}\mapsto z_{i}.
\]
Note that $\phi(x_{i}x_{j}-y_{i}y_{j})=0$ for any $i,j\in [n]$, and hence $I\subseteq \ker(\phi)$. On the other hand, by the definition of $I_{\Delta_{G}}$, it follows that if $u$ and $v$ are monomials with $u\in I_{\Delta_G}$ and $\phi(u)= \phi(v)$, then $v\in I_{\Delta_G}$ as well. We can apply Proposition~\ref{prop: sufficiento condition for monomials in I+M}, which gives that any monomial contained in $\II_{G}+I_{\Delta_{G}}$ belongs to $I_{\Delta_{G}}$. Consequently, since $I_{\Delta_{G}}$ is binomial and radical, and $I_{\Delta_{G}}$ is monomial and radical, we can apply Propositions~\ref{prop: I+M is radical} and~\ref{prop: ini(I+M)}, which show that $\II_{G}+I_{\Delta_{G}}$ is radical and that $\ini(\II_{G}+I_{\Delta_{G}})=\ini(\II_{G})+I_{\Delta_{G}}$ for any monomial order $\prec$.
\end{proof}

We are now in position to prove the other inequality.

\begin{proposition}\label{prop: greater or equal}
Let $G$ be a non-bipartite simple graph. Then $\vpi\geq \nu_{G}$.
\end{proposition}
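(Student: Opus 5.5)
The plan is to mirror the argument of Theorem~\ref{thm: main theorem coordinate-saturated ideals}, with Propositions~\ref{prop:saturation and radical} and~\ref{prop: radical and ini IG} replacing Propositions~\ref{prop: I+M rad} and~\ref{prop: I+M is radical}.

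First I establish equality of colon ideals. The inclusion $I_{\Delta_G}\subseteq (\II_G:\pp^{+}(G))$ holds because each generator $g_{C,D}$ lies in every $\qq\in\min(\II_G)\setminus\{\pp^{+}(G)\}$. Indeed, $C\cup D$ meets $\qq(-)$ in some $i$. For that $i$, either $x_i,y_i\in\qq$ (when $i$ lies in the disconnector $S$), or $i$ lies in a component where $\qq$ restricts to $\pp^{-}$. I need to check the latter case. There, $x_i-y_i\in\qq$ alone does not put $x_i$ or $y_i$ in $\qq$.

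So I should instead argue via the definition of $\qq(-)$. The claim would fail as stated: $x_i-y_i\in\qq$ does not give $x_i\in\qq$. I therefore replace this step with a different route. I would redo the key inclusion so that only what is already proved is used.

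By Proposition~\ref{prop:saturation and radical} and Proposition~\ref{prop: radical and ini IG},
\[
(\II_G:\pp^{+}(G))\subseteq \sqrt{\II_G+I_{\Delta_G}}=\II_G+I_{\Delta_G}.
\]

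Now take $f$ homogeneous with $(\II_G:f)=\pp^{+}(G)$. Then $f\in(\II_G:\pp^{+}(G))\subseteq \II_G+I_{\Delta_G}$ and $f\notin\II_G$.

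Fix a monomial order $\prec$ and replace $f$ by its normal form modulo a Gröbner basis of $\II_G$. This does not change the class of $f$, its degree, or $(\II_G:f)$. After this reduction $f\neq 0$, and no monomial of $f$ lies in $\ini(\II_G)$.

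Since $f\in\II_G+I_{\Delta_G}$, its leading term satisfies
\[
\ini(f)\in\ini(\II_G+I_{\Delta_G})=\ini(\II_G)+I_{\Delta_G}.
\]
Because $\ini(f)\notin\ini(\II_G)$, we get $\ini(f)\in I_{\Delta_G}$. Hence $\ini(f)$ is divisible by some $g_{C,D}$ with $C\cup D\in\TT(\Delta_G)$ and $C\cap D=\varnothing$. Therefore
\[
\deg f=\deg\ini(f)\geq |C|+|D|=|C\cup D|\geq \nu_G,
\]
which proves $\vpi\geq\nu_G$.

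The main obstacle is checking that the normal-form reduction is legitimate and that $f\notin\II_G$ is preserved. Both hold because the colon ideal depends only on the class of $f$ modulo $\II_G$, and $f\notin\II_G$ since $(\II_G:f)\neq R$. Notably, this route does not need the reverse inclusion $I_{\Delta_G}\subseteq(\II_G:\pp^{+}(G))$, so the problematic first step can be dropped entirely. The remaining care point is homogeneity: $\vpi$ is attained by a homogeneous $f$, and the normal form of a homogeneous element with respect to a homogeneous Gröbner basis stays homogeneous of the same degree.
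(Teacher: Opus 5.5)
Your proposal is correct and is essentially the paper's own proof. You pass to the normal form of $f$ modulo $\II_G$, use $(\II_G:\pp^{+}(G))\subseteq\sqrt{\II_G+I_{\Delta_G}}=\II_G+I_{\Delta_G}$ together with $\ini(\II_G+I_{\Delta_G})=\ini(\II_G)+I_{\Delta_G}$ to force $\ini(f)\in I_{\Delta_G}$, and conclude from divisibility by some $g_{C,D}$ that $\deg f\geq\nu_G$.

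You are also right to drop the opening detour, since only this one inclusion is needed. The reverse inclusion $I_{\Delta_G}\subseteq(\II_G:\pp^{+}(G))$ in fact always fails here: $(\II_G:\pp^{+}(G))\subseteq\pp^{-}(G)$, and $\pp^{-}(G)$ contains no monomials.
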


\begin{proof}
Let $f$ be a homogeneous polynomial satisfying $(\II_{G}:f)=\+$. Then we must show that
\begin{equation*}\label{deg f}
\deg(f)\geq \nu_{G}.
\end{equation*}
Writing $f$ in normal form modulo $\II_{G}$, with respect to a monomial order $\prec$, we obtain a homogeneous decomposition $$f=g+h, \qquad g\in \II_{G},$$
where either $h=0$ or $\ini(h)\notin \ini(\II_{G})$. Since $f\notin \II_{G}$,
necessarily $h\neq 0$, and hence $\ini(h)\notin \ini(\II_{G}).$
Moreover, since $g\in \II_{G}$, it follows that $(\II_{G}:h)=(\II_{G}:f)=\+$ and since the decomposition is homogeneous, we also have $\deg(h)=\deg(f)$. By \Cref{prop:saturation and radical}, we have
\[h\in (\II_{G}:\+)\subseteq \sqrt{\II_{G}+I_{\Delta_{G}}}=\II_{G}+I_{\Delta_{G}}, \]
where the last equality follows from \Cref{prop: radical and ini IG}. We then have
\[\ini(h)\in \ini(\II_{G}+I_{\Delta_{G}})=\ini(\II_{G})+I_{\Delta_{G}},\]
where the last equality follows from Proposition~\ref{prop: radical and ini IG}. Using that $\ini(h)\notin \ini(\II_{G})$, it follows that $\ini(h)\in I_{\Delta_{G}}$. Hence, $\ini(h)$ is divisible by some generator of $I_{\Delta_{G}}$. Hence, there exist subsets $C,D\subset [n]$ with $C\cup D\in \TT(\Delta_{G})$ and $C\cap D=\emptyset$ such that $g_{C,D}\mid \ini(h).$
It follows that 
\[\deg(\ini(h))\geq \deg(g_{C,D})=\size{C\cup D}\geq \min \{\size{T}:T\in \TT(\Delta_{G})\}=\nu_{G}.\]
Note that $\deg(f)=\deg(h)=\deg(\ini(h)).$ Hence, we get that $\deg(f) \geq \nu_{G}$, as required.
\end{proof}

We conclude this section with its main result.

\begin{theorem}\label{thm: vpi equals nuG}
Let $G$ be a non-bipartite simple graph. The localized $\vn$-number $\vpi$ of $\II_{G}$ at the minimal prime $\+$ coincides with the combinatorial invariant $\nu_{G}$. In other words, $\vpi=\nu_{G}.$
\end{theorem}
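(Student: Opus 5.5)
The theorem follows by combining the two inequalities already established in this section, so the plan is essentially bookkeeping: verify that both hold under the standing hypotheses, then conclude equality.

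The upper bound $\vpi\leq \nu_{G}$ is Proposition~\ref{prop: less or equal}. There we take a minimum transversal $T$ of $\Delta_G$ and the form $f_T=\prod_{i\in T}(x_i-y_i)$. Each factor lies in some minimal prime $\qq\neq\+$ by the transversal property, and no factor lies in $\+$. Since $\II_G$ is radical, this gives $(\II_G:f_T)=\+$, with $\deg f_T=\nu_G$.

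The lower bound $\vpi\geq \nu_{G}$ is Proposition~\ref{prop: greater or equal}. Its argument runs as follows:
\begin{enumerate}
\item Any homogeneous $f$ with $(\II_G:f)=\+$ is reduced to its normal form $h$ modulo $\II_G$. Then $h\neq 0$, $\deg h=\deg f$, and $\ini(h)\notin\ini(\II_G)$.
\item By Proposition~\ref{prop:saturation and radical}, $h\in(\II_G:\+)\subseteq\sqrt{\II_G+I_{\Delta_G}}$.
\item By Proposition~\ref{prop: radical and ini IG}, this radical equals $\II_G+I_{\Delta_G}$, whose initial ideal is $\ini(\II_G)+I_{\Delta_G}$.
\item Hence $\ini(h)\in I_{\Delta_G}$, so some $g_{C,D}$ with $C\cup D$ a transversal divides $\ini(h)$. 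This gives $\deg f\geq |C\cup D|\geq\nu_G$.
\end{enumerate}

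The step needing the most care is Proposition~\ref{prop: radical and ini IG}. It invokes Propositions~\ref{prop: I+M is radical} and~\ref{prop: ini(I+M)}, which require $\II_G$ to be a radical binomial ideal and $I_{\Delta_G}$ to be a radical monomial ideal. Radicality of $I_{\Delta_G}$ holds because it is generated by squarefree monomials. Radicality of $\II_G$ holds by \cite[Theorem~5.5]{kst16}, using $\operatorname{char}\KK\neq 2$.

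The hypothesis that $\KK$ is algebraically closed enters only through the Nullstellensatz step in Proposition~\ref{prop:saturation and radical}. The hypothesis that $G$ is non-bipartite guarantees that $\+$ is a minimal prime. It also guarantees that every other minimal prime contains some $x_i-y_i$, so that $\nu_G$ is well defined.

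With these checks in place, the two inequalities combine to give $\vpi=\nu_G$.
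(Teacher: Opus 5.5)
Your proposal is correct and takes the same approach as the paper: the theorem is obtained by combining Proposition~\ref{prop: less or equal} ($\vpi\leq\nu_G$) with Proposition~\ref{prop: greater or equal} ($\vpi\geq\nu_G$). One small slip in your recap of the upper bound: the transversal property says that every minimal prime $\qq\neq\pp^{+}(G)$ contains some factor $x_i-y_i$ of $f_T$ (not that each factor lies in some such $\qq$), and it is this statement that gives $f_T\in\bigcap_{\qq\neq\pp^{+}(G)}\qq$.
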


\begin{proof}
This follows directly from Propositions~\ref{prop: less or equal} and~\ref{prop: greater or equal}, which give each of the inequalities.
\end{proof}

\section{Explicit formula for $\vpi$}\label{explicit formula for parity binomial edge ideals}

In Theorem~\ref{thm: vpi equals nuG}, we proved that $\vpi$ coincides with the combinatorial invariant $\nu_{G}$. However, $\nu_{G}$ is not completely explicit since it is defined as the minimum size of a transversal of $\Delta_{G}$. In this section, we will provide an explicit combinatorial description of this invariant. As in the previous section, we assume throughout that $G$ is connected and non-bipartite. Recall that for $S \subseteq [n]$, $G[S]$ denote the induced subgraph of $G$ on $S$. Note that $G[S]=G_{[n]\setminus S}$.

\begin{definition}\normalfont
A {\em dominant set} of $G$ is a subset $S\subseteq [n]$ such that every vertex in $[n]\setminus S$ is adjacent to some vertex in $S$. We say that a graph is {\em totally non-bipartite} if all its connected components are non-bipartite. We define
\[\mathcal{D}_{n}(G):=\{S\subseteq[n]: \text{$S$ is dominant and $G[S]$ is totally non-bipartite}\},\]
and $\gamma_{n}(G):=\min \{\size{S}: S\in \mathcal{D}_{n}(G)\}.$
\end{definition}

Our goal in what follows is to prove that $\nu_{G}=\gamma_{n}(G)$. We will do this by showing that $\TT(\Delta_{G})=\dng$. To that end, we need the following lemma. Recall the notion of $\mathcal{C}_{G_{S}}(s)$ from Notation~\ref{CGS}. For $s\notin S$, we  denote by $\mathcal{C}_{G[S]}(s)$ the set of those connected components of $G[S]$ which are joined when adding $s$.

\begin{lemma}\label{lem: totally non-bipartite}
Let $S\subseteq [n]$ be such that $G[S]$ is not totally-non bipartite. Then there exists a nonempty disconnector
$S'\subseteq [n]\setminus S$ such that, for every $s\in S'$,
the collection $\mathcal{C}_{G_{S'}}(s)$ contains at least one
bipartite graph.
\end{lemma}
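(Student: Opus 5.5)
Since $G[S]$ is not totally non-bipartite, I fix a bipartite connected component $B$ of $G[S]$. I then look for a disconnector $S'$ inside $[n]\setminus S$ that separates $B$ from the rest of the graph. The natural first candidate is $N:=N_G(V(B))\setminus V(B)$, the outer neighbourhood of $B$. It lies in $[n]\setminus S$, because $B$ is a connected component of $G[S]$. It is nonempty: $G$ is connected and non-bipartite, so $B\neq G$. In $G_N$ the set $V(B)$ is a whole connected component equal to $B$, which is bipartite. So every $s\in N$, when added back, joins $B$ to something, and $B\in\mathcal{C}_{G_{N}}(s)$. However, $N$ need not be a disconnector, because adding back some $s\in N$ may fail to decrease $\mathfrak{s}$. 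This happens, for instance, when $s$ is adjacent only to $B$, or when the join merely absorbs vertices without reducing $2c_0+c_1$.

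To repair this, I take $S'$ minimal, with respect to inclusion, among subsets $X\subseteq N$ such that $V(B)$ is a union of connected components of $G_X$ (equivalently $B$ itself is a component), and then I prune. Concretely, start with $X=N$. Whenever some $s\in X$ satisfies $\mathfrak{s}(G_X)\le \mathfrak{s}(G_{X\setminus\{s\}})$, remove $s$, i.e.\ replace $X$ by $X\setminus\{s\}$. Since the cardinality of $X$ decreases, this terminates, and the terminal set satisfies $\mathfrak{s}(G_X)>\mathfrak{s}(G_{X\setminus\{s\}})$ for all $s\in X$. So the terminal set is a disconnector, provided it is nonempty.

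The invariant I need to maintain, and the main obstacle, is that throughout the pruning each remaining $s\in X$ still has a bipartite component in $\mathcal{C}_{G_X}(s)$. My plan for this is to track a bipartite component $B_X\supseteq B$ of $G_X$. When a vertex $s$ is removed with no drop in $\mathfrak{s}$, the components joined by $s$ merge into one. If $B_X$ is among them, I argue the merged component is still bipartite. The reason is that merging a bipartite component with anything else, or turning it non-bipartite, strictly lowers $\mathfrak{s}$ (it changes $2\to 1$ or $2+\cdots$ to fewer). So a non-decreasing move involving $B_X$ can only be the trivial case: $s$ attaches solely to $B_X$ and the result stays bipartite. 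Hence $B_X$ stays a bipartite component, enlarged by $s$.

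Every remaining $s\in X$ is adjacent to the original $B$, hence to $B_X$, so $B_X\in\mathcal{C}_{G_X}(s)$. Finally, nonemptiness: if $X$ became empty, then $B_X=G$ would be bipartite, contradicting that $G$ is non-bipartite. Taking $S'$ as the terminal $X$ yields the lemma. The delicate step is the case analysis of how $\mathfrak{s}$ changes when a vertex is re-added; I expect to check it carefully, using that adding a vertex adjacent to components $C_1,\dots,C_k$ replaces their $\mathfrak{s}$-contribution with $1$ or $2$.
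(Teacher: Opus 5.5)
Your proof is correct, but it takes a different route from the paper's.

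\textbf{The paper's route.} The paper calls a vertex $j\notin S$ \emph{nice} in either of two cases. In the first, every component of $G[S]$ adjacent to $j$ is non-bipartite. In the second, $j$ is adjacent to exactly one component, which is bipartite and stays bipartite after adding $j$. The paper then \emph{grows} $S$ by repeatedly adjoining nice vertices, which never destroys the property of having a bipartite component. It takes $S'$ to be the complement of the final set. Both the disconnector inequality and the bipartite-neighbour property come from a closing case analysis of why each remaining vertex is not nice.

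\textbf{Your route.} You fix a single bipartite component $B$ and \emph{shrink} from its outer neighbourhood $N=N_G(V(B))\setminus V(B)$, discarding any vertex that violates the disconnector inequality. The disconnector property is then automatic at termination, and the real work is the invariant. Your computation settles it: the components adjacent to $s$ have total weight at least $2$ because $B_X$ is among them, while the merged component has weight at most $2$. This forces every discarded vertex to be adjacent only to $B_X$ and to keep it bipartite, which is exactly the paper's second kind of nice vertex. So, after jumping directly to the complement of $N$ rather than starting from $S$, your procedure is a localized version of the paper's that only ever makes this one kind of move.

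\textbf{What each buys.} Your $S'$ is more economical and explicit. It lies inside the neighbourhood of one bipartite component, and a single bipartite component $B_X$ belongs to $\mathcal{C}_{G_{S'}}(s)$ for every $s\in S'$ simultaneously. The paper's greedy version keeps all bipartite components of $G[S]$ in play, and its disconnector may have vertices bordering different bipartite components. Nonemptiness is argued the same way in both: an empty $S'$ would make a bipartite component equal to the connected non-bipartite graph $G$.

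\textbf{Cosmetic points.} Taking $S'$ ``minimal among subsets $X\subseteq N$ such that $V(B)$ is a union of components of $G_X$'' is vacuous. Every vertex of $N$ is adjacent to $B$, so $N$ is the only such $X$. Your argument is really just the pruning from $X=N$, after which $V(B)$ is generally no longer a component; that is precisely why tracking $B_X$ is needed. Also, in your motivating remark, a vertex adjacent only to $B$ violates the disconnector inequality only when $B\cup\{s\}$ remains bipartite.
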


\begin{proof}
Let $B_{1},\ldots,B_{c_{0}(G[S])}$ and $N_{1},\ldots,N_{c_{1}(G[S])}$ be the bipartite and non-bipartite connected components of $G[S]$ respectively.
We say that a vertex $j\notin S$ is {\em nice with respect to $S$} if at least one of the following holds:
\begin{enumerate}[label=(\roman*)]
\item\label{cond1} $\mathcal{C}_{G[S]}(j)=\{B_{i}\}$ for some $1\leq i\leq c_{0}(G[S])$ and $B_{i}\cup \{j\}$ is bipartite.
\item\label{cond2} $\mathcal{C}_{G[S]}(j)\subseteq \{N_{1},\ldots,N_{c_{1}(G[S])}\}$
\end{enumerate}
{\bf Claim~1.} If $G[S]$ is not totally non-bipartite and $j\notin S$ is nice with respect to $S$, then $G[S\cup \{j\}]$ is also not totally non-bipartite. 

\smallskip
By definition, $j$ satisfies either~\ref{cond1} or~\ref{cond2}. A straightforward case analysis shows that none of these conditions can destroy a bipartite connected component of $G[S]$. Consequently, $G[S\cup\{j\}]$ still contains a bipartite connected component and, in particular, is not totally non-bipartite. 

\medskip

Let $j_{1}\in [n]\setminus S$ be any vertex that is nice with respect to $S$. Since $G[S]$ is not totally non-bipartite and by Claim~1, we have that $G[S\cup \{j_{1}\}]$ is also not totally non-bipartite. Iterating this procedure, we can find a sequence of vertices $j_{1},\ldots,j_{k}$ such that $j_{r}$ is nice with respect to $S\cup \{j_{1},\ldots,j_{r-1}\}$ for all $r\in [k]$ and such that each $j\notin S\cup \{j_{1},\ldots,j_{k}\}$ is not nice with respect to $S$. We claim that $S':=[n]\setminus (S\cup \{j_{1},\ldots,j_{k}\})$ satisfies the desired properties.

By construction, we have $S'\subseteq [n]\setminus S$. Furthermore,
Claim~1 implies that $G_{S'}$ is not totally non-bipartite. In
particular, $S'$ must be nonempty, since $G=G_{\emptyset}$ is connected
and non-bipartite, and hence totally non-bipartite. We now show that $S'$ is a disconnector. For that, we must show that
\begin{equation}\label{inequality with s}2c_{0}(G_{S'})+c_{1}(G_{S'})>2c_{0}(G_{S'\setminus \{s\}})+c_{1}(G_{S'\setminus\{s\}}) \quad \text{for all $s\in S'$}.\end{equation}
By construction, we know that $s$ is not nice with respect to $[n]\setminus S'$, so both~\ref{cond1} and~\ref{cond2} do not hold. Hence, we must have that 
\begin{equation*}\label{nonempty}
\mathcal{C}_{G_{S'}}(s)\cap \{B_{1},\ldots,B_{c_{0}(G_{S'})}\}\neq \emptyset
\end{equation*}
and if $\mathcal{C}_{G[S]}(s)=\{B_{i}\}$ for some $1\leq i\leq c_{0}(G[S])$ then $B_{i}\cup \{s\}$ is non-bipartite. If $\size{\mathcal{C}_{G_{S'}}(s)}\geq 2$, then $s$ joins at least two of the connected components of $G_{S'}$ when being added, and hence at least one of $c_{0}$ or $c_{1}$ decreases, which proves~\Cref{inequality with s}. If $\size{\mathcal{C}_{G_{S'}}(s)}\leq 1$, then $\mathcal{C}_{G[S]}(s)=\{B_{i}\}$ for some $1\leq i\leq c_{0}(G[S])$ and $B_{i}\cup \{s\}$ is non-bipartite. In this case, we have $c_{0}(G_{S'\setminus \{s\}})<c_{0}(G_{S'})$, and thus~\Cref{inequality with s} also holds. It follows that $S'$ is a disconnector. 

To conclude, it follows from $\mathcal{C}_{G_{S'}}(s)\cap \{B_{1},\ldots,B_{c_{0}(G_{S'})}\}\neq \emptyset$ that, for every $s\in S'$,
the collection $\mathcal{C}_{G_{S'}}(s)$ contains a bipartite graph.
Therefore, $S'$ satisfies all the required properties, and the proof is
complete.
\end{proof}

\begin{corollary}\label{cor: not a transversal}
Let $S\subseteq [n]$ be such that $G[S]$ is not totally-non bipartite. Then, $S\notin \TT(\Delta_{G})$.
\end{corollary}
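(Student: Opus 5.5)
The plan is to apply Lemma~\ref{lem: totally non-bipartite} to $S$ and turn the disconnector it produces into a minimal prime $\qq\neq\pp^{+}(G)$ with $\qq(-)\cap S=\emptyset$. Once we have such a $\qq$, the edge $\qq(-)$ of $\Delta_G$ misses $S$, so $S\notin\TT(\Delta_G)$.

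Concretely, Lemma~\ref{lem: totally non-bipartite} gives a nonempty disconnector $S'\subseteq[n]\setminus S$ such that, for every $s\in S'$, the collection $\mathcal{C}_{G_{S'}}(s)$ contains a bipartite component. We take the minimal prime of $\JJ_{G_{S'}}$ given by Proposition~\ref{prop: minimal primes of JG}:
\[
\pp':=\sum_{B}\JJ_{B}+\sum_{N}\pp^{+}(N),
\]
where $B$ runs over the bipartite components of $G_{S'}$ and $N$ over the non-bipartite ones. That is, we choose the sign $+$ on every non-bipartite component.

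The sign-split condition in Definition~\ref{def: sign split} only constrains vertices $s\in S'$ for which $\mathcal{C}_{G_{S'}}(s)$ contains no bipartite graph. By the choice of $S'$ there are no such $s$, so $\pp'$ is vacuously sign-split. Theorem~\ref{thm:minimal primes} then shows that $\qq:=\mm_{S'}+\pp'$ is a minimal prime of $\II_G$. It differs from $\pp^{+}(G)$ because $S'\neq\emptyset$, so $\qq$ contains $x_s$ for some $s$, while $\pp^{+}(G)$ contains no variable.

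The main step, and the only one needing care, is computing $\qq(-)$. I want to show $\qq(-)\subseteq S'$ by checking that $x_i-y_i\notin\qq$ for every $i\notin S'$. The idea is to exhibit a point in $\VV(\qq)$ where $x_i-y_i$ does not vanish. Set $x_s=y_s=0$ for $s\in S'$, and take $x_j=1$, $y_j=-1$ on every non-bipartite component, which is a point of $\VV(\pp^{+}(N))$.

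On a bipartite component $B$ we need a point of $\VV(\JJ_B)$ with $x_i\neq y_i$. For this I would use the generators from Proposition~\ref{prop: generators of JG}. Let $B=U\sqcup W$ be the bipartition and put $(x_j,y_j)=(1,t)$ for $j\in U$ and $(x_j,y_j)=(t,1)$ for $j\in W$.
\begin{itemize}
\item If $i,j$ lie in the same part, every $(i,j)$-walk is even and $x_iy_j-x_jy_i=0$.
\item If $i,j$ lie in different parts, every $(i,j)$-walk is odd and $x_ix_j-y_iy_j=t-t=0$.
\end{itemize}
Since $B$ is bipartite, only these two generator types occur, so the point lies in $\VV(\JJ_B)$. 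Choosing $t\neq1$ gives $x_i\neq y_i$ on $B$.

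The resulting point lies in $\VV(\qq)$, because $\qq$ is the sum of ideals in disjoint sets of variables. At every $i\notin S'$ it satisfies $x_i-y_i\neq0$, using $\operatorname{char}\KK\neq2$ on the non-bipartite components. Hence $\qq(-)\subseteq S'\subseteq[n]\setminus S$, so $\qq(-)\cap S=\emptyset$, and $S$ is not a transversal of $\Delta_G$.
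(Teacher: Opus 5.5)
Your proposal is correct and follows the paper's proof. You apply the preceding lemma to get $S'$, take the minimal prime of $\JJ_{G_{S'}}$ with sign $+$ on every non-bipartite component, note that it is vacuously sign-split, and use the edge $\qq(-)$ of $\Delta_G$, which is disjoint from $S$. Your explicit point in $\VV(\qq)$ and your check that $\qq\neq\pp^{+}(G)$ spell out what the paper states in one line, namely that $\qq(-)=S'$ ``by the description of $\pp$''.
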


\begin{proof}
By Lemma~\ref{lem: totally non-bipartite}, there exists a nonempty disconnector
$S'\subseteq [n]\setminus S$ such that, for every $s\in S'$,
the collection $\mathcal{C}_{G_{S'}}(s)$ contains at least one
bipartite graph.
Let $B_{1},\ldots,B_{c_{0}(G_{S'})}$ and $N_{1},\ldots,N_{c_{1}(G_{S'})}$ be the bipartite and non-bipartite connected components of $G_{S'}$ respectively. By Proposition~\ref{prop: minimal primes of JG}, the ideal
\[\pp:=\sum_{i}^{c_{0}(G_{S'})}\JJ_{B_{i}}+\sum_{i=1}^{c_{1}(G_{S'})}\pp^{+}(N_{i})\]
is a minimal prime of $\JJ_{G_{S'}}$. Moreover, since for every $s\in S'$,
the collection $\mathcal{C}_{G_{S'}}(s)$ contains at least one
bipartite graph, it follows that $\pp$ is sign-split, according to Definition~\ref{def: sign split}. Hence, by Theorem~\ref{thm:minimal primes}, it follows that $\qq:=\mm_{S'}+\pp$ is a minimal prime of $\II_{G}$. Moreover, by the description of $\pp$, it follows that $\qq(-)=S'$. By definition of $\Delta_{G}$, we have $S'=\qq(-)\in \Delta_{G}$, and since $S'\subseteq [n]\setminus S$, the claim follows.
\end{proof}

\begin{proposition}\label{prop: combinatorial equality}
Let $G$ be a non-bipartite simple graph. Then $\TT(\Delta_{G})=\dng.$
\end{proposition}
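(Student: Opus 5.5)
We prove the two inclusions $\TT(\Delta_G)\subseteq\dng$ and $\dng\subseteq\TT(\Delta_G)$ separately.

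\emph{First inclusion.} Let $T\in\TT(\Delta_G)$. By Corollary~\ref{cor: not a transversal}, $G[T]$ is totally non-bipartite, so it remains to show that $T$ is dominant. Suppose some vertex $j\notin T$ has no neighbour in $T$. We want a minimal prime $\qq\neq\pp^{+}(G)$ with $\qq(-)\cap T=\emptyset$, contradicting $T\in\TT(\Delta_G)$. The natural candidate uses $S'=\{j\}$ if $j$ is a disconnector, but this need not hold. Instead, we apply the argument of Lemma~\ref{lem: totally non-bipartite} with $S$ replaced by $T\cup\{j\}$, where $j$ is now an isolated vertex. Then $G[T\cup\{j\}]$ has the bipartite component $\{j\}$, so it is not totally non-bipartite. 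Lemma~\ref{lem: totally non-bipartite} then yields a nonempty disconnector $S'\subseteq[n]\setminus(T\cup\{j\})$ such that every $\mathcal{C}_{G_{S'}}(s)$ contains a bipartite component. As in Corollary~\ref{cor: not a transversal}, we form the sign-split prime $\qq=\mm_{S'}+\pp$, with $\pp^{+}$ on the non-bipartite components. We check that $\qq\neq\pp^{+}(G)$, since $S'\neq\emptyset$, and that $\qq(-)=S'$, which is disjoint from $T$. This contradicts $T\in\TT(\Delta_G)$. In fact this step is essentially Corollary~\ref{cor: not a transversal} applied to the set $T\cup\{j\}$. Since every superset of a transversal is a transversal, $T\cup\{j\}$ is again a transversal of $\Delta_G$, so Corollary~\ref{cor: not a transversal} directly forces $G[T\cup\{j\}]$ to be totally non-bipartite. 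This is impossible, because $j$ is isolated in $G[T\cup\{j\}]$. This settles dominance cleanly.

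\emph{Second inclusion.} Let $T\in\dng$ and let $\qq=\mm_S+\pp\in\min(\II_G)\setminus\{\pp^{+}(G)\}$. We must show that $\qq(-)\cap T\neq\emptyset$. First we identify $\qq(-)$. It contains $S$, together with the vertices of bipartite components of $G_S$ and of non-bipartite components carrying the sign $-$; here we use that $\JJ_B$ for bipartite $B$ contains no linear forms. If $S\cap T\neq\emptyset$ we are done, so assume $S\cap T=\emptyset$. Then $G[T]\subseteq G_S$, and every component of $G_S$ meets $T$ or is dominated by it. We now split into cases.

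\emph{Case $S=\emptyset$.} Then $G$ is non-bipartite, so $\qq=\pp^{-}(G)$, and $\qq(-)=[n]$ meets $T$.

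\emph{Case $S\neq\emptyset$.} Take $s\in S$. Since $T$ is dominant and $s\notin T$, $s$ has a neighbour $t\in T$, lying in some component $C$ of $G_S$. If $C$ is bipartite or has sign $-$, then $t\in\qq(-)$. Otherwise every component of $G_S$ containing vertices of $T$ has sign $+$, and each such component is non-bipartite. We claim this contradicts sign-splitness. Indeed, every $s\in S$ is adjacent to $T$, so the components of $\mathcal{C}_{G_S}(s)$ include a $T$-component. The difficulty is that $\mathcal{C}_{G_S}(s)$ may also contain components disjoint from $T$. These are not touched by $T$ at all, since no vertex of $T$ lies in them and $T$ dominates them only via vertices outside $G_S$. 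But then each of their vertices is adjacent to $T$, and $T\cap V(G_S)$ lies in $+$ components; so a component disjoint from $T$ has all its vertices adjacent to $T$-vertices. Adjacency within $G_S$ would merge components, so these $T$-neighbours lie in $S$, which is impossible since $S\cap T=\emptyset$. Hence every component of $G_S$ meets $T$, so all components are non-bipartite with sign $+$. Sign-splitness then fails at any $s\in S$; note that $\mathcal{C}_{G_S}(s)$ is nonempty and bipartite-free. This gives a contradiction.

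The main obstacle is this last case analysis: pinning down $\qq(-)$ exactly, including the claim that $\JJ_B$ contributes no $x_i-y_i$, and verifying the sign-split contradiction.
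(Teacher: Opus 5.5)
Your first inclusion is correct and, once the preliminary detour is dropped, it is exactly the paper's argument: $T\cup\{j\}$ is still a transversal, yet $\{j\}$ is a bipartite component of $G[T\cup\{j\}]$, contradicting Corollary~\ref{cor: not a transversal}. The second inclusion, however, has a genuine gap. Your description of $\qq(-)$ is wrong. For $\qq=\mm_S+\sum_k\JJ_{B_k}+\sum_k\pp^{\sigma_k}(N_k)$ one has
\[
\qq(-)=S\cup\bigcup\{V(N_k): \sigma_k=-\}.
\]
Vertices of bipartite components of $G_S$ are \emph{not} in $\qq(-)$. This follows from the very fact you cite: $\JJ_B$ is generated by quadrics, and the summands of $\qq$ involve disjoint sets of variables. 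Your own first inclusion relies on this, since there $\qq(-)=S'$ even though $G_{S'}$ has bipartite components. So the step ``if $C$ is bipartite \dots\ then $t\in\qq(-)$'' is false. It is also the only place where you obtain that the components of $G_S$ meeting $T$ are non-bipartite.

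As a result, your argument never uses that $G[T]$ is totally non-bipartite, and it would prove that every dominating set is a transversal, which is false. Take the triangle $abc$ with the pendant path $c-d-e$. Then $T=\{a,d\}$ is dominating, and $S=\{c\}$ is a disconnector. Sign-splitness is vacuous here, so $\qq=\mm_{\{c\}}+\JJ_{\{a,b\}}+\JJ_{\{d,e\}}$ is a minimal prime, and $\qq(-)=\{c\}$ misses $T$.

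The missing idea is the paper's Claim. Since $T\cap S=\emptyset$, a component $C$ of $G_S$ containing $t\in T$ contains the whole component of $G[T]$ through $t$. That component is non-bipartite, so $C$ is non-bipartite. You did prove that every component of $G_S$ meets $T$; more directly, a $T$-neighbour of $v\in C\setminus T$ lies outside $S$, hence in $C$. Together these show that all components of $G_S$ are non-bipartite. Then for any $s\in S$, $\mathcal{C}_{G_S}(s)$ is nonempty (it contains the component of a $T$-neighbour of $s$) and bipartite-free. Sign-splitness gives some $H\in\mathcal{C}_{G_S}(s)$ with summand $\pp^{-}(H)$, and $V(H)\subseteq\qq(-)$ meets $T$. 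Your separate treatment of $S=\emptyset$, where $\qq=\pp^{-}(G)$, is a correct detail that the paper's proof passes over when it picks $s\in S$.
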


\begin{proof}
We first show that $\TT(\Delta_{G}) \subseteq \dng.$ Let $T\in \TT(\Delta_{G})$. We must show that $T$ is a dominating set and that $G[T]$ is totally non-bipartite. By Corollary~\ref{cor: not a transversal}, it follows that $G[T]$ is totally non-bipartite. Assume, for contradiction, that $T$ is not a dominating set. Then, there exists some vertex $j\notin N_{G}(T)$. Hence, $\{j\}$ is a connected component of $G[T\cup \{j\}]$, which implies that $T\cup \{j\}$ is not totally non-bipartite. Again by Corollary~\ref{cor: not a transversal}, this implies that $T\cup \{j\}\notin \TT(\Delta_{G})$, which implies that $T\notin \TT(\Delta_{G})$, a contradiction. This proves the inclusion.

We now show the inclusion $\TT(\Delta_{G}) \supseteq \dng.$ Let $T\in \dng$. Assume, for contradiction, that $T\notin \TT(\Delta_{G})$. Then, there exists $\qq\in \min(\II_{G})\setminus \+$ such that $T\cap \qq(-)=\emptyset$. By Theorem~\ref{thm:minimal primes}, $\qq$ must be of the form $\mm_{S}+\pp$, where $S\subseteq [n]$ is a disconnector and $\pp$ is a sign-split minimal prime of $\JJ_{G_{S}}$. Since $S\subseteq \qq(-)$, it follows that $T\subseteq [n]\setminus S$. 

\smallskip
{\bf Claim.} Every connected component of $G_{S}$ intersects $T$ and is non-bipartite.

\smallskip
Since $T\subseteq [n]\setminus S$, and $T$ is dominant, it follows that every connected component of $G_{S}$ intersects $T$. Let $G_{1}$ be any of these components. By the already proven, it must contain some vertex $t\in T$, and hence the whole connected component of $G[T]$ containing $t$ is also a subset of the vertices of $G_{1}$. Since all the connected components of $G[T]$ are non-bipartite, it follows that $G_{1}$ is non-bipartite as well.

\medskip
Now take any $s\in S$. By the claim, all connected components of $G_{S}$ are non-bipartite, in particular those included in $\mathcal{C}_{G_{S}}(s)$. Thus, since $\pp$ is sign-split, the prime summands of $\pp$ corresponding to the connected components in $\mathcal{C}_{G_{S}}(s)$ are not all equal to $\pp^{+}$. Hence, there exists at least one connected component of $G_{S}$, say $H$, such that the prime summand of $\pp$ corresponding to $H$ is not $\pp^{+}$. Therefore, it follows that $V(H)\subseteq \qq(-)$, where $V(H)$ denotes the vertex set of $H$. By the claim we know that $V(H)\cap T\neq \emptyset$ and hence $T\cap \qq(-)\neq \emptyset$, which contradicts our assumption that $T\cap \qq(-)=\emptyset$. This completes the proof. 
\end{proof}

We are now in position to provide an explicit formula for $\vpi$.

\begin{theorem}\label{thm: vpi equals yng}
Let $G$ be a non-bipartite simple graph. Then 
\[\vpi=\gamma_{n}(G)=\min \{\size{S}: \text{$S$ is a dominating set and $G[S]$ is totally non-bipartite}\}.\]
\end{theorem}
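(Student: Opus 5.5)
This statement is essentially a combination of two results already in hand, so the plan is a short chain of equalities. By Theorem~\ref{thm: vpi equals nuG} we have $\vpi=\nu_{G}$. By Definition~\ref{def: q-}, $\nu_{G}$ is the minimum cardinality of a transversal of $\Delta_{G}$, that is,
\[
\nu_{G}=\min\{\size{T}: T\in \TT(\Delta_{G})\}.
\]
By Proposition~\ref{prop: combinatorial equality}, the two families $\TT(\Delta_{G})$ and $\dng$ coincide as collections of subsets of $[n]$. Minimizing cardinality over the same family gives the same value, so
\[
\nu_{G}=\min\{\size{S}: S\in \dng\}=\gamma_{n}(G).
\]
Chaining these yields $\vpi=\gamma_{n}(G)$. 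The right-hand description in the statement is then just the definition of $\dng$: $S$ dominating and $G[S]$ totally non-bipartite.

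Before concluding, I will check that both minima are taken over nonempty families. The set $[n]$ itself is dominating, and $G[[n]]=G$ is connected and non-bipartite, hence totally non-bipartite. So $[n]\in\dng$ and $\gamma_{n}(G)$ is well defined. Equivalently, $[n]$ meets every edge $\qq(-)$ of $\Delta_{G}$, since each such edge is nonempty, as noted after Definition~\ref{def: q-}.

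I will also confirm that the standing hypotheses needed for Theorem~\ref{thm: vpi equals nuG} are in force. These are that $G$ is connected and non-bipartite, and that $\KK$ is algebraically closed with $\operatorname{char}(\KK)\neq 2$; both are fixed throughout this section.

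There is no real obstacle here, because all the substantive work was done earlier. The lower bound comes from the radicality and initial-ideal computation in Propositions~\ref{prop:saturation and radical} and~\ref{prop: radical and ini IG}. The combinatorial identification of transversals comes from Lemma~\ref{lem: totally non-bipartite}. The only point requiring care is matching the definitions exactly: the paper uses "dominant set" and "dominating set" synonymously.
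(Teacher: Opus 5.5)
Your proposal is correct and is exactly the paper's argument: Proposition~\ref{prop: combinatorial equality} gives $\TT(\Delta_{G})=\dng$, so $\nu_{G}=\gamma_{n}(G)$, and Theorem~\ref{thm: vpi equals nuG} gives $\vpi=\nu_{G}$. The paper leaves implicit your extra checks, namely that $[n]\in\dng$ makes the minimum well defined and that the standing hypotheses on $G$ and $\KK$ are in force; including them is harmless.
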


\begin{proof}
By \Cref{prop: combinatorial equality}, we have that $\TT(\Delta_{G})=\dng$ and hence
\[\nu_{G}=\min\{\size{T}:T\in \TT(\Delta_{G})\}=\min\{\size{T}:T\in \dng\}=\gamma_{n}(G).\]
The result then follows from \Cref{thm: vpi equals nuG}, which gives $\vpi=\nu_{G}$.
\end{proof}

The following corollary is immediate.
\begin{corollary}\label{cor: vpi inequals yng}
Let $G$ be a non-bipartite simple graph. Then $\vn(\II_G) \leq \gamma_{n}(G).$
\end{corollary}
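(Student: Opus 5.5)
The statement is an immediate consequence of \Cref{thm: vpi equals yng}, together with the definition of the $\vn$-number as a minimum of local $\vn$-numbers. The plan is as follows.

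First, I would recall that for any graded ideal $I$,
\[
\vn(I)=\min\{\vn_{\qq}(I):\qq\in \Ass(I)\}.
\]
This holds because both sides minimize $\deg(f)$ over homogeneous $f$ with $(I:f)$ an associated prime. In particular, for every $\qq\in\Ass(I)$ we have $\vn(I)\leq \vn_{\qq}(I)$.

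Next, I would check that $\pp^{+}(G)\in\Ass(\II_G)$. Since $G$ is non-bipartite, the empty set is a disconnector, so $\pp^{+}(G)$ is a minimal prime of $\II_G$ by \Cref{thm:minimal primes}, as already observed in the text before \Cref{question}. Since $\II_G$ is radical, its associated primes are exactly its minimal primes, so $\pp^{+}(G)$ is associated. Hence
\[
\vn(\II_G)\leq \vpi .
\]

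Finally, \Cref{thm: vpi equals yng} gives $\vpi=\gamma_n(G)$, which yields $\vn(\II_G)\leq\gamma_n(G)$. I expect no real obstacle here. The only point to state explicitly is that the hypotheses of \Cref{thm: vpi equals yng} are those in force throughout the section: $G$ connected and non-bipartite, $\KK$ algebraically closed, and $\operatorname{char}(\KK)\neq 2$. Under these hypotheses both radicality and membership of $\pp^{+}(G)$ in $\min(\II_G)$ are available.
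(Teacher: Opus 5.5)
Your proposal is correct and matches the paper's argument. The paper calls the corollary immediate: $\vn(\II_G)\leq \vpi$ because $\pp^{+}(G)$ is a minimal, hence associated, prime of $\II_G$, and \Cref{thm: vpi equals yng} gives $\vpi=\gamma_n(G)$ under the section's standing hypotheses ($G$ connected, $\KK$ algebraically closed, $\operatorname{char}(\KK)\neq 2$), which you rightly make explicit.
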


\section{v-number and regularity of parity binomial edge ideals}\label{v-number versus regularity for parity binomial edge ideals}

In this section, we study the following question.

\begin{question}\label{question 2}
For which graphs $G$ does it hold that $\vn(\II_{G})\leq \textup{reg}(R/\II_{G})?$
\end{question}

We will focus on this question for a specific family of graphs, which we now introduce. We first recall the notion of clique-sum of graphs.

\begin{definition}\normalfont
Let $G_1$ and $G_2$ be two subgraphs of a graph $G$. Suppose that $G_1 \cap G_2 = K_m,$ where $K_m$ is the complete graph on $m$ vertices, and $G_1 \neq K_m$ and $G_2 \neq K_m$. Then $G$ is called the \emph{$m$-clique sum} of $G_1$ and $G_2$ along the complete graph $K_m$, and is denoted by $G = G_1 \cup_{K_m} G_2.$ If $m=1$, then the clique sum of $G_1$ and $G_2$ along a vertex is denoted by $G = G_1 \cup G_2.$ 
\end{definition}

\begin{definition}\normalfont\label{def: decorated tree}
A \emph{decorated tree} is a graph $G$ obtained from a tree $T$ by choosing a subset
$S\subseteq V(T)$ of pairwise non-adjacent vertices and, for each $v\in S$,
attaching an odd cycle $C_v$ to $T$ via a $1$-clique sum, that is, $G = T \cup C_v,$ where a vertex of $C_v$ is identified with the vertex $v$ of $T$.



We assume that the cycles $\{C_v : v\in S\}$ are pairwise disjoint outside of
their attachment vertices or attachment edges in $T$. The tree $T$ is called the \emph{underlying tree} of $G$, and the vertices in
$S$ are called the \emph{decorated vertices}.
\end{definition}
Observe that the classes of graphs appearing in \cite[Theorems 4.4 and 4.5]{jks20} and \cite[Theorem 4.1]{nk24} are all decorated trees. We now aim to show that $\vn(\II_G)\leq \reg(R/\II_G)$
whenever \(G\) is a decorated tree. To that end, we introduce some notation.

\begin{notation}\normalfont
Let $G$ be a decorated tree and let $T$ be the underlying tree. Fix a vertex $r\in V(T)$ and view $T$ as a rooted tree. This induces a partial order
\begin{equation}\label{weak order}u\leq_{r}v \quad \text{if $u$ lies in the unique path from $v$ to $r$}.\end{equation}
We say that a decorated vertex $v\in V(T)$ is {\em maximal} if there does not exist another decorated vertex $v'\in V(T)$ such that $v<_{r}v'$. For a maximal decorated vertex $v$, we define
\begin{equation}\label{C tilde}\widetilde{C}_{v}:=C_{v}\cup \{u: u\geq_{r}v\}\subset V(G).\end{equation}
Moreover, for every vertex $w \in V(T)\setminus\{r\}$, we define its
\emph{parent} as the unique neighbour of $w$ lying on the path from $w$
to the root $r$.
\end{notation}





We denote by $\iv(G)$ the set of internal vertices of $G$, i.e. the set
of vertices that are not leaves. For an edge $e\in E(G)$, we denote by $G\setminus e$ the graph on the vertex set $V(G)$ with edge set $E(G)\setminus \{e\}$.

\begin{lemma}\label{lem: unique decorated vertex}
Let $G$ be a decorated tree with a unique decorated vertex. Then
\[\reg(R/\II_{G})\geq \size{\iv(G)}.\]
\end{lemma}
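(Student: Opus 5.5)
I plan to argue by induction on the number of vertices of the underlying tree $T$, using the standard regularity lower bound for induced subgraphs. For parity binomial edge ideals, if $H$ is an induced subgraph of $G$ then $\reg(R/\II_H)\leq \reg(R/\II_G)$. This holds because $\II_H$ is obtained from $\II_G$ by restriction (setting the variables of the deleted vertices to zero and using the $\ZZ^n$-multigrading $\deg x_i=\deg y_i=e_i$). The graded Betti numbers of $R_H/\II_H$ then appear among those of $R/\II_G$, exactly as in the binomial edge ideal setting. I will take this as a known fact.

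The \textbf{base case} is $G$ being just the odd cycle $C_v$ with $T=\{v\}$, or $C_v$ with a few pendant leaves at $v$. Here $\iv(G)$ consists of the cycle vertices together with $v$. I would check this case directly from the known regularity of $\II_{C_{2k+1}}$ and of the odd cycle with whiskers, which is the content of \cite[Theorems 4.4 and 4.5]{jks20} and \cite[Theorem 4.1]{nk24}. Those results give regularity equal to the number of internal vertices, or at least that many, for these small decorated graphs.

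For the \textbf{inductive step}, let $v$ be the unique decorated vertex. I plan to choose a leaf $\ell$ of $T$ far from $v$, with parent $w$. If $w$ is not an internal vertex of $G\setminus \ell$, i.e.\ $w$ becomes a leaf after deleting $\ell$, then $\iv(G)=\iv(G\setminus \ell)\cup\{w\}$. Otherwise $\size{\iv(G)}=\size{\iv(G\setminus\ell)}$, and the inductive hypothesis together with the induced-subgraph inequality finishes the step. The critical case is when the path from $v$ ends in a pendant path. Here I would use a short exact sequence built from the edge $e=\{w,\ell\}$, namely
\[0\to R/(\II_{G\setminus e}:f_e)(-2)\to R/\II_{G\setminus e}\to R/\II_G\to 0,\]
where $f_e=x_wx_\ell-y_wy_\ell$. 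Alternatively, I could use the splitting at a cut vertex. The goal is to show that $\reg$ increases by at least one when a leaf is attached to a vertex that was a leaf.

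As an alternative route for this step, I would use the tensor-type behavior under decomposable gluing. If $G=G_1\cup G_2$ is glued at a vertex $w$ that is free (a leaf) in both pieces, then $\reg(R/\II_G)=\reg(R/\II_{G_1})+\reg(R/\II_{G_2})$. This is plausible for parity ideals, by analogy with binomial edge ideals, whenever one side is bipartite, since $\II$ of a tree is isomorphic to a binomial edge ideal. Splitting $G$ at $v$ into the cycle part and the tree components, I then add the lower bounds: $\size{\iv}$ for tree pieces by \cite[Theorem 4.1]{JNR}, and the cycle estimate from the base case. I expect the main obstacle to be justifying this additivity, or the exact-sequence estimate, for parity binomial edge ideals when the gluing vertex lies on the odd cycle, where the pieces are not bipartite.
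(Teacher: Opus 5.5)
\textbf{There is a genuine gap: the step where regularity must increase by one is never proved.} Your reduction is otherwise sound. Removing a leaf $\ell$ of $T$ whose parent $w$ is not $v$ keeps $G\setminus\ell$ a decorated tree with the single decorated vertex $v$. When $\deg_G(w)\geq 3$, the induced-subgraph inequality and induction suffice. But the case $\deg_G(w)=2$ is where all the content lies. There you need $\reg(R/\II_G)\geq\reg(R/\II_{G\setminus\ell})+1$, and you only state this as a goal and list it as the expected obstacle.

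The exact sequence for $e=\{w,\ell\}$ gives no lower bound until the colon $\II_{G\setminus e}:f_e$ is identified. You also cannot borrow \cite[Lemma~3.4]{ar4} for this: it requires $G\setminus e$ to be bipartite, which fails here because $C_v$ survives.

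The missing idea is that $f_e$ is a nonzerodivisor.
\begin{itemize}
\item Since $\ell$ is isolated in $G\setminus e$, we have $R/\II_{G\setminus e}=A[x_\ell,y_\ell]$, where $A$ is the quotient by $\II_{G\setminus\ell}$.
\item The form $f_e=x_wx_\ell-y_wy_\ell$ is linear in $x_\ell,y_\ell$ with coefficients $x_w,-y_w$. By McCoy's theorem it is a zero divisor only if $(x_w,y_w)$ lies in an associated prime of $A$.
\item By radicality and \cite[Theorem~4.15]{kst16}, such a prime has the form $\mm_S+\pp$ with $S$ a disconnector of $G\setminus\ell$, and it contains $x_w,y_w$ only if $w\in S$.
\item A leaf is never in a disconnector: re-adding it to $(G\setminus\ell)_S$ either leaves $\mathfrak{s}$ unchanged or raises it by $2$.
\end{itemize}
Hence the colon is trivial and $\reg(R/\II_G)=\reg(R/\II_{G\setminus\ell})+1$, which closes the step.

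Your alternative gluing route does not rescue the argument. The vertex $v$ is not a leaf of $C_v$, so the additivity you conjecture would not apply at $v$ even if it were proved. Your base case also rests on loosely described citations. All you actually need there is $\reg(R/\II_{C_{2k+1}})\geq 2k+1$: pendant leaves at $v$ do not change $\iv(G)$, so the induced-subgraph inequality then covers the star case.

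For comparison, the paper needs no induction, because it applies the regular-element argument once, on the cycle rather than the tree. It deletes an edge $e=\{r,s\}$ of $C_v$ not containing $v$, which turns $r,s$ into leaves of the tree $G\setminus e$. Then \cite[Lemma~3.4]{ar4} gives $\II_{G\setminus e}:g_e=\II_{G\setminus e}$, so $\reg(R/\II_G)=\reg(R/J_{G\setminus e})+1$. Finally \cite[Theorem~4.1]{JNR} gives $\reg(R/J_{G\setminus e})\geq\size{\iv(G\setminus e)}+1=\size{\iv(G)}-1$.

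Your odd-cycle base case is exactly this argument with $T=\{v\}$. Once that argument is available, it handles every $T$ at once, because the tree bound absorbs all of $T$. With the fix above, your route becomes a valid alternative that does not use \cite{JNR} on the tree side. The cost is an induction, and it still needs the cycle computation.
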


\begin{proof}
Let $v$ be the unique decorated vertex of $G$ and let $e=\{r,s\}$ be any edge of the odd cycle $C_{v}$ not containing $v$. Consider the short exact sequence
\begin{align}\label{exact-sequence}
0 \longrightarrow
\frac{R}{\II_{G\setminus e}:g_e}(-2)
\xrightarrow{\cdot g_e}
\frac{R}{\II_{G\setminus e}}
\longrightarrow
\frac{R}{\II_G}
\longrightarrow 0,
\end{align}
where $g_{e}=g_{\{r,s\}}=x_{r}x_{s}-y_{r}y_{s}$. Since $G$ is non-bipartite and $G\setminus e$ is bipartite, using \textup{\cite[Lemma~3.4]{ar4}}, it follows that 
\[\II_{G\setminus e}:g_{e}=\II_{G\setminus e}+\langle x_{i}y_{j}-x_{j}y_{i}: i,j\in N_{G\setminus e}(r) \text{ or } i,j\in N_{G\setminus e}(s) \rangle.
\]
Since $r$ and $s$ have degree one in $G\setminus e$, it follows that 
$\II_{G\setminus e}:g_{e}=\II_{G\setminus e}.$ Thus, we get that
$$\reg\!\left(\frac{R}{\II_{G\setminus e}:g_{e}}\right)+2\neq \reg\!\left(\frac{R}{\II_{G\setminus e}}\right).$$
Then by applying \textup{\cite[Lemma~3.1]{zbMATH05708656}} on exact sequence \eqref{exact-sequence}, we have 
$$\reg(R/\II_G)
=
\max\left\{
\reg\!\left(\frac{R}{\II_{G\setminus e}:g_e}\right)+1,\,
\reg\!\left(\frac{R}{\II_{G\setminus e}}\right)
\right\}.$$
 Therefore, we obtain that
\begin{equation*}\label{G and Ge}
\reg(R/\II_G)=\reg(R/\II_{G\setminus e})+1.
\end{equation*}

As $G\setminus e$ is bipartite, \cite[Corollary~6.2]{bms24} implies that $\II_{G\setminus e}$ is isomorphic to the binomial edge ideal $J_{G\setminus e}$ of $G\setminus e$. Furthermore, since $G\setminus e$ is a tree, it follows from \cite[Theorem~4.1]{JNR} that
\[
\reg(R/J_{G\setminus e})
\geq
|\iv(G\setminus e)|+1
=
|\iv(G)|-1.
\]
Therefore, we have
\[
\reg(R/\II_G)
=
\reg(R/\II_{G\setminus e})+1
=
\reg(R/J_{G\setminus e})+1
\geq
|\iv(G)|.
\]
This completes the proof.
\end{proof}

We are now in position to show that $\vn(\II_{G})\leq \reg(R/ \II_{G})$.

\begin{theorem}\label{thm:decorated-tree}
Let $G$ be a decorated tree. Then $\vn(\II_{G})\leq \reg(R/\II_{G}).$
\end{theorem}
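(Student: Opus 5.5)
Our plan is to bound both sides by a common combinatorial quantity. For the left-hand side, Corollary~\ref{cor: vpi inequals yng} gives $\vn(\II_G)\leq \gamma_n(G)$. We therefore construct an explicit set $D\in\dng$ and bound its size. For the right-hand side, we aim to show $\reg(R/\II_G)\geq |D|$ by reducing to Lemma~\ref{lem: unique decorated vertex} and to the tree bound of \cite[Theorem~4.1]{JNR}. If $G$ has no decorated vertex, then $G$ is a tree, hence bipartite, and $\II_G\cong J_G$. This case follows from the known inequality $\vn(J_G)\leq\reg(R/J_G)$ for trees, namely \cite[Corollary~B]{jaramillo2024connected} together with \cite[Theorem~4.1]{JNR}, as in the proof of Theorem~\ref{thm:v-number-regularity}. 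So assume $G$ has at least one decorated vertex.

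\emph{Construction of the dominating set.} Fix a root $r$. For each decorated vertex $v$, take all vertices of the odd cycle $C_v$. Then add all internal vertices of the underlying tree $T$, together with, if needed, one extra vertex so that each component of $G[D]$ contains a cycle. Concretely, let
\[D:=\iv(G)\cup \bigcup_{v\in S}V(C_v).\]
Since the internal vertices of $T$ plus the cycles dominate every leaf, $D$ is dominating. Because the cycles are joined to the internal part of the tree, $G[D]$ is connected whenever $T$ has an internal vertex, and it contains an odd cycle. Hence $G[D]$ is totally non-bipartite and $D\in\dng$.

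Every vertex of a cycle $C_v$ has degree at least $2$ in $G$, so $V(C_v)\subseteq\iv(G)$. Thus $|D|=|\iv(G)|$, apart from degenerate small cases (for instance when $G$ is a single odd cycle with pendant leaves, or $T$ is a single edge). These we check by hand, adding at most one vertex and verifying that the resulting bound is still met. So the target becomes $\reg(R/\II_G)\geq|\iv(G)|$.

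\emph{Regularity bound.} We generalize the argument of Lemma~\ref{lem: unique decorated vertex} by induction on the number $k$ of decorated vertices. Pick a maximal decorated vertex $v$ with respect to $\leq_r$, and an edge $e=\{a,b\}$ of $C_v$ not containing $v$. Both $a$ and $b$ have degree $2$ in $G$, so in $G\setminus e$ they have degree $1$. The colon ideal $\II_{G\setminus e}:g_e$ should then equal $\II_{G\setminus e}$, via \cite[Lemma~3.4]{ar4} if it extends to non-bipartite $G\setminus e$; when $k\geq2$ the graph $G\setminus e$ is still non-bipartite, so we must check that the cited lemma still applies, or use instead that $g_e$ is a nonzerodivisor modulo $\II_{G\setminus e}$. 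If so, the exact sequence \eqref{exact-sequence} yields $\reg(R/\II_G)=\reg(R/\II_{G\setminus e})+1$. Now $G\setminus e$ is a decorated tree with $k-1$ decorated vertices, where $C_v$ has become a path hanging at $v$, and $|\iv(G\setminus e)|=|\iv(G)|-2$. Iterating once per decorated vertex reduces to a tree $T'$ with $|\iv(T')|=|\iv(G)|-2k$, and
\[\reg(R/\II_G)=k+\reg(R/J_{T'})\geq k+|\iv(T')|+1=|\iv(G)|-k+1.\]
This falls short of $|\iv(G)|$ when $k\geq2$.

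\emph{Main obstacle.} The naive set $D$ is too large for several decorated vertices. The main step will be refining $D$: for each cycle $C_v$, include only the vertices needed for odd-cycle membership and domination, and share tree vertices, so that $|D|\leq|\iv(G)|-k+1$. Alternatively, we improve the regularity lower bound using the maximal vertices and the sets $\widetilde C_v$ from \eqref{C tilde}. Splitting $G$ at the parent of a maximal $v$ into $\widetilde C_v$ and the rest, and using additivity of regularity on vertex-disjoint pieces after deleting the parent, an induction on $k$ should give $\reg(R/\II_G)\geq\gamma_n(G)$. We try this $\widetilde C_v$-based induction first, matching each piece with Lemma~\ref{lem: unique decorated vertex}.
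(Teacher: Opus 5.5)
\textbf{Gap: the edge-deletion recursion fails for $k\ge 2$.} The worked-out regularity part of your plan does not go through. For $k\ge 2$, the graph $G\setminus e$ is still connected and non-bipartite, since it contains the other odd cycles. So $\pp^{+}(G\setminus e)=\langle x_i+y_i: i\in V(G)\rangle$ is a minimal prime of $\II_{G\setminus e}$. But
\[
g_e=x_ax_b-y_ay_b=(x_a+y_a)x_b-y_a(x_b+y_b)\in \pp^{+}(G\setminus e).
\]
Hence $g_e$ is a zero-divisor modulo $\II_{G\setminus e}$, and $\II_{G\setminus e}:g_e\supsetneq \II_{G\setminus e}$, even though $a$ and $b$ become leaves. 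So neither an extension of \cite[Lemma~3.4]{ar4} nor a nonzerodivisor argument can give $\reg(R/\II_G)=\reg(R/\II_{G\setminus e})+1$. That identity is available only at the step where deleting $e$ makes the graph bipartite, which is exactly why Lemma~\ref{lem: unique decorated vertex} is restricted to one decorated vertex. Consequently your bound $\reg(R/\II_G)\ge |\iv(G)|-k+1$ is unsupported.

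The combinatorial half of that plan is actually easy and correct. Start from $\iv(G)$ and remove, for all but one decorated $v$, one vertex of $C_v\setminus\{v\}$. This keeps $G[D]$ connected, dominating and non-bipartite, so $\gamma_n(G)\le|\iv(G)|-k+1$. The failure is entirely on the regularity side.

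\textbf{Your fallback is the paper's route, but it is not carried out.} The $\widetilde C_v$-induction is exactly how the paper argues, but you leave it at ``should give''. The ingredients are as follows.
\begin{enumerate}
\item For $k\ge 2$, a maximal decorated $v$ is not the root, so it has a parent $w$, and $w$ is undecorated.
\item Since $G_w$ is an induced subgraph, $\reg(R/\II_G)\ge \reg(R/\II_{G_w})$ by \cite[Theorem~3.7]{zbMATH07352276}.
\item Regularity is additive over the components of $G_w$.
\item $G[\widetilde C_v]$ is one such component, and Lemma~\ref{lem: unique decorated vertex} gives it regularity at least $|\iv(G[\widetilde C_v])|$.
\item The remaining components are handled by induction.
\item If $T'$ is a totally non-bipartite dominating set of $G\setminus(\widetilde C_v\cup\{w\})$, then $T'\cup \iv(G[\widetilde C_v])$ is one for $G$, because $w$ is adjacent to $v$.
\end{enumerate}

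When you write this out, one point needs more care than the paper's write-up gives it. The graph $G\setminus(\widetilde C_v\cup\{w\})$ is in general disconnected, and a component may contain no decorated vertex. For example, a leaf of $T$ adjacent to $w$ becomes an isolated vertex. Such a component has no totally non-bipartite dominating set, so the inductive hypothesis says nothing useful about it. These undecorated branches at $w$ must be dominated through $w$ and paid for by a separate regularity estimate.
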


\begin{proof}
Let $T$ be the underlying tree of $G$ with $S\subseteq V(T)$ the set of decorated vertices. If $S=\emptyset$, then $G$ is a tree, and in particular bipartite. Hence, \cite[Corollary~6.2]{bms24} implies that $\II_{G}$ is isomorphic to the binomial edge ideal $J_{G}$ of $G$. Moreover, since $G$ is a tree, by \Cref{cor: d=2} and \Cref{thm:v-number-regularity}, we have $\vn(J_{G})\leq\reg(R/J_{G})$, from which we obtain $\vn(\II_G) \leq \reg(R/\II_G)$.

Assume now that $S\neq \emptyset$. Then we will show that 
\begin{equation*}\label{yng}
\gamma_{n}(G)\leq \reg(R/\II_{G})
\end{equation*}
as proving this implies  $\vn(\II_{G}) \leq \gamma_{n}(G) \leq \reg(R/\II_G)$ holds by~\Cref{cor: vpi inequals yng}. We proceed by induction $\size{S}\geq 1$ to show the inequality $\gamma_{n}(G)\leq \reg(R/\II_{G})$. For the base case, let $\size{S}=1$. In this case, by Lemma~\ref{lem: unique decorated vertex}, we have $\reg(R/\II_{G})\geq \size{\iv(G)}.$
On the other hand, $\iv(G)$ is connected, dominant and non-bipartite, so we have $\iv(G)\geq \gamma_{n}(G)$. Thus by \Cref{cor: vpi inequals yng}, we have
$$\vn(\II_{G}) \leq \gamma_{n}(G)\leq \size{\iv(G)} \leq \reg(R/\II_{G}). $$
For the inductive step, assume that $\size{S}\geq 2$ and that the inequality $\gamma_{n}(G)\leq \reg(R/\II_{G})$ holds for any decorated tree with at most $\size{S}-1$ decorated vertices. Fix a vertex $r\in V(T)$ and view $T$ as a rooted tree. Let $v\in S$ be a maximal decorated vertex, with respect to the weak order in~\Cref{weak order}, and let $w$ be its unique parent. With the notation from~\Cref{C tilde}, and using that $v$ is maximal, it follows that $\widetilde{C}_{v}$ is a decorated tree with $v$ as its unique decorated vertex. Hence, applying Lemma~\ref{lem: unique decorated vertex} to it, we obtain
\begin{equation*}\label{reg Cv}
\reg(R/\II_{G[\widetilde{C}_{v}]})\geq \lvert\iv(G[\widetilde{C}_{v}])\rvert.
\end{equation*}
On the other hand, by definition, no two adjacent vertices of $G$ are decorated, so $w$ is not decorated. It then follows that $N_{G}(\widetilde{C}_{v})=\{w\}$ and that $G_{\widetilde{C}_{v}\cup \{w\}}$ is a decorated tree. Since $G_{\widetilde{C}_{v}\cup \{w\}}$ has one less decorated vertex than $G$, it follows from inductive hypothesis that 
\begin{equation*}\label{induc hypothesis}
\reg(R/\II_{G_{\widetilde{C}_{v}\cup \{w\}}})\geq \gamma_{n}(G_{\widetilde{C}_{v}\cup \{w\}}).
\end{equation*}
Note that the induced subgraph $G_{w}=G[V(G)\setminus \{w\}]$ has connected components $G_{\widetilde{C}_{v}\cup \{w\}}$ and $G[\widetilde{C}_{v}]$. Since $G_{w}$ is induced, it follows from \textup{\cite[Theorem~3.7]{zbMATH07352276}} that 
\[\reg(R/\II_{G})\geq \reg(R/\II_{G_{w}})=\reg(R/\II_{G[\widetilde{C}_{v}]})+\reg(R/\II_{G_{\widetilde{C}_{v}\cup \{w\}}}).\]
Thus, we have $$\reg(R/\II_{G})\geq \lvert\iv(G[\widetilde{C}_{v}])\rvert+ \gamma_{n}(G_{\widetilde{C}_{v}\cup \{w\}}).$$ Therefore, to prove that $\gamma_{n}(G)\leq \reg(R/\II_G)$, it is enough to show that
$$\lvert\iv(G[\widetilde{C}_{v}])\rvert+ \gamma_{n}(G_{\widetilde{C}_{v}\cup \{w\}})\geq \gamma_{n}(G).$$
Let $T\subset V(G)\setminus (\widetilde{C}_{v}\cup \{w\})$ be any dominant subset of vertices of $G_{\widetilde{C}_{v}\cup \{w\}}$ such that $G[T]$ is totally non-bipartite and such that $\gamma_{n}(G_{\widetilde{C}_{v}\cup \{w\}})=\size{T}$. Since the subsets of vertices $\widetilde{C}_{v}$ and $V(G)\setminus (\widetilde{C}_{v}\cup \{w\})$ form the connected components of $G_{w}$, it follows that $T\cup \iv(G[\widetilde{C}_{v}])$ is also dominant and totally non-bipartite on $G_{w}$. Moreover, since $w$ is adjacent to $v$, it follows that it is also dominant in $G$. Hence, 
\[\gamma_{n}(G)\leq \lvert T\cup \iv(G[\widetilde{C}_{v}])\rvert=\size{T}+\lvert\iv(G[\widetilde{C}_{v}])\rvert=\gamma_{n}(G_{\widetilde{C}_{v}\cup \{w\}})+\lvert\iv(G[\widetilde{C}_{v}])\rvert.\]
Therefore, we obtain that $\gamma_{n}(G) \leq \reg(R/\II_G)$.
\end{proof}

\begin{proposition}\label{prop:path-triangle}
Let \(G\) be a simple graph on \([n]\) obtained by attaching a path of length at least \(1\) to each vertex of a triangle. Then $\vn(\II_G) \leq \reg(R/\II_G)$.
\end{proposition}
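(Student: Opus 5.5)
Following the strategy used in \Cref{thm:decorated-tree}, I will bound both sides by $\gamma_n(G)$. By \Cref{cor: vpi inequals yng}, $\vn(\II_G)\leq \gamma_n(G)$, so it suffices to show $\gamma_n(G)\leq \reg(R/\II_G)$.

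\emph{Upper bound on $\gamma_n(G)$.} Let the triangle be $\{a,b,c\}$, with paths $P_a,P_b,P_c$ attached at $a,b,c$ of lengths $\ell_a,\ell_b,\ell_c\geq 1$. Then $\iv(G)$ consists of the triangle together with all non-leaf vertices of the three paths. It is dominating, and $G[\iv(G)]$ is connected and contains the triangle, hence is non-bipartite. Therefore $\gamma_n(G)\leq |\iv(G)| = 3+(\ell_a-1)+(\ell_b-1)+(\ell_c-1)$. (Shorter dominating sets could be used, but this bound should be enough.)

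\emph{Lower bound on the regularity.} I will copy the argument of \Cref{lem: unique decorated vertex}. Take the triangle edge $e=\{b,c\}$ and $g_e=x_bx_c-y_by_c$, and use the short exact sequence \eqref{exact-sequence}. Since $G$ is non-bipartite and $G\setminus e$ is a tree, \cite[Lemma~3.4]{ar4} gives
\[
\II_{G\setminus e}:g_e=\II_{G\setminus e}+\langle x_iy_j-x_jy_i : i,j\in N_{G\setminus e}(b)\text{ or } i,j\in N_{G\setminus e}(c)\rangle .
\]
Here $b$ and $c$ each have degree $2$ in $G\setminus e$, so the colon is no longer trivial as it was in the lemma; this is the main obstacle. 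Each added generator $x_iy_j-x_jy_i$ joins the two neighbours of $b$, namely $a$ and the next vertex on $P_b$, and similarly for $c$. So $\II_{G\setminus e}:g_e$ should be the binomial edge ideal $J_H$ of the bipartite graph $H$ obtained by adding two edges that close triangles at $b$ and $c$. Identifying $\II_{G\setminus e}$ with $J_{G\setminus e}$ via \cite[Corollary~6.2]{bms24} should make this the parity-adjusted binomial edge ideal of a block graph. Its regularity can be bounded below by the number of internal vertices of the resulting block graph, or by an induced subgraph argument through \cite[Theorem~3.7]{zbMATH07352276}.

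To avoid computing the colon exactly, I will first try the cheaper route. The exact sequence gives $\reg(R/\II_G)\geq \reg(R/\II_{G\setminus e})$ or $\reg(R/\II_G)\geq\reg(R/(\II_{G\setminus e}:g_e))+1$, depending on which term dominates. From \cite[Theorem~4.1]{JNR} for the tree $G\setminus e$ we get $\reg(R/J_{G\setminus e})\geq |\iv(G\setminus e)|+1=|\iv(G)|+1$, since $b,c$ remain internal. If the lemma on regularity of short exact sequences gives $\reg(R/\II_G)\geq \min\{\reg(R/\II_{G\setminus e}),\ \reg(R/(\II_{G\setminus e}:g_e))+1\}$, I only need $\reg(R/(\II_{G\setminus e}:g_e))\geq |\iv(G)|-1$. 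I expect to get this from the block-graph regularity formula, which counts internal vertices minus contributions from the new triangles. If that fails, the fallback is to apply \cite[Theorem~3.7]{zbMATH07352276} to the induced subgraph obtained by deleting a neighbour of the triangle on one path. Its components are decorated trees and paths, so \Cref{thm:decorated-tree} and \Cref{lem: unique decorated vertex} apply to them, and the remaining vertices in the domination count can be absorbed.
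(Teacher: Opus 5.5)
Your bound $\gamma_n(G)\le|\iv(G)|=n-3$ is the same as the paper's, and it is sharp. The triangle is the only odd cycle, so every set in $\mathcal{D}_{n}(G)$ induces a connected subgraph containing the triangle, and dominating the three leaves forces it to contain all of $\iv(G)$. You therefore need $\reg(R/\II_G)\ge n-3$ with no slack. That regularity lower bound is where the proposal has a genuine gap.

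\textbf{The cheaper route.} Write $A=R/(\II_{G\setminus e}:g_e)(-2)$, $B=R/\II_{G\setminus e}$ and $C=R/\II_G$. Your route assumes $\reg(C)\ge\min\{\reg(B),\reg(A)-1\}$, and this is false in general. The lemma of \cite{zbMATH05708656} gives $\reg(C)=\max\{\reg(A)-1,\reg(B)\}$ only when $\reg(A)\neq\reg(B)$. When they are equal, you only get $\reg(C)\le\reg(B)$. For example, take $R=\KK[x]$, $I=(x^4)$ and $f=x^2$. Then $R/(I:f)(-2)=R/(x^2)(-2)$ and $R/I$ both have regularity $3$, but $R/(I+(f))=R/(x^2)$ has regularity $1$.

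This degenerate case actually occurs here. Take the net, $\ell_a=\ell_b=\ell_c=1$. Then $G\setminus e$ is a tree on $6$ vertices that is not a path but contains an induced path on $5$ vertices, so $\reg(B)=4$. With the colon formula you quote, $\II_{G\setminus e}:g_e$ corresponds to $J_H$, where $H$ is the cone from $a$ over $K_1\sqcup K_2\sqcup K_2$. Incidentally, $H$ is not bipartite. It contains an induced claw, and the standard decomposition at $a$ bounds its regularity above by $2$, so $\reg(R/J_H)=2$. Hence $\reg(A)=4=\reg(B)$, and the sequence gives no lower bound on $\reg(R/\II_G)$ at all. Your target $\reg(R/(\II_{G\setminus e}:g_e))\ge|\iv(G)|-1=2$ does hold in this example, so the failure lies in the inequality itself. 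Computing the colon more precisely cannot rescue this route.

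\textbf{The fallback.} Removing the vertex of $P_a$ adjacent to $a$ leaves a component consisting of the triangle with pendant paths at two of its vertices. This is not a decorated tree, because the cycle meets the rest of the graph in two vertices. So neither \Cref{lem: unique decorated vertex} nor \Cref{thm:decorated-tree} applies. Bounding that component by its longest induced path gives $\ell_b+\ell_c+1$. Adding the leftover path on $\ell_a-1$ vertices gives only $n-4$ when $\ell_a\ge 2$, and there is no slack in $\gamma_n(G)$ to absorb the missing $1$.

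\textbf{The fix.} Delete the triangle vertex $a$ itself, as the paper does. Then $G\setminus\{a\}$ is the disjoint union of two paths: $P_a\setminus\{a\}$ on $\ell_a$ vertices, and the path $P_b\cup\{b,c\}\cup P_c$ on $\ell_b+\ell_c+2$ vertices. By \cite[Theorem~3.2]{zbMATH07352276} their regularities are $\ell_a-1$ and $\ell_b+\ell_c+1$, which sum to $n-3$. Then \cite[Theorem~3.7]{zbMATH07352276} gives $\reg(R/\II_G)\ge n-3$ directly, with no exact sequence needed.
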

\begin{proof}
Let $\{x_{1},y_{1},z_{1}\}$ be the vertices of the triangle, and let
\[P_{1}:=x_{1},\ldots,x_{i}, \quad P_{2}:=y_{1},\ldots,y_{j} \quad \text{and} \quad P_{3}:=z_{1},\ldots,z_{k}\]
be the paths attached to the triangle $\{x_{1},y_{1},z_{1}\}$.
Note that $\{x_i,y_j,z_k\}$ is the set of leaves of $G$. Since $[n]\setminus \{x_i,y_j,z_k\}$ is non-bipartite, connected and dominant, it follows that $\gamma_{n}(G)\leq n-3$. Thus, ~\Cref{cor: vpi inequals yng} implies that $\vn(\II_{G})\leq \vpi=\gamma_{n}(G)\leq n-3$.

Consider the induced subgraph $G\setminus \{x_1\}$. This graph has connected components $G_{1}$ and $G_{2}$ given by the vertices
\[V_{1}:=\{x_{2},\ldots,x_{i}\}\quad \text{and} \quad V_{2}:=\{y_{1},\ldots,y_{j}\}\cup \{z_{1},\ldots,z_{k}\}.\]
Since $G_{1}$ and $G_{2}$ are path graphs of lengths $i-1$ and $j+k$, respectively, it follows from \textup{\cite[Theorem~3.2]{zbMATH07352276}} that $\reg(R/\II_{G_{1}})=i-2$ and $\reg(R/\II_{G_{2}})=j+k-1$. Since $G_{1}$ and $G_{2}$ are the connected components of $G\setminus \{x_{1}\}$, it follows that $\reg(R/\II_{G\setminus \{x_{1}\}})=i+j+k-3=n-3$. Hence, since $G\setminus \{x_1\}$ is induced, it follows from \textup{\cite[Theorem~3.7]{zbMATH07352276}} that 
\[\reg(R/\II_{G})\geq \reg(R/\II_{G\setminus \{x_{1}\}})= n-3.\]
Hence, $\vn(\II_{G})\leq n-3\leq \reg(R/\II_{G})$, as desired.
\end{proof}

\begin{proposition}\label{prop:chord-even-cycle}
Let $G$ be a non-bipartite graph on $[n]$ obtained by adding a chord in
an even cycle $C_n$. Then $\vn(\II_G) \leq \reg(R/\II_G)$.
\end{proposition}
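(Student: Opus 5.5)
We follow the same strategy as in Proposition~\ref{prop:path-triangle}: bound $\vn(\II_G)$ from above by $\gamma_n(G)$ through Corollary~\ref{cor: vpi inequals yng}, and bound $\reg(R/\II_G)$ from below by the regularity of a suitable induced subgraph.

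Let the even cycle have vertices $1,2,\ldots,n$ in cyclic order, with $n$ even. Let the chord be $\{1,k\}$. Since $G$ is non-bipartite, the two cycles created by the chord must be odd, so $k$ is even. These cycles are $C'=\{1,\ldots,k\}$ of length $k$ and $C''=\{k,\ldots,n,1\}$ of length $n-k+2$, and both lengths are odd up to the parity bookkeeping of the chord.

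\textbf{Upper bound for $\gamma_n(G)$.} We build a small dominating set $S$ such that $G[S]$ is totally non-bipartite. The natural candidate is a connected set containing one odd cycle together with a dominating portion of the rest. Take $S=\{1,k\}\cup\{$vertices of the shorter side$\}$, arranged so that $G[S]$ contains an odd cycle, and add a dominating set of the remaining path. A path of length $m$ needs about $\lceil m/3\rceil$ vertices to be dominated, but connectivity with the cycle is not required, since only total non-bipartiteness is needed. Still, each component of $G[S]$ must be non-bipartite, so extra components are not allowed. Hence $S$ must be a single connected set containing an odd cycle, and the count is about $n-2$: delete two non-adjacent leaves-at-distance vertices from the cycle, for instance vertices $a$ and $b$ on the longer side whose neighbours are both in $S$. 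This gives $\gamma_n(G)\le n-2$ in general, and possibly $n-3$ when the sides are long enough.

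\textbf{Lower bound for $\reg(R/\II_G)$.} We delete one vertex $v$ of degree two so that $G\setminus v$ becomes a triangle-free-type structure whose regularity is known. The main choice is $v$ adjacent to an endpoint of the chord. Then $G\setminus v$ is an odd cycle with a pendant path attached, a decorated tree with a unique decorated vertex. Lemma~\ref{lem: unique decorated vertex} then gives $\reg(R/\II_{G\setminus v})\ge |\iv(G\setminus v)|$. Finally, \cite[Theorem~3.7]{zbMATH07352276} yields $\reg(R/\II_G)\ge \reg(R/\II_{G\setminus v})$. In $G\setminus v$ on $n-1$ vertices there is exactly one leaf, so $|\iv(G\setminus v)|=n-2$.

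\textbf{Conclusion and main obstacle.} Combining the two bounds gives $\vn(\II_G)\le\gamma_n(G)\le n-2\le\reg(R/\II_G)$. The main obstacle is the matching of the two counts: we must ensure that $\gamma_n(G)\le n-2$ always holds. This requires exhibiting a specific $(n-2)$-set, namely $[n]\setminus\{v,u\}$ with $u$ chosen so that the remaining induced graph is still connected, dominating, and contains an odd cycle. We must also handle degenerate small cases, such as a chord creating a triangle, separately by direct inspection.
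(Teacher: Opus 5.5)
\textbf{Verdict.} Your route differs from the paper's, and it can be completed, but as written it has a gap exactly where you say the main obstacle is: the bound $\gamma_{n}(G)\le n-2$ is not proved, the set you describe does not work, and the bound is false for $n=4$.

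\textbf{Comparison.} The paper only needs $\gamma_{n}(G)\le n-1$, obtained by deleting one vertex off an odd cycle. It then imports the exact value $\reg(R/\II_G)=n-1$ from \cite[Theorem~5.13]{sz23}. You derive only $\reg(R/\II_G)\ge n-2$, using \Cref{lem: unique decorated vertex} on $G\setminus v$ together with \cite[Theorem~3.7]{zbMATH07352276}. You therefore need the sharper $\gamma_{n}(G)\le n-2$. Your lower-bound half is sound, with one correction: $G\setminus v$ has \emph{at most} one leaf, not exactly one.

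\textbf{The gap, in three parts.}

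(a) With chord $\{1,k\}$, the cycle $1,\dots,k$ has length $k$. So non-bipartiteness forces $k$ \emph{odd}, not even.

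(b) Your construction fails. The only odd cycles of $G$ are the two chord cycles, and both contain $1$ and $k$. Hence if $G[S]$ is totally non-bipartite, it is connected and contains one chord cycle entirely.
\begin{itemize}
\item Deleting two \emph{non-adjacent} vertices of one side, as you propose, splits off the path between them as a bipartite component.
\item Deleting vertices from both sides destroys both odd cycles.
\end{itemize}
What works is $S=[n]\setminus\{v,u\}$, where $v$ is a neighbour of a chord endpoint and $u$ is the other neighbour of $v$, both interior to the same side. Then $G[S]$ is the other odd cycle with a path hanging at the far chord endpoint. The vertex $v$ is dominated by the chord endpoint, and $u$ by its remaining neighbour. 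This requires a side with at least two interior vertices. The sides have $k-2$ and $n-k$ interior vertices, both odd and summing to $n-2$, so such a side exists for every $n\ge 6$. The same connectivity argument shows $\gamma_{n}(G)=n-2$ exactly, so your ``possibly $n-3$'' never occurs.

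(c) For $n=4$ (the diamond), every totally non-bipartite induced subgraph contains a triangle. So $\gamma_{n}(G)=3=n-1$, and your claimed $\gamma_{n}(G)\le n-2$ is false there. The exceptional case is $n=4$, not ``a chord creating a triangle'' in general; for $n\ge 6$ triangles cause no trouble. Your own lower bound rescues it: here $G\setminus v=K_3$ has no leaves, so \Cref{lem: unique decorated vertex} gives $\reg(R/\II_G)\ge 3\ge \gamma_{n}(G)$.

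\textbf{What the repaired route gives.} With these repairs the argument is complete and avoids the external regularity formula, at the cost of the extra domination analysis. It also yields the sharper bound $\vn(\II_G)\le n-2$ for $n\ge 6$.
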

\begin{proof}
Let $\{1,\ldots,n\}$ and $\{\{i,i+1\}:i\in [n]\}$ be the vertices and edges of $C_{n}$, respectively, and let $\{1,j\}$ be the added chord with $1\leq i<j\leq n$. Since $n$ is even, the intervals $[i+1,j-1]$ or $[j+1,i-1]$ both have an even number of vertices or both have an odd number of vertices. If both intervals are even then $G$ is bipartite, and \cite[Corollary~6.2]{bms24} implies that $\II_{G}$ is isomorphic to the binomial edge ideal $J_{G}$ of $G$. Applying \textup{\cite[Corollary~3.7]{jaramillo2024connected}}, we obtain that $\vn(J_{G})\leq \gamma_{c}(G)$, where $\gamma_{c}(G)$ is the minimal size of a connected and dominant subset of vertices of $G$, which in this case is clearly at most $n-1$, since all but one vertex give a connected and dominant set. Hence $\vn(\II_{G})\leq n-1$ in this case.

If both intervals $[i+1,j-1]$ and $[j+1,i-1]$ are odd, then the induced subgraph on the vertices $[n]\setminus \{i\}$ is connected, dominant and non-bipartite given that it contains the odd cycle $j+1,j+2,\ldots,i-1,i,j$. It follows that $\gamma_{n}(G)\leq n-1$. Thus, ~\Cref{cor: vpi inequals yng} implies that $\vn(\II_{G})\leq \gamma_{n}(G)\leq n-1$. On the other hand, by \cite[Theorem 5.13]{sz23}, we have $\reg(R/\II_G)=n-1$. This proves the desired inequality.
\end{proof}

\begin{proposition}\label{prop:chord-odd-cycle}
Let $G$ be a graph obtained from an odd cycle $C_n$ by adding a chord. Then, $\vn(\II_G) \leq \reg(R/\II_G)$.
\end{proposition}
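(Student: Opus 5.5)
The plan is to follow the template of Propositions~\ref{prop:path-triangle} and~\ref{prop:chord-even-cycle}. We produce an explicit dominating set $S$ with $G[S]$ totally non-bipartite, which bounds $\vn(\II_G)\leq \gamma_n(G)\leq |S|$ by \Cref{cor: vpi inequals yng}. We then compare $|S|$ with a lower bound for $\reg(R/\II_G)$, taken either from the literature on cycles with a chord or from the induced subgraph inequality \textup{\cite[Theorem~3.7]{zbMATH07352276}}.

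Label the cycle $1,\ldots,n$ with $n$ odd, and let the chord be $\{1,j\}$ with $3\leq j\leq n-1$. The chord splits $C_n$ into two cycles, $1,2,\ldots,j$ and $j,j+1,\ldots,n,1$, whose lengths sum to $n+2$. Since $n$ is odd, exactly one of these two cycles is odd. Hence $G$ is non-bipartite, so the results of \Cref{explicit formula for parity binomial edge ideals} apply. Remove one vertex $v$ of degree two that does not lie on the odd cycle. Such a vertex exists because the even cycle has length at least $4$ and so has an interior vertex outside the odd cycle. Then $[n]\setminus\{v\}$ induces a connected graph containing the odd cycle, and it is dominating because $v$ has neighbours in it. This gives $\gamma_n(G)\leq n-1$. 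With more care (removing two nonadjacent degree-two vertices, or one vertex on each side when possible), I would expect to get $n-2$ or better. That sharper bound is only needed if the regularity turns out to be smaller than $n-1$.

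For the regularity side, the first thing I would try is to quote a known formula. The reference \textup{\cite{sz23}} used in \Cref{prop:chord-even-cycle} very likely also treats odd cycles with a chord, and I expect it to give $\reg(R/\II_G)=n-1$ or $n-2$ depending on the parities of the two sides. If it does not, the fallback is to delete a vertex $u$ of degree two adjacent to $1$ or to $j$. The remaining graph is either a path with a pendant structure or a unicyclic graph. For such a graph the regularity can be bounded below: paths are handled by \textup{\cite[Theorem~3.2]{zbMATH07352276}}, and a unicyclic graph with an odd cycle is a decorated tree, so Lemma~\ref{lem: unique decorated vertex} gives $\reg\geq|\iv|$. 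Then \textup{\cite[Theorem~3.7]{zbMATH07352276}} transfers the bound to $G$. Choosing $u$ so that the odd cycle survives, the remaining graph is a decorated tree (an odd cycle with a path attached at a vertex). Its internal vertices number $n-2$ or $n-3$, so Lemma~\ref{lem: unique decorated vertex} yields $\reg(R/\II_G)\geq n-3$ in the worst case.

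The main obstacle is matching these two bounds exactly. The crude dominating set has size $n-1$, while the lower bound from deleting a vertex may only reach $n-2$ or $n-3$. I would therefore aim to refine the dominating set. The odd cycle's vertices together with the internal vertices of the attached even-side path form a connected non-bipartite set, and dropping the two path vertices adjacent to the ends (which are dominated by $1$ and $j$) saves two more. This gives $\gamma_n(G)\leq n-3$ whenever the even side has at least two interior vertices, which should match $|\iv|$ of the decorated tree obtained after deletion. The degenerate small cases, such as an even side of length $4$ with a single interior vertex, or the triangle cases $j=3$ and $j=n-1$, I would check directly.
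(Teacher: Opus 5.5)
Your argument does not close, and the bridge you propose is false. What you actually establish is $\gamma_n(G)\le n-1$ and, as you state it, $\reg(R/\II_G)\ge n-3$. To reconcile these you claim $\gamma_n(G)\le n-3$, but in fact $\gamma_n(G)=n-2$ for \emph{every} odd cycle with a chord.

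The reason is as follows. $G$ has exactly three cycles. Its two odd ones are the cycle $C$ formed by the chord and the odd side, and $C_n$ itself, and both contain $V(C)$. So if $G[S]$ is totally non-bipartite, it has a single component and $S\supseteq V(C)$. Connectivity then forces $S$ to omit a contiguous block of interior vertices of the even side, and domination forces that block to have at most two vertices.

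Your concrete constructions fail accordingly. Dropping the two even-side vertices adjacent to $1$ and $j$ leaves the middle of the even path as a bipartite component once the even side has at least four interior vertices. When the even side has exactly two interior vertices, it gives $n-2$, not $n-3$. Dropping two nonadjacent vertices, or one vertex on each side, either cuts off a bipartite segment or destroys every odd cycle. The smallest case, $n=5$ with chord $\{1,3\}$, already has $\gamma_n(G)=3=n-2$.

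The repair is to aim for $n-2$ on both sides. For the upper bound, delete two \emph{adjacent} interior vertices $k,k+1$ of the even side. The rest stays connected through the chord endpoints, contains the odd cycle, and dominates $k$ and $k+1$ via their other neighbours. Hence $\gamma_n(G)\le n-2$, and $\vn(\II_G)\le n-2$ by \Cref{cor: vpi inequals yng}; this is exactly the paper's choice.

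For the regularity, the paper simply quotes $n-2\le\reg(R/\II_G)$ from \cite[Theorem~4.6]{jks20}. Your fallback already yields this if you count correctly. When $u$ is the even-side neighbour of a chord endpoint, $G\setminus u$ is the odd cycle with a pendant path of at least one vertex. It therefore has $n-1$ vertices and exactly one leaf. Then \Cref{lem: unique decorated vertex} together with \cite[Theorem~3.7]{zbMATH07352276} gives $\reg(R/\II_G)\ge|\iv(G\setminus u)|=n-2$.

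With these two corrections your proof is complete. Unlike the paper's, it keeps the regularity half internal to the paper's own lemmas, so the guess about \cite{sz23} is unnecessary.
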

\begin{proof}
Let $\{1,\ldots,n\}$ and $\{\{i,i+1\}:i\in [n]\}$ be the vertices and edges of $C_{n}$, respectively. Let $\{i,j\}$ be the added chord with $1\leq i<j\leq n$. Since $n$ is odd, at least one of the intervals $[i+1,j-1]$ or $[j+1,i-1]$ has an odd number of vertices. Assume, without loss of generality that $[j+1,i-1]$ has an odd number of vertices and that $[i+1,j-1]$ has an even number of vertices, which has size at least two then. Let $\{k,k+1\}$ be two internal vertices in the interval $[i+1,j-1]$. Note the the induced subgraph on the vertices $[n]\setminus \{k,k+1\}$ is connected, dominant, and non-bipartite given that it contains the odd cycle $j+1,j+2,\ldots,i-1,i,j$. It follows that $\gamma_{n}(G)\leq n-2$. Thus, ~\Cref{cor: vpi inequals yng} implies that $\vn(\II_{G})\leq \gamma_{n}(G)\leq n-2$. On the other hand, by \cite[Theorem 4.6]{jks20} we have $n-2 \leq \reg(R/\II_G)$, which gives $\vn(\II_G) \leq \reg(R/\II_G)$.
\end{proof}

\section{v-number of generalized binomial edge ideals}\label{generalized binomial edge ideals}
In this section, we discuss the $\vn$-number of generalized binomial edge ideals. Recall definition of generalized binomial edge ideals from ~\Cref{def: generalized binomial edge ideals}. 
For each subset $T\subseteq [n]$, we define the ideal
\begin{equation}\label{PT}P_{T}(K_{m},G):=\langle x_{i,j}: i\in [m], j\in T \rangle+J_{K_{m},\widetilde{G}_1}+\cdots +J_{K_{m},\widetilde{G}_{c(T)}},\end{equation}
where $G_{1},\ldots,G_{c(T)}$ are the connected components of the subgraph induced by $G$ on $[n]\setminus T$ and $\widetilde{G}_i$ denotes the complete graph on the vertices of $G_{i}$ for each $1\leq i\leq c(T)$.

For simplicity, we will denote the localized v-number of $J_{K_{m},G}$ with respect to $P_{T}(K_{m},G)$ by $\vn_{T}(J_{K_{m},G})$. Our goal in this section, is to show that the following sequence of integer numbers is weakly-decreasing:
\begin{equation}\label{sequence}\vn(J_{G}), \vn(J_{K_{2},G}),\vn(J_{K_3, G}),\ldots \ldots\end{equation}
\begin{theorem}\textup{\cite[Corollary~4 and Theorem~7]{Rauh13}}\label{thm: decomposition generalized ideals}
The ideal $J_{K_{m},G}$ is radical and its minimal prime decomposition is $J_{K_{m},G}=\cap_{T}P_{T}(K_{m},G)$, where $T$ ranges over all cut sets of $G$.
\end{theorem}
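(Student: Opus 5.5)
The plan is to follow the classical strategy of Herzog--Hibi--Hreinsdóttir--Kahle--Rauh for binomial edge ideals, replacing $2\times 2$ minors of a $2\times n$ matrix by $2\times2$ minors of the generic $m\times n$ matrix $X=(x_{i,j})$. The proof has four steps: (a) each $P_{T}(K_m,G)$ is prime and contains $J_{K_m,G}$; (b) every prime containing $J_{K_m,G}$ contains some $P_T(K_m,G)$; (c) $J_{K_m,G}$ is radical; (d) $P_T(K_m,G)$ is minimal exactly when $T$ is a cut set. Combining (a)--(c) gives $J_{K_m,G}=\bigcap_{T\subseteq[n]}P_T(K_m,G)$, and (d) prunes this intersection to cut sets.

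For (a), each $J_{K_m,\widetilde G_i}$ is the ideal of $2\times2$ minors of the generic $m\times|V(G_i)|$ submatrix. This ideal is prime, and its quotient is geometrically integral, being a Segre-type determinantal ring. The summands of $P_T(K_m,G)$ involve pairwise disjoint sets of variables. Hence $R/P_T(K_m,G)$ is a tensor product over $\KK$ of geometrically integral domains, so it is a domain. Containment $J_{K_m,G}\subseteq P_T(K_m,G)$ is immediate. An edge meeting $T$ gives binomials lying in $\langle x_{i,j}:j\in T\rangle$. An edge inside $[n]\setminus T$ lies in a single component $G_i$, and its binomials are minors of the $\widetilde G_i$ block.

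For (b), let $P\supseteq J_{K_m,G}$ be prime and let $T$ be the set of columns $j$ with $x_{i,j}\in P$ for all $i$. We must show that for $k,\ell$ in the same component of $G\setminus T$, every minor $[k,\ell]_{ab}:=x_{a,k}x_{b,\ell}-x_{a,\ell}x_{b,k}$ lies in $P$. We induct on the length of a path $k=v_0,v_1,\dots,v_r=\ell$ in $G\setminus T$. Since $v_1\notin T$, some $x_{c,v_1}\notin P$. The three-term Plücker-type identity
\[x_{c,v_1}[k,\ell]_{ab}=x_{a,v_1}[k,\ell]_{cb}\pm\cdots\]
expresses $x_{c,v_1}[k,\ell]_{ab}$ as a combination of minors involving the column pairs $(k,v_1)$, $(v_1,\ell)$ and $(k,\ell)$ in rows $a,b,c$. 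Combining these relations for all rows, the minors of $(k,v_1)$ lie in $P$ because that pair is an edge, and the minors of $(v_1,\ell)$ lie in $P$ by induction. It follows that $x_{c,v_1}[k,\ell]_{ab}\in P$, and primality gives $[k,\ell]_{ab}\in P$. Some care is needed because the identity mixes the target minor with other row choices; I would first prove the statement for all row pairs simultaneously, using that the $3\times 3$ minor on rows $a,b,c$ and columns $k,v_1,\ell$ expands along the middle column.

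For (c), I expect radicality to be the main obstacle. My first attempt would be to exhibit a Gröbner basis of $J_{K_m,G}$ indexed by admissible paths of $G$ and pairs of rows, with respect to a lexicographic order, and to check that all leading terms are squarefree. This is what Rauh does, extending the binomial edge ideal argument. Buchberger's criterion then reduces to the path-concatenation case analysis already known for $m=2$, carried out row-pair by row-pair. Squarefree initial ideal implies radical.

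Finally, for (d), I would compare $P_T(K_m,G)$ and $P_{T\setminus\{j\}}(K_m,G)$. If $j$ does not merge components when added back, then $P_{T\setminus\{j\}}(K_m,G)\subseteq P_T(K_m,G)$, so $P_T(K_m,G)$ is not minimal. Conversely, if every $j\in T$ reconnects components, one checks directly that no $P_{T'}(K_m,G)$ is strictly contained in $P_T(K_m,G)$. For this I would use that $P_{T'}\subseteq P_T$ forces $T'\subseteq T$ and forces each component of $G\setminus T$ to lie within a component of $G\setminus T'$.
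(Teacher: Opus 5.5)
Your outline (a)--(d) has the same architecture as Rauh's proof, which the paper only cites. Steps (a) and (b) are sound. In (b) you can drop the $3\times 3$ detour: for any rows $a,b,c$ one has directly $x_{c,v}[k,\ell]_{ab}=x_{b,\ell}[k,v]_{ac}-x_{a,\ell}[k,v]_{bc}+x_{c,k}[v,\ell]_{ab}$.

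The genuine gap is in (c), which carries all of the radicality. The $m=2$ basis does not transfer row-pair by row-pair, and Buchberger's criterion does not reduce to the $m=2$ case analysis. The reason is that S-pairs of minors taken in different row pairs create new basis elements whose path multipliers use variables from rows outside the pair. For instance, let $m=3$, let $G$ have edges $\{1,3\},\{2,3\}$, and use the lex order $x_{11}>x_{12}>x_{13}>x_{21}>\cdots>x_{33}$.
\begin{itemize}
\item The union over $p<q$ of the $m=2$ bases is $\{[1,3]_{pq},[2,3]_{pq},x_{p3}[1,2]_{pq}\}$, with leading terms $x_{p1}x_{q3}$, $x_{p2}x_{q3}$ and $x_{p1}x_{p3}x_{q2}$.
\item The S-pair $x_{12}[1,3]_{23}-x_{21}[2,3]_{13}$ reduces modulo $[2,3]_{12}$ to $x_{13}[1,2]_{23}$.
\item Its leading term $x_{13}x_{21}x_{32}$ is divisible by none of the leading terms above.
\end{itemize}
So the correct basis is strictly larger, even for a fixed path and fixed row pair. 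In this example both $x_{13}[1,2]_{23}$ and $x_{23}[1,2]_{23}$ are needed. In general, an interior vertex $v>\ell$ must also be allowed to contribute $x_{c,v}$ with $c<a$, and symmetrically for $v<k$. The Buchberger check then has to control these cross-row interactions. That verification is the actual content of Rauh's Gr\"obner basis theorem, and your sketch does not supply it.

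There is also a slip in (d). Once $T'\subseteq T$, the condition that each component of $G\setminus T$ lies inside a component of $G\setminus T'$ holds automatically, so it cannot yield a contradiction. What $P_{T'}(K_m,G)\subseteq P_T(K_m,G)$ really forces is the reverse: the vertices outside $T$ of each component of $G\setminus T'$ lie in a single component of $G\setminus T$. Otherwise a minor $[k,\ell]_{ab}$ with $k,\ell$ in different components of $G\setminus T$ would lie in $P_{T'}(K_m,G)$ but not in $P_T(K_m,G)$. With this criterion, any $j\in T\setminus T'$ that joins two components of $G\setminus T$ gives the required contradiction.
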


To that end, we need the following proposition.

\begin{proposition}\label{prop: I and J}
Let $R_{1}=\KK[\mathbf{x}]$ and $R_{2}=\KK[\mathbf{x},\mathbf{y}]$ be two polynomial rings, and let $\pi:R_{2}\rightarrow R_{1}$ be a surjective $\KK$-algebra homomorphism such that $\pi(x)=x$ for all $x\in \mathbf{x}$. Let $I\subset R_{1}$ and $J\subset R_{2}$ be two radical ideals such that their minimal prime decompositions are:
\[I=\cap_{i=1}^{r}Q_{i}, \quad J=\cap_{i=1}^{r}Q_{i}',\]
such that, for each $i\in [r]$, we have $Q_{i}\subseteq Q_{i}'|_{R_{1}}$ and $\pi(Q_{i}')\subseteq Q_{i}$. Then, for any $i\in [r]$, we have $\vn_{Q_{i}'}(J)\leq \vn_{Q_{i}}(I)$. In particular, $\vn(J)\leq \vn(I)$.
\end{proposition}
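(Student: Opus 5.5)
The idea is to transport a witness for $\vn_{Q_i}(I)$ directly to $J$: take a homogeneous $f\in R_1$ of degree $\vn_{Q_i}(I)$ with $(I:f)=Q_i$, view it as an element of $R_2$, and show that $(J:f)=Q_i'$. Since $f$ keeps the same degree in $R_2$, this gives $\vn_{Q_i'}(J)\leq \deg(f)=\vn_{Q_i}(I)$. We assume the gradings are compatible, i.e.\ $R_1\subseteq R_2$ is a graded inclusion, so $f$ stays homogeneous of the same degree.

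The basic tool is the standard formula for colon ideals of radical ideals. Since $I$ is radical with minimal primes $Q_1,\ldots,Q_r$, we have
\[
(I:f)=\bigcap_{k:\, f\notin Q_k} Q_k,
\]
and similarly $(J:f)=\bigcap_{k:\, f\notin Q_k'} Q_k'$. Because the decomposition is irredundant, $(I:f)=Q_i$ holds exactly when $f\notin Q_i$ and $f\in Q_k$ for all $k\neq i$. By the same criterion, it suffices to show that $f\in Q_k'$ for all $k\neq i$ and $f\notin Q_i'$.

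We verify the two conditions using the hypotheses.
\begin{enumerate}
\item For $k\neq i$: since $f\in Q_k\subseteq Q_k'\cap R_1$, we get $f\in Q_k'$.
\item Suppose $f\in Q_i'$. Since $f\in R_1$ and $\pi$ fixes every variable in $\mathbf{x}$, we have $\pi(f)=f$. Hence $f=\pi(f)\in \pi(Q_i')\subseteq Q_i$, which contradicts $f\notin Q_i$.
\end{enumerate}
Therefore $(J:f)=Q_i'$, which proves the first claim.

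For the \emph{in particular} part, recall that the associated primes of a radical ideal are exactly its minimal primes. Hence
\[
\vn(J)=\min_i \vn_{Q_i'}(J)\leq \min_i \vn_{Q_i}(I)=\vn(I).
\]

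No step is genuinely hard. The only delicate point is the use of irredundancy, which is what allows us to read off $(J:f)=Q_i'$ from the pattern of membership of $f$ in the $Q_k'$. This requires the $Q_k'$ to be pairwise distinct minimal primes, which holds because they form the given minimal prime decomposition.
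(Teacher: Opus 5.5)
Your proof is correct and matches the paper's argument: you take a degree-minimal witness $f\in R_1$ with $(I:f)=Q_i$, get $f\in Q_k'$ for $k\neq i$ from $Q_k\subseteq Q_k'\cap R_1$, and rule out $f\in Q_i'$ by applying $\pi$, which fixes $f$. Your colon formula $(I:f)=\bigcap_{k:\,f\notin Q_k}Q_k$, together with irredundancy of the minimal primes, makes explicit the membership criterion that the paper uses without stating it.
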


\begin{proof}
Let $f\in R_{1}$ be a homogeneous polynomial such that $I:f=Q_{i}$ and $\deg(f)=\vn_{Q_{i}}(I)$. We claim that $J: f=Q_{i}'$. To prove this we must show that $f\in \cap_{j\neq i}Q_{j}'$ and $f\notin Q_{i}'$. On the other hand, since $I:f=Q_{i}$, it follows that $f\in \cap_{j\neq i}Q_{j}$ and $f\notin Q_{i}$. Hence, since $Q_{j}\subseteq Q_{j}'|_{R_{1}}$, it follows that $f\in \cap_{j\neq i}Q_{j}'$. Now, suppose for contradiction, that $f\in Q_{i}'$. In this case, applying the map $\pi$, we obtain 
\[f=\pi(f)\in \pi(Q_{i}')\subseteq Q_{i},\]
where for the first equality we use that $\pi|_{R_{1}}=\textup{Id}$. However, this is a contradiction, since by assumption $f\notin Q_{i}$. We then have 
\[\vn_{Q_{i'}}(J)=\min\{\deg(g): I:g=Q_{i}'\}\leq \deg(f)=\vn_{Q_i}(I).\]
It follows that 
\[\vn(J)=\min\{\vn_{Q_{i'}}(J):i\in [r]\}\leq \min\{\vn_{Q_{i}}(I):i\in [r]\}=\vn(I).\]
\end{proof}

We can now prove that the sequence in Equation~\eqref{sequence} is weakly-decreasing.

\begin{theorem}\label{thm: monotonicity}
Let $m_{1}\leq m_{2}$ and let $T$ be a cut set of $G$. Then, we have $\vn_{T}(J_{K_{m_2},G})\leq \vn_{T}(J_{K_{m_1},G})$. In particular, we have
\[\vn_{T}(J_{G})\geq \vn_{T}(J_{K_{3},G})\geq \vn_{T}(J_{K_4, G})\geq \cdots \cdots,\]
which implies 
\[\vn(J_{G})\geq \vn(J_{K_{3},G})\geq \vn(J_{K_4, G})\geq \cdots \cdots\]
\end{theorem}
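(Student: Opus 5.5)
The plan is to apply Proposition~\ref{prop: I and J} with $R_{1}=\KK[x_{i,j}: i\in[m_1], j\in[n]]$, $R_{2}=\KK[x_{i,j}: i\in[m_2], j\in[n]]$, $I=J_{K_{m_1},G}$ and $J=J_{K_{m_2},G}$. We regard $R_1$ as the subring of $R_2$ generated by the first $m_1$ rows, so the extra variables $\mathbf{y}$ are $x_{i,j}$ with $m_1<i\le m_2$. Let $\pi:R_2\to R_1$ be the $\KK$-algebra map fixing $x_{i,j}$ for $i\le m_1$. For the extra rows we set $\pi(x_{i,j})=0$ when $i>m_1$; if surjectivity or the required containments cause trouble, we would instead send row $i$ to row $1$, i.e.\ $x_{i,j}\mapsto x_{1,j}$. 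By Theorem~\ref{thm: decomposition generalized ideals}, both ideals are radical, and their minimal primes are indexed by the same family of cut sets $T$ of $G$, since the cut-set condition depends only on $G$. We therefore pair $Q_T=P_T(K_{m_1},G)$ with $Q_T'=P_T(K_{m_2},G)$.

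Next we check the two hypotheses of Proposition~\ref{prop: I and J} using the explicit generators in \eqref{PT}. For $Q_T\subseteq Q_T'\cap R_1$, note that each generator of $Q_T$ is either a variable $x_{i,j}$ with $j\in T$, $i\le m_1$, or a $2\times2$ minor $x_{i,k}x_{i',\ell}-x_{i,\ell}x_{i',k}$ with $i,i'\le m_1$ and $k,\ell$ in a common component of $G_T$. Each of these is literally a generator of $Q_T'$, so the containment holds.

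For $\pi(Q_T')\subseteq Q_T$, we apply $\pi$ to the generators of $Q_T'$. A variable $x_{i,j}$ with $j\in T$ maps to itself or to $0$ (or to $x_{1,j}$ in the alternative choice), and in each case the image lies in $Q_T$. A minor involving rows $i,i'$ maps to a minor of $Q_T$ when both rows are at most $m_1$, and to $0$ otherwise. With the row-$1$ substitution, a minor in which both rows are collapsed to row $1$ vanishes, and one with a single substituted row becomes a minor in rows $1$ and $i'$; either way the image lies in $Q_T$. Since $\pi$ is a ring homomorphism, the image of the ideal $Q_T'$ is contained in the ideal generated by $\pi$ of its generators, which gives the containment.

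Proposition~\ref{prop: I and J} then yields $\vn_T(J_{K_{m_2},G})\le\vn_T(J_{K_{m_1},G})$. The chain for local v-numbers follows by taking $m_1=m$ and $m_2=m+1$ and using $J_G=J_{K_2,G}$. The global chain then follows either from the ``in particular'' clause of Proposition~\ref{prop: I and J}, or by taking the minimum over all $T$.

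The main technical obstacle is the mismatch in the hypothesis ``$\pi$ surjective with $\pi(x)=x$'' and the statement $\pi(Q'_i)\subseteq Q_i$ in Proposition~\ref{prop: I and J}. Here $\pi$ lands in $R_1$, while $Q_i'$ lives in $R_2$, so this is really a statement about the image of an ideal, and I would interpret it as above. A further point is that $\pi(Q_T')$ need not be an ideal unless $\pi$ is surjective; the zero substitution is indeed surjective, so this is harmless. Finally, I must make sure the indexing of minimal primes by cut sets is identical for every $m\ge2$, which Theorem~\ref{thm: decomposition generalized ideals} provides.
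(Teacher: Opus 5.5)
Your proposal is correct and follows the paper's proof essentially verbatim. The paper also applies Proposition~\ref{prop: I and J} to $J_{K_{m_1},G}$ and $J_{K_{m_2},G}$, pairs $P_T(K_{m_1},G)$ with $P_T(K_{m_2},G)$ via Theorem~\ref{thm: decomposition generalized ideals}, and uses exactly the projection killing rows $m_1+1,\dots,m_2$, checking the two containments on generators as you do (your fallback row-$1$ substitution would also work but is not needed).
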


\begin{proof}
Let $R_{1}:=\KK[x_{i,j}:i\in [m_1],j\in [n]]$ and $R_{2}:=\KK[x_{i,j}:i\in [m_2],j\in [n]]$. It suffices to show that the ideals $J_{K_{m_1},G}$ and $J_{K_{m_2},G}$ satisfy the hypothesis of Proposition~\ref{prop: I and J}. By Theorem~\ref{thm: decomposition generalized ideals} both ideals are radical and have minimal prime decompositions given by:
\[J_{K_{m_1},G}=\bigcap_{T}P_{T}(K_{m_1},G) \quad \text{and} \quad J_{K_{m_2},G}=\bigcap_{T}P_{T}(K_{m_2},G),\]
where $T$ ranges over all cut sets of $G$. From the description of the minimal primes in Equation~\eqref{PT}, it is clear that $P_T(K_{m_1},G)=P_T(K_{m_2},G)|_{R_{1}}$, and moreover, if $\pi: R_{2}\rightarrow R_{1}$ is the surjective $\KK$-algebra homomorphism given by $\pi(x_{i,j})=x_{i,j}$ if $i\leq m_{1}$ and $\pi(x_{i,j})=0$ if $m_1+1\leq i\leq m_2$, one can easily see that $\pi(P_T(K_{m_2},G))=P_T(K_{m_1},G)$. Hence, the hypothesis of Proposition~\ref{prop: I and J} are satisfies, and the claim follows.
\end{proof}

We conclude this section proving that the generalized binomial edge ideals $J_{K_{m},G}$ are coordinate-saturated ideals, and we compute the localized v-number $\vn_{\emptyset}(J_{K_{m},G})$ using the framework developed in~\Cref{framework coordinate saturated}.

\begin{lemma}\label{lem: Jkm is coordinate saturated}
The ideal $\jkm$ is \emph{coordinate-saturated}, and $P_{\emptyset}(\jkm)$ is its unique minimal prime containing no indeterminates.
\end{lemma}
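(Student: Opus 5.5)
We will follow the template of Lemma~\ref{lem: LG is coordinate saturated}: verify radicality and then the two conditions of Proposition~\ref{prop: sufficient conditions for coordinate-saturated}. By Theorem~\ref{thm: decomposition generalized ideals}, $\jkm$ is radical and its minimal primes are the ideals $P_T(K_m,G)$ with $T$ a cut set of $G$. For such a prime $\qq=P_T(K_m,G)$, the indeterminates it contains are exactly $\{x_{i,j}: i\in[m], j\in T\}$. Indeed, each summand $J_{K_m,\widetilde G_k}$ is the ideal of $2\times 2$ minors of a generic $m\times|V(G_k)|$ matrix. This ideal is prime and generated in degree $2$, so it contains no variables. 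Moreover the summands live in disjoint sets of variables, so their sum with the variable ideal contains no further variables. Hence $S(\qq)=[m]\times T$.

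For condition~\ref{s1}, we compute $\pi_{S(\qq)}$ on the generators $x_{i,k}x_{j,\ell}-x_{i,\ell}x_{j,k}$ with $\{k,\ell\}\in E(G)$. If $k,\ell\notin T$, the generator is fixed. If $k\in T$ or $\ell\in T$, each monomial contains a variable from the columns in $T$, so the image is $0$. Thus $\pi_{S(\qq)}(\jkm)\subseteq\jkm$.

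For condition~\ref{s2}, observe that
\[
\jkm+\langle x_{i,j}: i\in[m], j\in T\rangle = J_{K_m,G_T}+\langle x_{i,j}: i\in[m], j\in T\rangle,
\]
where $J_{K_m,G_T}$ lives in the variables of columns outside $T$. Its minimal primes are therefore the sums of $\langle x_{i,j}:j\in T\rangle$ with minimal primes of $J_{K_m,G_T}$. By Theorem~\ref{thm: decomposition generalized ideals} applied to $G_T$, these primes are the $P_{T'}(K_m,G_T)$, and the only one with no indeterminates is $P_\emptyset(K_m,G_T)=\sum_k J_{K_m,\widetilde G_k}$. This uses that $\emptyset$ is always a cut set, and that for $T'\neq\emptyset$ the prime $P_{T'}(K_m,G_T)$ contains the variables of columns in $T'$. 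Hence the uniqueness in~\ref{s2} holds, and Proposition~\ref{prop: sufficient conditions for coordinate-saturated} gives that $\jkm$ is coordinate-saturated.

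Finally, $\emptyset$ is a cut set of $G$, so $P_\emptyset(K_m,G)=\sum_k J_{K_m,\widetilde G_k}$ over the components of $G$ is a minimal prime containing no indeterminates. Every other minimal prime $P_T(K_m,G)$ with $T\neq\emptyset$ contains $x_{1,j}$ for $j\in T$, so $P_\emptyset(K_m,G)$ is the unique such prime. By the discussion after Proposition~\ref{prop: sufficient conditions for coordinate-saturated}, this prime equals $P_\emptyset(\jkm)$. The main point requiring care is the claim that the determinantal summands contain no variables and that the minimal primes of a sum in disjoint variable sets are sums of minimal primes. Both are standard: the ideals are prime, and $\KK$ may be assumed algebraically closed, or one can use the explicit description from \cite{Rauh13}.
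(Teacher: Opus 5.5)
Your proof is correct and follows essentially the same route as the paper: you get radicality and the minimal primes from Rauh's decomposition, identify $S(\qq)=[m]\times T$, check condition~(i) on the generators, and reduce condition~(ii) to the fact that $P_{\emptyset}(K_m,G_T)$ is the unique minimal prime of $J_{K_m,G_T}$ containing no indeterminates. The extra details you supply (why the determinantal summands contain no variables, and why the minimal primes of $J_{K_m,G_T}+\langle x_{i,j}: i\in[m], j\in T\rangle$ are those of $J_{K_m,G_T}$ plus the variable ideal) are fine, and the latter needs no algebraic closedness of $\KK$, since one summand is generated by variables.
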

\begin{proof}
The ideal $\jkm$ is radical by \textup{\cite[Corollary~4]{Rauh13}}, and the uniqueness of $P_{\emptyset}(\jkm)$ follows immediately from Theorem~\ref{thm: decomposition generalized ideals}. It remains to verify conditions~\ref{s1} and~\ref{s2} of Proposition~\ref{prop: sufficient conditions for coordinate-saturated}. Let $\qq := P_{T}(\jkm)$ be a minimal prime, so that $S(\qq) = [m] \times T$. Since
\[
\pi_{S(\qq)}(x_{i,k}x_{j,l} - x_{i,l}x_{j,k}) =
\begin{cases}
x_{i,k}x_{j,l} - x_{i,l}x_{j,k} & \text{if } k, l \notin T, \\
0 & \text{otherwise},
\end{cases}
\]
we have $\pi_{S(\qq)}(I) \subseteq I$, so~\Cref{s1} holds. For~\Cref{s2}, observe that
\[
\jkm + (x_{i,j} : (i,j) \in S(\qq)) = J_{K_{m}, G \setminus T} + (x_{i,j} : i \in [m],\, j \in T).
\]
Since every minimal prime of the right-hand side is a sum of $(x_{i,j} : i\in[m], j\in T)$ with a minimal prime of $J_{K_m, G\setminus T}$, and the latter has a unique minimal prime containing no indeterminates, there is a unique minimal prime of $\jkm + (x_{i,j}:(i,j)\in S(\qq))$ containing none of the indeterminates $\{x_{i,j} : j \notin T\}$. Hence~\Cref{s2} holds.
\end{proof}

\begin{definition}
We define
\[\mathcal{D}_{c}(G):=\{S\subseteq [n]: \text{$S$ is dominant and $G[S]$ is connected}\},
\]
and
\[\gamma_{c}(G):=\min\{\size{S}:S\in \mathcal{D}_{c}(G)\}.\]
\end{definition}

The following result was already established in \textup{\cite[Theorem~3.3]{sz26}}. However, here we present a different proof using the framework developed in~\Cref{framework coordinate saturated}.

\begin{theorem}\label{thm:generalized v-number}
Let $G$ be a simple graph. Then $\vn_{\emptyset}(J_{K_{m},G})=\gamma_c(G).$
\end{theorem}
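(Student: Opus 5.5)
The plan is to apply Theorem~\ref{thm: main theorem coordinate-saturated ideals} to $I=\jkm$. By Lemma~\ref{lem: Jkm is coordinate saturated}, $\jkm$ is coordinate-saturated and $P_{\emptyset}(K_m,G)$ is its unique minimal prime containing no indeterminates. Since $\jkm$ is generated by binomials, the ``in particular'' clause of Theorem~\ref{thm: main theorem coordinate-saturated ideals} gives $\vn_{\emptyset}(\jkm)=\gamma(\jkm)$. That theorem as stated also requires $\KK$ to be algebraically closed. If needed, I would remark that both the primary decomposition in Theorem~\ref{thm: decomposition generalized ideals} and the combinatorial answer are field-independent, so one may pass to $\overline{\KK}$. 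The reason is that colon ideals and membership in ideals commute with the faithfully flat extension $\KK\to\overline{\KK}$, and the minimizing witness will turn out to be a monomial. It then remains to show $\gamma(\jkm)=\gamma_c(G)$.

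By Equation~\eqref{gamma and transversals}, $\gamma(\jkm)$ is the minimum size of a transversal of the hypergraph $\Delta$. The edges of $\Delta$ are $S(P_T)=[m]\times T$, where $T$ ranges over the nonempty cut sets of $G$. A transversal $U\subseteq [m]\times[n]$ meets $[m]\times T$ exactly when its projection $\mathrm{pr}(U)\subseteq[n]$ meets $T$. Thus a minimum transversal may be taken with one entry per column, and $\gamma(\jkm)$ equals the minimum size of a subset $S\subseteq[n]$ meeting every nonempty cut set of $G$. This quantity is visibly independent of $m$, and for $m=2$ it is exactly the combinatorial problem for binomial edge ideals.

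The main step is the combinatorial identity
\[\min\{\size{S}: S\cap T\neq\emptyset \text{ for every nonempty cut set } T\}=\gamma_c(G).\]
The easiest route is to quote the case $m=2$: the binomial edge ideal computation of \cite{jaramillo2024connected} already identifies this transversal number with $\gamma_c(G)$.

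For a direct argument, I would prove both inclusions separately.

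\textbf{Every connected dominating set is a transversal.} Suppose $S\in\mathcal{D}_c(G)$ and $T$ is a nonempty cut set with $S\cap T=\emptyset$. Then $G_T$ is connected, since $G[S]$ is connected and every vertex outside $T$ is adjacent to $S$. But a cut set $T$ requires each $t\in T$ to increase the number of components, $c(T)>c(T\setminus\{t\})$, which is impossible when $c(T)=1$.

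\textbf{Every transversal is a connected dominating set.} Suppose $S$ is not in $\mathcal{D}_c(G)$. Then $G[S]$ has a non-dominated vertex $j$ or is disconnected, so some induced subgraph on $[n]\setminus T'$ containing $S$ is disconnected. Starting from $T'=[n]\setminus S$ (with $j$ removed in the non-dominated case), greedily add back vertices of $T'$ that do not reduce the number of components. What remains, $T\subseteq T'$, is a cut set disjoint from $S$. It is nonempty because $G$ is connected while the component count stays at least $2$. This mirrors the greedy argument in Lemma~\ref{lem: totally non-bipartite}.

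I expect this second direction, the greedy construction of a cut set avoiding a non-connected-dominating set, to be the only delicate point. It requires checking that each remaining $t\in T$ genuinely increases the number of components, which holds because the greedy process stops only when re-adding any remaining vertex would merge components.
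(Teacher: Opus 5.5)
Your proposal is correct and is essentially the paper's proof. Lemma~\ref{lem: Jkm is coordinate saturated} and the binomial case of Theorem~\ref{thm: main theorem coordinate-saturated ideals} reduce the problem to the minimum transversal of the nonempty cut sets. The paper then identifies this with $\gamma_c(G)$ by citing \cite[Lemma~3.7]{liwski2025v}, whereas you prove the identification directly by the greedy argument; you also handle the projection $[m]\times T\to T$ and the algebraic-closure hypothesis more carefully than the paper does.

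One caveat, shared with the paper's statement: when $S=\emptyset$ your greedy start has only one component, and for $G=K_n$ the empty set really is a transversal. This gives $\vn_{\emptyset}(J_{K_m,K_n})=0\neq 1=\gamma_c(K_n)$, so the result needs $G$ connected and non-complete. In that case you can start the greedy from $[n]\setminus\{a,b\}$ for non-adjacent $a,b$.
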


\begin{proof}
By Lemma~\ref{lem: LG2 is coordinate saturated}, $\jkm$ is coordinate-saturated and $P_{\emptyset}(\jkm)$ is its unique minimal prime containing no indeterminates. Then Theorem~\ref{thm: main theorem coordinate-saturated ideals} gives $\vn_{\emptyset}(J_{K_{m},G}) = \gamma(\jkm)$, which by~\Cref{gamma and transversals} equals the minimum size of a transversal of
\[
    \Delta := \{S(\qq) : \qq \in \min(I) \setminus \{\pp\}\} = \{T \subseteq [n] : \text{$T$ is a nonempty cut set of $G$}\},
\]
where the last equality follows from Theorem~\ref{thm: decomposition generalized ideals}. The result now follows from \textup{\cite[Lemma~3.7]{liwski2025v}}, which shows that the set of transversals of $\Delta$ is precisely $\mathcal{D}_{c}(G)$.
\end{proof}

\section{v-number and regularity of powers of binomial edge ideals of closed graphs}\label{closed graphs}
 In this section, we compare $\vn$-number and regularity of binomial edge ideals of closed graphs. In \cite[Theorem 6.5]{sz26}, authors compute $\vn$-number of powers of binomial edge ideals of Cohen-Macaulay closed graphs.  The below proposition is related to any closed graphs.
\begin{proposition}\label{proppowers}
 Let $G$ be a closed graph. Then $\vn(J_G^k) \leq \reg(R/J_G^k)$ for all $k \geq 1$.     
\end{proposition}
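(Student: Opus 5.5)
We would prove the statement by sandwiching both sides between explicit quantities built from the maximal cliques of $G$. A closed graph, after a suitable labeling, has a proper interval structure, and its connected components $G_1,\dots,G_c$ are again closed. So the plan is to first reduce to the connected case and then compare an explicit upper bound for $\vn(J_G^k)$ with a known lower bound for $\reg(R/J_G^k)$.

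\emph{Regularity lower bound.} For a connected closed graph $G$, let $\ell$ be the length of a longest induced path of $G$. We will use the known formula for powers of binomial edge ideals of closed graphs, which gives
\[
\reg(R/J_G^k)=2(k-1)+\ell.
\]
For several components, the regularity of powers of a sum of ideals in disjoint variables is governed by the standard formulas for powers of sums. In particular,
\[
\reg(R/J_G^k)\geq 2(k-1)+\sum_{i}\ell_i,
\]
where $\ell_i$ is the longest induced path length of $G_i$. If only the equality for the connected case is available, we will prove this inequality through the usual lower bound for sums of ideals in disjoint sets of variables.

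\emph{v-number upper bound.} The associated primes of $J_G^k$ for closed $G$ are known to coincide with the minimal primes of $J_G$. This holds because closed graphs have quadratic Gröbner bases and the powers $J_G^k$ have no embedded primes, a result due to Ene--Herzog--Hibi type arguments. We plan to choose a prime $P_T(G)$, namely the one with $T=\emptyset$ (so $P_\emptyset=J_{\widetilde G}$), and build an explicit element. Take $f_0$ realizing $\vn_\emptyset(J_G)=\gamma_c(G)$, as in Theorem~\ref{thm:generalized v-number} with $m=2$. In the connected case, $\gamma_c(G)\leq \ell-1$ for closed graphs, since the internal vertices of a longest induced path dominate $G$ and form a connected set. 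Then take $g$ to be a product of $k-1$ generators $f_{ij}$ of $J_{K_n}$ lying in the same clique, and set $f=f_0\cdot g$. We must check that
\[
(J_G^k : f_0 g)=P_\emptyset.
\]
The idea is that multiplying by $k-1$ generators shifts from $J_G$ to $J_G^k$, using $J_G^k : J_G^{k-1}$-type identities, which are valid because $J_G$ has linear powers or a Gröbner basis compatible with powers for closed graphs. This gives
\[
\vn(J_G^k)\leq 2(k-1)+\gamma_c(G)\leq 2(k-1)+\ell-1\leq \reg(R/J_G^k).
\]
For disconnected $G$, we apply the same construction with $\gamma_c$ summed over the components, or we use the $\vn$-number bound for the minimal prime where each component is completed.

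\emph{Main obstacle.} The key difficulty is the colon computation $(J_G^k:f_0g)=P_\emptyset$. It requires controlling $J_G^k:g$, that is, showing that
\[
J_G^k : \bigl(f_{i_1j_1}\cdots f_{i_{k-1}j_{k-1}}\bigr)=J_G .
\]
Our first attempt will use the Gröbner basis of $J_G^k$ (products of quadrics form a Gröbner basis for closed graphs) together with a degree and initial-term argument. Alternatively, we can localize at $P_\emptyset$, where $J_G$ becomes the complete-graph determinantal ideal, whose powers are primary, and then use the known $\vn$-numbers of powers of determinantal ideals of $2\times n$ generic matrices.
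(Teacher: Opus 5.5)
Your strategy is viable, and it is genuinely different from the paper's. The paper argues entirely through results tied to closedness:
\begin{itemize}
\item strong persistence of $J_G$ gives $\vn(J_G^k)\le \vn(J_G)+2(k-1)$;
\item then $\vn(J_G)\le\vn(\inn(J_G))\le \mathrm{im}(H)$, where $\inn(J_G)=I(H)$ is the edge ideal of a bipartite graph;
\item finally $2k+\mathrm{im}(H)-1\le \reg(I(H)^k)$, combined with $\reg(R/J_G^k)=\reg(R/\inn(J_G)^k)$ from Ene--Rinaldo--Terai.
\end{itemize}
No primes of $J_G$ and no graph combinatorics appear. You instead work at the single prime $P_\emptyset$, bound $\vn(J_G^k)\le \gamma_c(G)+2(k-1)$ with an explicit witness, and compare this with the formula $\reg(R/J_G^k)=2(k-1)+\ell$.

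That costs you the combinatorial lemma $\gamma_c(G)\le \ell-1$, which you only assert. It is true, but you need to prove it. In a labeling whose maximal cliques are intervals:
\begin{itemize}
\item every induced path $p_0<\cdots<p_\ell$ is monotone;
\item each vertex strictly between two consecutive path vertices is adjacent to both;
\item a vertex below $p_0$ not adjacent to $p_1$ would let you lengthen the path by one.
\end{itemize}
You should also treat $K_n$ ($\ell=1$, $f_0=1$) separately. In exchange you get the slightly sharper bound $\vn(J_G^k)\le \reg(R/J_G^k)-1$.

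The step you call the main obstacle is easier than you think, and you should not route it through $J_G^k:g=J_G$. That identity is more than you need, and it can fail. Suppose the chosen edge $\{i,j\}$ lies inside a cut set $T$, e.g.\ $T=\{2,3\}$ in the closed graph with maximal cliques $\{1,2,3\},\{2,3,4\},\{3,4,5\}$. Then $f_{ij}\in P_T^2$, so by the no-embedded-primes fact you quote, $P_T$ drops out of $J_G^k:f_{ij}^{k-1}$. Also, $J_G$ does not have linear powers unless $G$ is complete.

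Argue directly instead:
\begin{itemize}
\item $P_\emptyset f_0 g\subseteq J_G\cdot J_G^{k-1}=J_G^k$, since $J_G:f_0=P_\emptyset$ and $g\in J_G^{k-1}$.
\item Conversely, suppose $hf_0g\in J_G^k\subseteq P_\emptyset^{(k)}$. The $P_\emptyset$-adic order on the regular local ring $R_{P_\emptyset}$ is a valuation, and $f_0\notin P_\emptyset$.
\item Each $f_{ij}$ has order exactly $1$. For fixed $i$, the $f_{im}$ with $m\neq i$ form a regular system of parameters, because $x_i$ is a unit and $x_i f_{jm}=x_j f_{im}-x_m f_{ij}$.
\item Hence $\mathrm{ord}(h)\ge 1$, i.e.\ $h\in P_\emptyset$.
\end{itemize}
This needs neither Gr\"obner bases of powers nor the absence of embedded primes. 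It gives $\vn(J_G^k)\le \vn_{P_\emptyset}(J_G)+2(k-1)$ for every connected graph, so closedness enters your proof only through the regularity formula and the domination lemma.
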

\begin{proof}
By \cite[Corollary 2.14]{vrt22}, we have $J_G$ satisfies the strong persistence property. Then by \cite[Remark 4.7]{kns25}, $\vn(J_G^k) \leq \vn(J_G)+(k-1)2$. Note that by \cite[Corollary 3.17]{ass24} $\vn(J_G) \leq \vn(\inn(J_G))$. Thus, we get that
 $\vn(J_G^k) \leq \vn(\inn(J_G))+(k-1)2$. Since $\inn(J_G)$ is the edge ideal of bipartite graph $H$, denoted by $I(H)$. By using \cite[Theorem 4.5]{ss22}, $\vn(\inn(G))=\vn(I(H)) \leq \text{im}(H)$. Then  $\vn(J_G^k) \leq \text{im}(H)+(k-1)2$. Thus by \cite[Theorem 4.5]{bht15},  $$\vn(J_G^k) \leq \reg(I(H)^k)-1=\reg(R/I(H)^k)=\reg(R/\inn(J_G)^k).$$ Also, by \cite[Equation (3)]{vrt22}, we have $\inn(J_G^k)=\inn(J_G)^k$ for all $k \geq 1$ and by \cite[Theorem 3.1]{vrt22},  $\reg(R/J_G^k)= \reg(R/\inn(J_G^k))$ for all $k \geq 1$. Therefore, we have $\vn(J_G^k) \leq \reg(R/J_G^k)$ for all $k \geq 1$.
\end{proof}

The below example conveys that $\vn(J_G^k)$ may not be equal to $\reg(R/J_G^k)$.
\begin{figure}[h]
\centering
\begin{tikzpicture}[scale=1.2,
every node/.style={circle,draw,inner sep=1.5pt}]

\node (1) at (0,0) {$1$};
\node (2) at (2,0) {$2$};
\node (3) at (1,1.7) {$3$};
\node (4) at (3,1.7) {$4$};
\node (5) at (5,1.7) {$5$};
\node (6) at (7,1.7) {$6$};

\draw (1)--(2);
\draw (2)--(3);
\draw (1)--(3);

\draw (3)--(4);
\draw (4)--(5);
\draw (5)--(6);

\end{tikzpicture}
\caption{The Cohen-Macaulay closed graph $G$.}
\label{figure1}
\end{figure}
\begin{example}
Let $G$ be a Cohen-Macaulay closed graph as in \Cref{figure1}. By \cite[Corollary 5.11]{sz26}, $\vn(J_G)=2$ and by \cite[Theorem 6.5]{sz26},  $\vn(J_G^k)=2k$ and by \cite[Theorem 3.1]{vrt22}, $\reg(R/J_G^k)=2(k+1)$.
\end{example}

\bigskip
\noindent\textbf{\bf Acknowledgement.}  
The first author would like to thank Prof. Volkmar Welker for valuable discussions on Lovász--Saks--Schrijver (LSS) ideals during his research visit to Philipps-Universität Marburg, Germany, and for the warm hospitality extended to him throughout his one-month stay. He also gratefully acknowledges his postdoctoral mentor, Prof. A. V. Jayanthan, for many valuable discussions on binomial edge ideals. The second author was supported by the PhD Fellowship 1126125N and partially funded by the FWO grants G0F5921N (Odysseus), G023721N, and the KU Leuven grant iBOF/23/064.

\bibliographystyle{abbrv}
\bibliography{Reference}

@article{herzog2010binomial,
  title={Binomial edge ideals and conditional independence statements},
  author={Herzog, J{\"u}rgen and Hibi, Takayuki and Hreinsd{\'o}ttir, Freyja and Kahle, Thomas and Rauh, Johannes},
  journal={Advances in Applied Mathematics},
  volume={45},
  number={3},
  pages={317--333},
  year={2010},
  publisher={Elsevier}
}

@article{ohtani2011graphs,
  title={Graphs and ideals generated by some 2-minors},
  author={Ohtani, Masahiro},
  journal={Communications in Algebra{\textregistered}},
  volume={39},
  number={3},
  pages={905--917},
  year={2011},
  publisher={Taylor \& Francis}
}

@article{grisalde2021induced,
  title={Induced matchings and the v-number of graded ideals},
  author={Grisalde, Gonzalo and Reyes, Enrique and Villarreal, Rafael H},
  journal={Mathematics},
  volume={9},
  number={22},
  pages={2860},
  year={2021},
  publisher={MDPI}
}

@article{jaramillo2024connected,
  title={Connected domination in graphs and v-numbers of binomial edge ideals},
  author={Jaramillo-Velez, Delio and Seccia, Lisa},
  journal={Collectanea Mathematica},
  volume={75},
  number={3},
  pages={771--793},
  year={2024},
  publisher={Springer}
}

@article{dey2024v,
  title={On the v-number of binomial edge ideals of some classes of graphs},
  author={Dey, Deblina and Jayanthan, AV and Saha, Kamalesh},
  journal={International Journal of Algebra and Computation},
  pages={1--25},
  year={2024},
  publisher={World Scientific}
}

@article{kst16,
  title={Parity binomial edge ideals},
  author={Kahle, Thomas and Sarmiento, Camilo and Windisch, Tobias},
  journal={Journal of Algebraic Combinatorics},
  volume={44},
  number={1},
  pages={99--117},
  year={2016},
  publisher={Springer}
}

@article {eisenbud1996binomial,
    AUTHOR = {Eisenbud, David and Sturmfels, Bernd},
     TITLE = {Binomial ideals},
   JOURNAL = {Duke Math. J.},
  FJOURNAL = {Duke Mathematical Journal},
    VOLUME = {84},
      YEAR = {1996},
    NUMBER = {1},
     PAGES = {1--45},
      ISSN = {0012-7094,1547-7398},
   MRCLASS = {13P10 (13A30 14M25)},
  MRNUMBER = {1394747},
MRREVIEWER = {P.\ Schenzel},
       DOI = {10.1215/S0012-7094-96-08401-X},
       URL = {https://doi.org/10.1215/S0012-7094-96-08401-X},
}

@article {kns25,
    AUTHOR = {Kumar, Manohar and Nanduri, Ramakrishna and Saha, Kamalesh},
     TITLE = {The slope of the {${\rm v}$}-function and the {W}aldschmidt
              constant},
   JOURNAL = {J. Pure Appl. Algebra},
  FJOURNAL = {Journal of Pure and Applied Algebra},
    VOLUME = {229},
      YEAR = {2025},
    NUMBER = {2},
     PAGES = {Paper No. 107881, 13},
      ISSN = {0022-4049,1873-1376},
   MRCLASS = {13F20 (05E40 13A02 13F55)},
  MRNUMBER = {4853484},
MRREVIEWER = {Th\'ai\ Th\`anh\ Nguy\cftil en},
       DOI = {10.1016/j.jpaa.2025.107881},
       URL = {https://doi.org/10.1016/j.jpaa.2025.107881},
}

@article {vrt22,
    AUTHOR = {Ene, Viviana and Rinaldo, Giancarlo and Terai, Naoki},
     TITLE = {Powers of binomial edge ideals with quadratic {G}r\"obner
              bases},
   JOURNAL = {Nagoya Math. J.},
  FJOURNAL = {Nagoya Mathematical Journal},
    VOLUME = {246},
      YEAR = {2022},
     PAGES = {233--255},
      ISSN = {0027-7630,2152-6842},
   MRCLASS = {13D02 (05E40 13F65 13P10 14M05)},
  MRNUMBER = {4425287},
MRREVIEWER = {Jorge\ Neves},
       DOI = {10.1017/nmj.2021.1},
       URL = {https://doi.org/10.1017/nmj.2021.1},
}

@article {ass24,
    AUTHOR = {Ambhore, Siddhi Balu and Saha, Kamalesh and Sengupta,
              Indranath},
     TITLE = {The v-number of binomial edge ideals},
   JOURNAL = {Acta Math. Vietnam.},
  FJOURNAL = {Acta Mathematica Vietnamica},
    VOLUME = {49},
      YEAR = {2024},
    NUMBER = {4},
     PAGES = {611--628},
      ISSN = {0251-4184,2315-4144},
   MRCLASS = {13F20 (05E40 13F65)},
  MRNUMBER = {4834446},
MRREVIEWER = {Aryampilly\ V.\ Jayanthan},
       DOI = {10.1007/s40306-024-00540-w},
       URL = {https://doi.org/10.1007/s40306-024-00540-w},
}

@article {ss22,
    AUTHOR = {Saha, Kamalesh and Sengupta, Indranath},
     TITLE = {The v-number of monomial ideals},
   JOURNAL = {J. Algebraic Combin.},
  FJOURNAL = {Journal of Algebraic Combinatorics. An International Journal},
    VOLUME = {56},
      YEAR = {2022},
    NUMBER = {3},
     PAGES = {903--927},
      ISSN = {0925-9899,1572-9192},
   MRCLASS = {13F55 (05C70 05E40 13A15 13A70)},
  MRNUMBER = {4491066},
MRREVIEWER = {Somayeh\ Bandari},
       DOI = {10.1007/s10801-022-01137-y},
       URL = {https://doi.org/10.1007/s10801-022-01137-y},
}

@article {bht15,
    AUTHOR = {Beyarslan, Selvi and H\`a, Huy T\`ai and Trung, Tr\^an Nam},
     TITLE = {Regularity of powers of forests and cycles},
   JOURNAL = {J. Algebraic Combin.},
  FJOURNAL = {Journal of Algebraic Combinatorics. An International Journal},
    VOLUME = {42},
      YEAR = {2015},
    NUMBER = {4},
     PAGES = {1077--1095},
      ISSN = {0925-9899,1572-9192},
   MRCLASS = {13F20 (05C25 05C45 13D02)},
  MRNUMBER = {3417259},
MRREVIEWER = {John\ J.\ Watkins},
       DOI = {10.1007/s10801-015-0617-y},
       URL = {https://doi.org/10.1007/s10801-015-0617-y},
}

@article{liwski2025v,
  author  = {Liwski, E.},
  title   = {The {v}-number of binomial edge ideals: minimal cuts and cycle graphs},
  journal = {Collectanea Mathematica},
  year    = {2026},
  doi     = {10.1007/s13348-026-00511-4},
  url     = {https://doi.org/10.1007/s13348-026-00511-4}
}

@article {bms18,
    AUTHOR = {Bolognini, Davide and Macchia, Antonio and Strazzanti,
              Francesco},
     TITLE = {Binomial edge ideals of bipartite graphs},
   JOURNAL = {European J. Combin.},
  FJOURNAL = {European Journal of Combinatorics},
    VOLUME = {70},
      YEAR = {2018},
     PAGES = {1--25},
      ISSN = {0195-6698,1095-9971},
   MRCLASS = {05C25 (13C14 13F20)},
  MRNUMBER = {3779601},
MRREVIEWER = {Ali\ Reza\ Naghipour},
       DOI = {10.1016/j.ejc.2017.11.004},
       URL = {https://doi.org/10.1016/j.ejc.2017.11.004},
}

@article {Rauh13,
    AUTHOR = {Rauh, Johannes},
     TITLE = {Generalized binomial edge ideals},
   JOURNAL = {Adv. in Appl. Math.},
  FJOURNAL = {Advances in Applied Mathematics},
    VOLUME = {50},
      YEAR = {2013},
    NUMBER = {3},
     PAGES = {409--414},
      ISSN = {0196-8858,1090-2074},
   MRCLASS = {13F20 (05C25 13P10)},
  MRNUMBER = {3011436},
MRREVIEWER = {Siamak\ Yassemi},
       DOI = {10.1016/j.aam.2012.08.009},
       URL = {https://doi.org/10.1016/j.aam.2012.08.009},
}

@ARTICLE{JNR,
	AUTHOR = {Jayanthan, A. V. and Narayanan, N. and Raghavendra Rao, B. V.},
     TITLE = {Regularity of binomial edge ideals of certain block graphs},
   JOURNAL = {Proc. Indian Acad. Sci. Math. Sci.},
  FJOURNAL = {Indian Academy of Sciences. Proceedings. Mathematical Sciences},
    VOLUME = {129},
      YEAR = {2019},
    NUMBER = {3},
     PAGES = {Art. 36, 10},
      ISSN = {0253-4142},
   MRCLASS = {13D02 (05E40)},
  MRNUMBER = {3941158},
       DOI = {10.1007/s12044-019-0480-1},
       URL = {https://doi.org/10.1007/s12044-019-0480-1},
}

@article {cw19,
    AUTHOR = {Conca, Aldo and Welker, Volkmar},
     TITLE = {Lov\'asz-{S}aks-{S}chrijver ideals and coordinate sections of
              determinantal varieties},
   JOURNAL = {Algebra Number Theory},
  FJOURNAL = {Algebra \& Number Theory},
    VOLUME = {13},
      YEAR = {2019},
    NUMBER = {2},
     PAGES = {455--484},
      ISSN = {1937-0652,1944-7833},
   MRCLASS = {05E40 (05C62 13C40 13P10)},
  MRNUMBER = {3927052},
MRREVIEWER = {Dumitru\ Ioan\ Stamate},
       DOI = {10.2140/ant.2019.13.455},
       URL = {https://doi.org/10.2140/ant.2019.13.455},
}

@article {hmsw15,
    AUTHOR = {Herzog, J\"urgen and Macchia, Antonio and Saeedi Madani, Sara
              and Welker, Volkmar},
     TITLE = {On the ideal of orthogonal representations of a graph in
              {$\Bbb{R}^2$}},
   JOURNAL = {Adv. in Appl. Math.},
  FJOURNAL = {Advances in Applied Mathematics},
    VOLUME = {71},
      YEAR = {2015},
     PAGES = {146--173},
      ISSN = {0196-8858,1090-2074},
   MRCLASS = {05E40 (05C62 13F20)},
  MRNUMBER = {3406962},
MRREVIEWER = {Hassan\ Haghighi},
       DOI = {10.1016/j.aam.2015.09.009},
       URL = {https://doi.org/10.1016/j.aam.2015.09.009},
}

@misc{sz26,
      title={The {v}-number of generalized binomial edge ideals of some graphs}, 
      author={Yi-Huang Shen and Guangjun Zhu},
      year={arXiv:2603.29516, 2026},
      eprint={2603.29516},
      archivePrefix={arXiv},
      primaryClass={math.AC},
      url={https://arxiv.org/abs/2603.29516}, 
}

@article{zbMATH07352276,
 author = {Kumar, Arvind},
 title = {Regularity of parity binomial edge ideals},
 fjournal = {Proceedings of the American Mathematical Society},
 journal = {Proc. Am. Math. Soc.},
 issn = {0002-9939},
 volume = {149},
 number = {7},
 pages = {2727--2737},
 year = {2021},
 language = {English},
 doi = {10.1090/proc/15434},
 zbMATH = {7352276},
 Zbl = {1464.13011}
}

@article{liwski2025lov,
  title={Lov\'asz--Saks--Schrijver Ideals and the Irreducible Components of the Variety of Orthogonal Representations of a Graph},
  author={Liwski, Emiliano},
  journal={arXiv:2512.22954},
  year={2025}
}

@article{zbMATH05708656,
 author = {Hoa, Le Tuan and Tam, Nguyen Duc},
 title = {On some invariants of a mixed product of ideals},
 fjournal = {Archiv der Mathematik},
 journal = {Arch. Math.},
 issn = {0003-889X},
 volume = {94},
 number = {4},
 pages = {327--337},
 year = {2010},
 language = {English},
 doi = {10.1007/s00013-010-0112-6},
 zbMATH = {5708656},
 Zbl = {1191.13032}
}

@article {ar4,
    AUTHOR = {Kumar, Arvind},
     TITLE = {Lov\'asz-{S}aks-{S}chrijver ideals and parity binomial edge
              ideals of graphs},
   JOURNAL = {European J. Combin.},
  FJOURNAL = {European Journal of Combinatorics},
    VOLUME = {93},
      YEAR = {2021},
     PAGES = {Paper No. 103274, 19},
      ISSN = {0195-6698,1095-9971},
   MRCLASS = {05E40 (13A70)},
  MRNUMBER = {4186617},
       DOI = {10.1016/j.ejc.2020.103274},
       URL = {https://doi.org/10.1016/j.ejc.2020.103274},
}

@article {jks20,
    AUTHOR = {Jayanthan, A. V. and Kumar, Arvind and Sarkar, Rajib},
     TITLE = {Regularity of powers of quadratic sequences with applications
              to binomial ideals},
   JOURNAL = {J. Algebra},
  FJOURNAL = {Journal of Algebra},
    VOLUME = {564},
      YEAR = {2020},
     PAGES = {98--118},
      ISSN = {0021-8693,1090-266X},
   MRCLASS = {13D02 (05E40 13A70 13C13)},
  MRNUMBER = {4137693},
MRREVIEWER = {Jorge\ Neves},
       DOI = {10.1016/j.jalgebra.2020.08.004},
       URL = {https://doi.org/10.1016/j.jalgebra.2020.08.004},
}

@article{nkv26,
  author  = {Marie Amalore Nambi and Neeraj Kumar and Chitra Venugopal},
  title   = {{(Almost) Complete Intersection Lov{\'a}sz--Saks--Schrijver Ideals and the Regularity of Their Powers}},
  journal = {Journal of Algebra and Its Applications},
  volume  = {25},
  number  = {9},
  pages   = {2650093},
  year    = {2026},
  doi     = {10.1142/S0219498826500933},
  url     = {https://doi.org/10.1142/S0219498826500933}
}

@article{bms24,
  title={Asymptotic behaviour and stability index of v-numbers of graded ideals},
  author={Biswas, Prativa and Mandal, Mousumi and Saha, Kamalesh},
  journal={arXiv preprint arXiv:2402.16583},
  year={2024}
}

@article{JARAMILLO2021,
title = {The v-number of edge ideals},
journal = {Journal of Combinatorial Theory, Series A},
volume = {177},
pages = {105310},
year = {2021},
issn = {0097-3165},
doi = {https://doi.org/10.1016/j.jcta.2020.105310},
url = {https://www.sciencedirect.com/science/article/pii/S0097316520301023},
author = {Delio Jaramillo and Rafael H. Villarreal}
}

@article{fs25,
author = {Ficarra, Antonino and Sgroi, Emanuele},
title = {Asymptotic behavior of integer programming and the v-function of a graded filtration},
journal = {Journal of Algebra and Its Applications},
volume = {0},
number = {0},
pages = {2650236},
year = {0},
doi = {10.1142/S0219498826502361},

URL = {https://doi.org/10.1142/S0219498826502361
},
}

@article {cstpv20,
    AUTHOR = {Cooper, Susan M. and Seceleanu, Alexandra and Toh\u{a}neanu, S. O. and Pinto, Maria Vaz and Villarreal, Rafael H.},
     TITLE = {Generalized minimum distance functions and algebraic
              invariants of {G}eramita ideals},
   JOURNAL = {Adv. in Appl. Math.},
  FJOURNAL = {Advances in Applied Mathematics},
    VOLUME = {112},
      YEAR = {2020},
     PAGES = {101940, 34},
      ISSN = {0196-8858,1090-2074},
   MRCLASS = {13P25 (11T71 13C40 14G50 94B27)},
  MRNUMBER = {4011111},
MRREVIEWER = {C\'icero\ Carvalho},
       DOI = {10.1016/j.aam.2019.101940},
       URL = {https://doi.org/10.1016/j.aam.2019.101940},
}

@article {gkr93,
    AUTHOR = {Geramita, Anthony V. and Kreuzer, Martin and Robbiano,
              Lorenzo},
     TITLE = {Cayley-{B}acharach schemes and their canonical modules},
   JOURNAL = {Trans. Amer. Math. Soc.},
  FJOURNAL = {Transactions of the American Mathematical Society},
    VOLUME = {339},
      YEAR = {1993},
    NUMBER = {1},
     PAGES = {163--189},
      ISSN = {0002-9947,1088-6850},
   MRCLASS = {14M05 (13D40)},
  MRNUMBER = {1102886},
MRREVIEWER = {Luca\ Chiantini},
       DOI = {10.2307/2154213},
       URL = {https://doi.org/10.2307/2154213},
}

@article {c24,
    AUTHOR = {Conca, Aldo},
     TITLE = {A note on the {$v$}-invariant},
   JOURNAL = {Proc. Amer. Math. Soc.},
  FJOURNAL = {Proceedings of the American Mathematical Society},
    VOLUME = {152},
      YEAR = {2024},
    NUMBER = {6},
     PAGES = {2349--2351},
      ISSN = {0002-9939,1088-6826},
   MRCLASS = {13A30},
  MRNUMBER = {4741232},
MRREVIEWER = {Tony\ J.\ Puthenpurakal},
       DOI = {10.1090/proc/16767},
       URL = {https://doi.org/10.1090/proc/16767},
}

@misc{jls26,
      title={Private neighbors, perfect codes and their relation with the $\mathtt{v}$-number of closed neighborhood ideals}, 
      author={Delio Jaramillo-Velez and Hiram H. López and Rodrigo San-José},
      year={arxiv:2603.28247, 2026},
      eprint={2603.28247},
      archivePrefix={arXiv},
      primaryClass={math.AC},
      url={https://arxiv.org/abs/2603.28247}, 
}

@article {nk24,
    AUTHOR = {Amalore Nambi, Marie and Kumar, Neeraj},
     TITLE = {Regularity of powers of {$d$}-sequence (parity) binomial edge
              ideals of unicycle graphs},
   JOURNAL = {Comm. Algebra},
  FJOURNAL = {Communications in Algebra},
    VOLUME = {52},
      YEAR = {2024},
    NUMBER = {6},
     PAGES = {2598--2615},
      ISSN = {0092-7872,1532-4125},
   MRCLASS = {13F65 (05E40)},
  MRNUMBER = {4728966},
MRREVIEWER = {Luca\ Amata},
       DOI = {10.1080/00927872.2024.2302101},
       URL = {https://doi.org/10.1080/00927872.2024.2302101},
}

@article {sz23,
    AUTHOR = {Shen, Yi-Huang and Zhu, Guangjun},
     TITLE = {Regularity of powers of (parity) binomial edge ideals},
   JOURNAL = {J. Algebraic Combin.},
  FJOURNAL = {Journal of Algebraic Combinatorics. An International Journal},
    VOLUME = {57},
      YEAR = {2023},
    NUMBER = {1},
     PAGES = {75--100},
      ISSN = {0925-9899,1572-9192},
   MRCLASS = {13F65 (05E40 13C13 13C15 13D02 13F20)},
  MRNUMBER = {4544259},
MRREVIEWER = {Kazunori\ Matsuda},
       DOI = {10.1007/s10801-022-01163-w},
       URL = {https://doi.org/10.1007/s10801-022-01163-w},
}

@misc{cj26,
  author       = {Trung Chau and A. V. Jayanthan},
  title        = {The {v}-numbers of permanental ideals},
  year         = {arXiv:2605.11621, 2026},
  eprint       = {2605.11621},
  archivePrefix= {arXiv},
  primaryClass = {math.AC},
}
\end{document}